\newcommand{\assign}{:=}
\newcommand{\backassign}{=:}
\newcommand{\cdummy}{\cdot}
\newcommand{\mathd}{\mathrm{d}}
\newcommand{\tmdummy}{$\mbox{}$}
\newcommand{\tmem}[1]{{\em #1\/}}
\newcommand{\tmmathbf}[1]{\ensuremath{\boldsymbol{#1}}}
\newcommand{\tmop}[1]{\ensuremath{\operatorname{#1}}}
\newcommand{\R}{\mathbb{R}}
\newcommand{\N}{\mathbb{N}}
\newcommand{\E}{\mathbb{E}}
\renewcommand{\P}{\mathbb{P}}
\newenvironment{enumeratenumeric}{\begin{enumerate}[1.] }{\end{enumerate}}
\newenvironment{enumerateroman}{\begin{enumerate}[i.] }{\end{enumerate}}
\theoremstyle{plain}
\newtheorem{theorem}{Theorem}
\numberwithin{theorem}{section}
\newtheorem{lemma}[theorem]{Lemma}
\newtheorem{proposition}[theorem]{Proposition}
\newtheorem{corollary}[theorem]{Corollary}
\theoremstyle{definition}
\newtheorem{definition}[theorem]{Definition} 
\newtheorem{assumption}{Assumption}
\newtheorem{remark}[theorem]{Remark}
\newtheorem{example}[theorem]{Example}
\title{Energy solutions of singular SPDEs on Hilbert spaces with applications to domains with boundary conditions}
\author{Lukas Gr\"afner\thanks{Depatment of Statistics, University of Warwick, Coventry, CV4 7AL (lukas.grafner@warwick.ac.uk)},
Nicolas Perkowski\thanks{Institut f\"ur Mathematik, Freie Universit\"at Berlin, Arnimallee 7, 14195 Berlin, Germany and\\
Max-Planck-Institute for Mathematics in the Sciences, Leipzig (perkowski@math.fu-berlin.de)},
Shyam Popat\thanks{Centre de Math\'ematiques Appliqu\'ees (CMAP), \'Ecole Polytechnique, 91120 Palaiseau, France (shyam.popat@polytechnique.edu)}}
\date{\today}
\begin{document}

\maketitle

\begin{center}
    \emph{Dedicated to the memory of Giuseppe Da Prato.}
\end{center}

\begin{abstract}
In this paper we extend the theory of energy solutions for singular SPDEs, focusing on equations driven by highly irregular noise with bilinear nonlinearities, including scaling critical examples. By introducing Gelfand triples and leveraging infinite-dimensional analysis in Hilbert spaces together with an integration by parts formula under the invariant measure, we largely eliminate the need for Fourier series and chaos expansions.  This approach broadens the applicability of energy solutions to a wider class of SPDEs, offering a unified treatment of various domains and boundary conditions. Our examples are motivated by recent work on scaling limits of interacting particle systems.

\vspace{10pt}
\noindent \textbf{MSC2020 subject classification:} \hspace{5pt} 60H17, 60H07, 60H50\\
\noindent \textbf{Keywords:}\hspace{5pt} Energy solutions, boundary conditions, Gelfand triple, stochastic Burgers equation
\end{abstract}

\section{Introduction}

The semigroup approach to SPDEs in Hilbert spaces, as developed in the seminal monograph by Da Prato and Zabczyk~\cite{DaPrato2014} is based on infinite-dimensional analysis in Hilbert spaces combined with stochastic analysis and martingale techniques. It offers a unified  approach to a wide range of equations driven by infinite-dimensional Wiener processes. The semigroup approach is notable for its versatility, addressing not only well-posedness but also more advanced aspects including  martingale problems and connections to infinite-dimensional Kolmogorov equations~\cite{DaPrato2002, DaPrato2004}, study of invariant measures and ergodicity~\cite{DaPrato1996}, stability and dynamic properties, control theory, numerical analysis, regularization by noise, reflected equations, or scaling limits. This generality has made the semigroup approach an invaluable tool for researchers tackling both theoretical and applied problems in infinite-dimensional stochastic systems.

However, for certain classes of SPDEs, particularly those involving highly singular noise and nonlinearities, the traditional semigroup approach breaks down. When the noise becomes too irregular, the solution may no longer take values in a space on which the nonlinearity is well-defined. In this case, renormalization becomes essential, and a new conceptual framework is required to handle the resulting singularities. This gap has been filled by the development of singular SPDE theory, as pioneered by Hairer's theory of regularity structures~\cite{Hairer2014} and Gubinelli-Imkeller-Perkowski's paracontrolled calculus~\cite{Gubinelli2015Paracontrolled}, both originating in Lyons's rough path theory~\cite{Lyons1998}. These pathwise techniques introduced the idea of scaling subcriticality, where the nonlinearities become small at small scales. This enables a perturbative expansion of the solution around the linearized equation that gains regularity, in the spirit of previous works by Da Prato and Debussche~\cite{DaPrato2002NavierStokes, DaPrato2003}. Singular SPDEs grew into a vibrant new area of research in SPDEs. But despite the impressive progress in the last decade, the probabilistic structure of singular SPDEs is still worse understood than for SPDEs on Hilbert spaces in the semigroup approach. In particular, the pathwise nature of these methods complicates the study of martingale properties or the relation to infinite-dimensional Kolmogorov equations, and the connection to the broader probabilistic methods used in the semigroup approach is largely absent in the existing theories of singular SPDEs.

This is where energy solutions  become relevant. Energy solutions offer a probabilistic alternative for certain singular SPDEs with a tractable probabilistic structure. They provide a martingale-based theory that is compatible with It\^o calculus. Energy solutions were first defined and constructed by Gon\c{c}alves and Jara~\cite{Goncalves2014} for the stochastic Burgers equation in the context of scaling limits for particle systems. Gubinelli and Jara~\cite{Gubinelli2013} found a slightly stronger formulation and extended the construction to stochastic Navier-Stokes equations and fractional Burgers equations. In both of these works, the question of uniqueness remained open. For the stochastic Burgers equation, Gubinelli and Perkowski~\cite{Gubinelli2018Energy} later proved well-posedness with the help of the Cole-Hopf transform that linearizes the  equation. In a subsequent work~\cite{Gubinelli2020} they extended the well-posedness theory to other equations including the fractional stochastic Burgers equation in the entire subcritical regime, by leveraging a martingale problem approach. Notably, they constructed the infinitesimal generator and solved the infinite-dimensional Kolmogorov backward equation by combining Gaussian analysis with Fourier series computations. More recently, the ongoing Ph.D. thesis of Gr\"afner~\cite{Graefner2024SPDE,Graefner2025} has advanced the theory of energy solutions by adopting a more systematic functional analytic approach that largely removes the need for Fourier series computations. This approach relies on the chaos expansion under a Gaussian or non-Gaussian reference measure, and it is able to handle  scaling critical equations like the critical fractional stochastic Burgers equation and the stochastic surface quasi-geostrophic equation. It also allows for non-stationary perturbations of drift and diffusion, time-dependent coefficients, and it provides a PDE approach (compared to the semigroup approach used here) for the Kolmogorov backward equation, as well as Markov selection results for some scaling super-critical equations.

Our current work strengthens the theory of energy solutions, broadening its scope and enhancing its robustness. We build upon the foundational ideas of SPDEs on Hilbert spaces~\cite{DaPrato2014} and combine them with the Gelfand triples from the variational approach to SPDEs~\cite{Liu2015} to rigorously define the operators involved in the stochastic dynamics, also inspired by the construction of singular operators like the Anderson Hamiltonian in the context of singular SPDEs~\cite{Allez2015}. We consider densely embedded Hilbert spaces $V\subset H \subset V^\ast$ and our focus is on equations of the form
\begin{equation}
  \partial_t u = - A u + {:} B (u, u) {:} + \sqrt{2 A} \xi . \label{eq:SPDE-intro}
\end{equation}
where \(A:V\to V^\ast\) is a symmetric dissipative operator, \(B: V\otimes_s H\to V^\ast \) (the tensor product is defined later)  is a bilinear operator with certain symmetry and antisymmetry properties, ${:}B{:}$ represents renormalization, and \(\xi\) is a space-time white noise on $H$. The candidate invariant measure for this equation is the law of the white noise on $H$, which in infinite dimensions is supported only on a  larger Hilbert space than $H$~\cite{DaPrato2014}. Therefore, some care is needed to interpret the (renormalized) nonlinearity ${:}B{:}$. We show that $B$ can be interpreted as a distribution on $L^2(\mu)$ and we use regularization by noise techniques, in particular infinite-dimensional semigroups/Kolmogorov equations, to prove weak well-posedness of energy solutions to~\eqref{eq:SPDE-intro}. This is in the spirit of works by Da Prato et al on infinite-dimensional regularization by noise~\cite{DaPrato2002Singular, DaPrato2013, DaPrato2015}. A crucial ingredient is the energy estimate of energy solutions, see Definition~\ref{def:energy}, which serves as a selection principle guaranteeing the uniqueness (see also the discussion on the selection principle in~\cite{Graefner2024}).

One of our main contributions is the introduction of Gelfand triples in this context, eliminating the need for Fourier techniques\footnote{Although Fourier methods can still be useful in concrete examples.} while extending the theory to accommodate a wide range of domains, boundary conditions and operator types. Our formulation is motivated by potential applications to the scaling limits of particle systems, inspired in part by recent work on SPDEs with regional fractional Laplacians as scaling limits by Cardoso and Gon\c{c}alves~\cite{Cardoso2024}. On the technical side, we mostly avoid the use of chaos expansions that are prominent in~\cite{Graefner2024SPDE}, and instead rely on an integration by parts rule for the reference measure. This choice not only enhances robustness but also simplifies the necessary estimates.

\subsection*{Statement of the main results}
We consider a Gelfand triple consisting of densely embedded, separable Hilbert
spaces $V \subset H \subset V^{\ast}$ and a dense subspace $S \subset V$ that
plays the role of smooth functions.
For background and further discussion of Gelfand triples, see \cite[Ch.~4.1]{Liu2015}.
We assume that $\lVert \cdummy \rVert_V \geqslant
\lVert \cdummy \rVert_H$ and we define the (semi-)norm
\[ \| h \|_{\dot{V}}^2 \assign \| h \|_V^2 - \| h \|_H^2 \ni [0, \| h
   \|_V^2], \qquad h \in V, \]
which represents a ``homogeneous norm''. The dual norm is canonically defined as
\[ \| h \|_{\dot{V}^{\ast}} \assign \sup \{ \langle h, g \rangle_{V^{\ast}, V}
   : g \in V, \| g \|_{\dot{V}} = 1 \}, \qquad h \in V^{\ast} . \]
Although it is possible to complete $V$ and $V^{\ast}$ under the (semi-)norms
$\lVert \cdot \rVert_{\dot{V}}$ and $\lVert \cdot
\rVert_{\dot{V}^{\ast}}$, respectively, to obtain Hilbert spaces $\dot{V}
\supset V$ and $\dot{V}^{\ast} \subset V^{\ast}$, this will not be necessary
for our purposes and we will rely only on the definitions of the norms.

\begin{assumption}\label{ass:A}
    Let $A : V \rightarrow V^{\ast}$ be a bounded linear operator satisfying for
all $f, g \in S$
\begin{equation} \langle A f, g \rangle_{V^{\ast}, V} = \langle f, A g \rangle_{V, V^{\ast}}
   . \end{equation}
Additionally, we assume that $A f \in H$ for all $f \in S$, meaning that
smooth functions are mapped to $H$, and we assume that there exist $c, C > 0$
such that for all $f, g \in S$
\begin{equation} | \langle A f, g \rangle_{V^{\ast}, V} | \leqslant C \| f \|_{\dot{V}} \| g
   \|_{\dot{V}}, \end{equation}
and
\begin{equation} \langle A f, f \rangle_{V^{\ast}, V} \geqslant c \| f \|_{\dot{V}}^2 . \end{equation}
By approximation, these properties extend to $f, g \in V$.
\end{assumption}

\begin{assumption}\label{ass:B}
    Let $B : V
\times V \rightarrow V^{\ast}$ be a continuous symmetric bilinear operator
with
\begin{equation} | \langle B (f, g), h \rangle_{V^{\ast}, V} | \lesssim (\| f \|_V \| g \|_H
   + \| f \|_H \| g \|_V) \| h \|_{\dot{V}}, \qquad f, g, h \in S, \end{equation}
and such that
\begin{equation} \langle B (f, f), f \rangle_{V^{\ast}, V} = 0, \qquad f \in S. \end{equation}
Again, these properties extend to $f, g, h \in V$ by approximation. We also
assume that $B$ has a continuous extension to $V \otimes_s H$, see
Definition~\ref{def:V-tensor-H}, satisfying
\begin{equation}
    \| B (\varphi)\|_{\dot{V}^{\ast}} \lesssim \| \varphi \|_{V \otimes_s H}.
\end{equation}
Moreover, we assume that there exist $\alpha>0$ and sequences $(\rho_N)_{N \in \mathbb N} \subset L(H,V)$ and $\varepsilon_N \to 0$, such that for all $f \in  S$ and $\varphi \in  S \otimes_s  S$,
  \begin{align}
    \sup_N \left\{\| \rho_N \|_{L (H, H)} + \| \rho_N \|_{L (\dot{V}, \dot{V})} + \varepsilon_N^\alpha \|\rho_N\|_{L(H,V)}\right\} & < \infty, \\
    \| (\rho_N - \text{Id})f \|_V + \| \langle B \circ \rho_N^{\otimes 2}, f \rangle_{V^{\ast}, V} -
    \langle B, f \rangle_{V^{\ast}, V} \|_{L (V \otimes_s H, \mathbb{R})} &
    \leqslant  C_1 (f) \varepsilon_N,\\
    {\color{red}\| \langle B (\varphi), (\rho_N-\text{Id})\cdummy \rangle_{V^{\ast}, V}  \|_{L (\dot{V}, \mathbb{R})}+ } \| \langle B (\rho_N^{\otimes 2}\varphi), \cdummy \rangle_{V^{\ast}, V} - \langle B
    (\varphi), \cdummy \rangle_{V^{\ast}, V} \|_{L (\dot{V}, \mathbb{R})} &
    \leqslant  C_2 (\varphi) \varepsilon_N,
  \end{align}
  where $C_1 :  S \rightarrow [0, \infty)$ and $C_2 :  S \otimes_s  S \rightarrow
  [0, \infty)$ are such that $\mathbb{E} [C_1 (D F)^2 + C_2 (D^2 F)^2] <
  \infty$ for all cylinder functions $F \in \mathcal{C}$ (to be defined below, and the expectation is with respect to the white noise on $H$).
\end{assumption}

\begin{remark}
    The assumptions on \( A \) and \( B \) are natural, while the assumption on \( (\rho_N) \) is more technical. It may be possible to avoid this assumption by finding different arguments. However, in concrete examples, there often exists a natural family of approximations \( (\rho_N) \) for which the estimates in Assumption~\ref{ass:B} hold. For instance, in the derivation of energy solutions from particle systems, \( \rho_N \) is frequently defined by integrating against a rescaled indicator function over a small set, see~\cite{Goncalves2014, Goncalves2020, Cardoso2024}.
\end{remark}

Our aim is to prove uniqueness and, under additional assumptions, existence of energy solutions to the equation
\begin{equation}
  \partial_t u = - A u + {:}B (u, u){:} + \sqrt{2 A} \xi, \label{eqn}
\end{equation}
where ${:}B(u, u){:}$ is a suitable renormalization of
the bilinearity, and $\xi$ is a space-time white noise over $H$, i.e. a
centered Gaussian process indexed by $L^2 (\mathbb{R}_+ ; H)$ with covariance
\[ \mathbb{E} [\xi (g) \xi (h)] = \int_0^{\infty} \langle g (t), h (t) \rangle
   \mathd t. \]
We analyze the equation under the candidate invariant measure $\mu$, the
law of the white noise on $H$. Under $\mu$ the process $(\eta (h))_{h \in H}$
is centered Gaussian with covariance
\[ \int \eta (h) \eta (g) \mu (\mathd \eta) = \langle h, g \rangle_H . \]
Energy solutions are weak (both analytically and probabilistically) solutions that respect the natural probabilistic structure of~\eqref{eqn}; see Definition~\ref{def:energy} for a precise formulation.

\begin{remark}\label{rmk:Hilbert-space}
The measure $\mu$ makes
sense as a Gaussian probability measure on any Hilbert space $W$ such that there is a
dense Hilbert-Schmidt embedding $\iota : H \hookrightarrow W $ since then we can consider the unique Gaussian
measure with trace-class operator $\iota \iota^{\ast}$ on $W$. Such an
embedding always exists and the specific choice of $W$ does not matter since
we deal with a $W$-valued random variable $\eta$  for which the
law of the process $(\langle \eta, \iota (h) \rangle_W)_{h \in H}$ is uniquely determined, see e.g. \cite[Sec.~4.1.2]{DaPrato2014}.
\end{remark}

\begin{theorem}[Existence of energy solutions, Theorem \ref{thm:exisence} below]\label{thm: Existence intro}
  Suppose that $S$ is a nuclear Fr{\'e}chet space and that there exists a
  complete orthonormal system $(e_n)_{n \in \mathbb{N}} \subset V$ of $H$
  consisting of eigenvectors for $A$ with eigenvalues $(\lambda_n)_{n \in
  \mathbb{N}}$.
  Let $\nu \ll \mu$ be such that $\frac{\mathd\nu}{\mathd\mu} \in L^2(\mu)$.
  Then under Assumptions \ref{ass:A} and \ref{ass:B}, there exists an energy solution $u$ to~\eqref{eqn} with $u_0 \sim \nu$.
\end{theorem}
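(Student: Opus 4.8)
The plan is to construct the energy solution as a limit of finite-dimensional approximations, exploiting both the spectral structure provided by the eigenbasis $(e_n)$ and the regularizers $(\rho_N)$ from Assumption~\ref{ass:B}. First I would introduce the spectral projection $\Pi_N$ onto $\operatorname{span}(e_1,\dots,e_N)$ and consider the regularized, truncated SDE
\[ \mathd u^N = \bigl(-A u^N + \Pi_N B(\rho_N u^N, \rho_N u^N)\bigr)\,\mathd t + \sqrt{2A}\,\mathd W^N, \]
where $W^N$ is the projected cylindrical noise. On the finite-dimensional range of $\Pi_N$ the operator $A$ is diagonal with eigenvalues $\lambda_n$, the linear part is an Ornstein--Uhlenbeck process whose invariant law is the projection $\mu_N$ of $\mu$, and the regularized drift $u\mapsto \Pi_N B(\rho_N u,\rho_N u)$ is smooth, so the SDE is well posed. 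The antisymmetry relation $\langle B(f,f),f\rangle_{V^\ast,V}=0$ is exactly what makes the nonlinear drift divergence-free and orthogonal to the score of $\mu_N$, so that $\mu_N$ remains invariant for the full approximate dynamics; starting from the projection of $\nu$ and carrying the density $\mathd\nu/\mathd\mu$ through the truncation, I obtain approximations whose one-time marginals are absolutely continuous with respect to $\mu_N$ with $L^2$ densities bounded by $\|\mathd\nu/\mathd\mu\|_{L^2(\mu)}$.

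The core analytic input is a uniform energy (Kipnis--Varadhan / It\^o-trick) estimate. Using stationarity under $\mu_N$ together with the spectral gap of the OU generator, additive functionals $\int_0^t F(u^N_s)\,\mathd s$ are controlled by an $H^{-1}$-type norm of $F$ measured through the Dirichlet form; feeding in the bounds of Assumption~\ref{ass:B}, in particular $\|B(\varphi)\|_{\dot V^\ast}\lesssim\|\varphi\|_{V\otimes_s H}$, yields a bound on the time integral of the nonlinear term that is uniform in $N$, and the density bound transfers it from $\mu_N$ to the $\nu$-started process by Cauchy--Schwarz. Combined with the martingale structure and the dissipativity of $A$, these estimates give the compactness needed for tightness. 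Since $\mathcal{S}$ is a nuclear Fr\'echet space, I would invoke Mitoma's criterion: tightness of the real-valued processes $\langle u^N,\phi\rangle$ for each $\phi\in\mathcal{S}$ (which follows from the uniform bounds and an Aldous-type estimate) upgrades to tightness of the laws of $u^N$ in $C([0,T];\mathcal{S}')$.

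Having tightness, I would pass to a subsequential limit via Prokhorov and realize it almost surely on one probability space by Skorokhod representation, then identify the limit $u$ as an energy solution in the sense of Definition~\ref{def:energy}. The linear drift and the martingale (with its quadratic variation $2\langle A\phi,\phi\rangle$) pass to the limit routinely, and lower semicontinuity of the energy norm along the approximation transfers the energy estimate to $u$. The delicate point is the nonlinear term: one must show that the regularized additive functionals $\int_0^t \Pi_N B(\rho_N u^N,\rho_N u^N)\,\mathd s$ converge to a well-defined renormalized object $\int_0^t {:}B(u_s,u_s){:}\,\mathd s$. This is handled by the quantitative approximation estimates of Assumption~\ref{ass:B}: the bounds involving $C_1(DF)$, $C_2(D^2F)$ and $\varepsilon_N\to0$, applied together with the It\^o trick, show that the sequence is Cauchy in $L^2$ uniformly in time and that the discrepancy between $B\circ\rho_N^{\otimes2}$ and $B$ vanishes, so the limit is independent of the regularization and defines the renormalized nonlinearity.

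The main obstacle I anticipate is precisely this last step: because $u$ takes values in the support of $\mu$, where $B(u,u)$ is classically ill-defined, convergence of the nonlinear functional cannot be read off pathwise and must be established as a space-time $L^2(\mu)$ limit, quantitatively balancing the blow-up $\|\rho_N\|_{L(H,V)}\lesssim\varepsilon_N^{-\alpha}$ against the Dirichlet-form gain in the It\^o trick. A secondary difficulty is ensuring that the full defining structure of an energy solution --- including the backward, time-reversed martingale condition that underlies the uniqueness theory --- is stable under the limit; this requires exploiting the reversibility of the approximate stationary dynamics and the invariance of $\mu_N$ established at the outset.
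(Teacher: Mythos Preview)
Your overall strategy---Galerkin approximation via the eigenbasis, invariance of the projected Gaussian, It\^o trick for uniform energy estimates, Mitoma for tightness, then passage to the limit---matches the paper's. However, there is a genuine gap in your choice of approximating SDE. You propose the drift $\Pi_N B(\rho_N u^N,\rho_N u^N)$, mixing the spectral projection $\Pi_N$ with the regularizer $\rho_N$ from Assumption~\ref{ass:B}. For $\mu_N$ to be invariant the nonlinear part of the generator must be antisymmetric in $L^2(\mu_N)$, which via Gaussian integration by parts and Lemma~\ref{lem:B-circular} reduces to $\langle B(\rho_N u,\rho_N u),u\rangle_{V^\ast,V}=0$ for $u\in\Pi_N H$. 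But the hypothesis $\langle B(f,f),f\rangle=0$ only gives $\langle B(\rho_N u,\rho_N u),\rho_N u\rangle=0$, and nothing in Assumption~\ref{ass:B} forces $\rho_N$ to fix $\Pi_N H$ (in the paper's examples $\rho_N$ is a local averaging, not a projection). So your claim that $\mu_N$ is preserved is unjustified, and with it the stationarity underpinning the It\^o trick and all downstream uniform estimates.

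The paper sidesteps this by using \emph{only} the spectral projection in the approximating SDE: the nonlinear drift is ${:}B^N(u,u){:}=\Pi_N{:}B(\Pi_N u,\Pi_N u){:}$, for which the cyclic identity applies directly with $v_i=e_i$ and $\mu_N$ is exactly invariant. The regularizers $\rho_N$ enter only at the final identification step, where one must show that $\int_0^\cdot\langle{:}\tilde B^{\rho_M}(u_s,u_s){:},f\rangle\,\mathd s$ converges to the limiting nonlinear additive functional in $p$-variation for some $p<2$, as required by the weak-solution clause of Definition~\ref{def:energy}. This is stronger than the uniform-in-time $L^2$ convergence you describe and needs a separate interpolation argument balancing the energy estimate against the crude bound $\|\rho_M\|_{L(H,V)}\lesssim\varepsilon_M^{-\alpha}$, together with a Besov--variation embedding; your proposal does not address it. Finally, the time-reversed martingale condition you flag as a difficulty is not part of Definition~\ref{def:energy}; it appears only in the sufficient-condition Lemma~\ref{lem:sufficient-energy}, which the existence proof does not pass through.
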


\begin{theorem}[Uniqueness of energy solutions, Theorem \ref{thm:uniqueness} and Remark~\ref{rmk:ergodicity} below]
   Suppose Assumptions \ref{ass:A} and \ref{ass:B} are satisfied. 
   Let $\nu \ll \mu$ with $\frac{\mathd \nu}{\mathd \mu} \in L^2 (\mu)$ and let
  $(u_t)_{t \geqslant 0}$ be an energy solution to~\eqref{eqn} with $u_0 \sim \nu$. 
  Then the law of
  $u$ is uniquely determined by its initial condition, $u$ is a Markov process
  and it has $\mu$ as an invariant measure. If $\|v\|_{\dot{V}}=0$ only for $v=0 \in V$, then $u$ is ergodic.
\end{theorem}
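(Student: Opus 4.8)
\noindent
The plan is to reduce the statement to well-posedness of the martingale problem associated with~\eqref{eqn}, and to establish the latter by solving the corresponding Kolmogorov backward equation in resolvent form. Write the formal generator as $\mathcal{L} = \mathcal{L}_0 + \mathcal{G}$, where $\mathcal{L}_0$ is the Ornstein--Uhlenbeck generator of the linear dynamics $\partial_t u = -Au + \sqrt{2A}\,\xi$ (for which $\mu$ is reversible) and $\mathcal{G}F(\eta) = \langle {:}B(\eta,\eta){:}, DF(\eta)\rangle_{V^\ast,V}$ carries the renormalized nonlinearity. The first step is to record, via the integration-by-parts formula under $\mu$, two structural identities on cylinder functions: $\mathcal{L}_0$ is symmetric and nonpositive on $L^2(\mu)$ with associated Dirichlet form $\mathcal{E}_0(F,F) = \langle -\mathcal{L}_0 F, F\rangle_{L^2(\mu)}$, while $\mathcal{G}$ is antisymmetric, so that $\langle \mathcal{G}F, F\rangle_{L^2(\mu)} = 0$. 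The latter is the probabilistic manifestation of the cancellation $\langle B(f,f),f\rangle_{V^\ast,V} = 0$ from Assumption~\ref{ass:B}; it exhibits $\mathcal{G}$ as a skew perturbation of the symmetric core and keeps $\mu$ stationary.

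The analytic heart of the argument is to solve, for each $\lambda > 0$ and each cylinder function $g$, the resolvent equation $(\lambda - \mathcal{L})\varphi = g$, which I would do variationally. On the Gaussian Sobolev space $\mathcal{D}(\mathcal{E}_0)$ consider
\[
\mathcal{B}_\lambda(F,G) = \lambda\,\langle F, G\rangle_{L^2(\mu)} + \mathcal{E}_0(F,G) - \langle \mathcal{G}F, G\rangle_{L^2(\mu)}.
\]
Continuity of $\mathcal{B}_\lambda$ is the one place the full strength of Assumption~\ref{ass:B} enters: the energy estimate furnishes a bound $|\langle \mathcal{G}F, G\rangle_{L^2(\mu)}| \lesssim \mathcal{E}_0(F)^{1/2}\,\mathcal{E}_0(G)^{1/2}$, controlling the nonlinearity relative to the symmetric part, while coercivity is immediate from antisymmetry since $\mathcal{B}_\lambda(F,F) = \lambda\|F\|_{L^2(\mu)}^2 + \mathcal{E}_0(F,F)$. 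Lax--Milgram then yields a unique weak solution $\varphi = R_\lambda g \in \mathcal{D}(\mathcal{E}_0)$. To make $\mathcal{G}$ and this computation rigorous I would work first with the regularized nonlinearity $\mathcal{G}_N$ built from $B\circ\rho_N^{\otimes 2}$, which acts on cylinder functions, and pass to the limit $N\to\infty$ using the quantitative estimates with rate $\varepsilon_N$ and the integrability $\E[C_1(DF)^2 + C_2(D^2F)^2] < \infty$; this is precisely where the integration-by-parts rule under $\mu$ replaces chaos expansions, both for defining the limit and for controlling the renormalization counterterm uniformly in $N$.

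With the resolvent in hand, I would transfer the information to an arbitrary energy solution $u$ by duality. The energy estimate in Definition~\ref{def:energy} controls the additive functional $\int_0^t \mathcal{G}\varphi(u_s)\,\mathd s$ through a Kipnis--Varadhan/It\^o-trick bound in terms of $\mathcal{E}_0(\varphi)$, which justifies that
\[
M_t^\varphi := \varphi(u_t) - \varphi(u_0) - \int_0^t \mathcal{L}\varphi(u_s)\,\mathd s
\]
is a martingale even though $\varphi$ only lies in $\mathcal{D}(\mathcal{E}_0)$. Applying this to $e^{-\lambda t}\varphi(u_t)$ and using $\mathcal{L}\varphi = \lambda\varphi - g$ collapses the drift, yielding the duality identity $\int_0^\infty e^{-\lambda t}\,\E[g(u_t)]\,\mathd t = \E_\nu[R_\lambda g]$, whose right-hand side depends only on $\nu$, $g$ and $\lambda$. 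Since cylinder functions are dense in $L^2(\mu)$ and the marginals of $u$ are dominated by $\mu$ (via $\nu\ll\mu$ with $L^2$ density), this pins down every one-dimensional time marginal. A regular-conditional-probability argument, showing that an energy solution conditioned on $\mathcal{F}_s$ restarts as an energy solution from the conditional law of $u_s$, upgrades this to the Markov property and to uniqueness of all finite-dimensional distributions, hence of the law. Taking $\nu = \mu$ and using $\langle \mathcal{G}F, 1\rangle_{L^2(\mu)} = 0$ gives $\frac{\mathd}{\mathd t}\E_\mu[g(u_t)] = 0$, so $\mu$ is invariant. Finally, if $\| \cdot \|_{\dot{V}}$ is a genuine norm then $\mathcal{E}_0$ is nondegenerate, so $\mathcal{L}\varphi = 0$ forces $\mathcal{E}_0(\varphi) = 0$ and hence $\varphi$ constant; the only invariant functions are constants, which together with uniqueness yields ergodicity.

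\noindent
I expect the main obstacle to be the analytic construction of the resolvent: rigorously defining $\mathcal{G}$ as a densely defined operator, proving the energy estimate $|\langle \mathcal{G}F, G\rangle_{L^2(\mu)}| \lesssim \mathcal{E}_0(F)^{1/2}\mathcal{E}_0(G)^{1/2}$ (which is sharp, and likely only borderline valid in the scaling-critical examples), and controlling the renormalization limit $\rho_N \to \mathrm{id}$ solely through the integration-by-parts formula under $\mu$. Closely tied to this is the justification of the martingale property for the merely $\mathcal{D}(\mathcal{E}_0)$-regular function $\varphi$, which again rests entirely on this energy estimate.
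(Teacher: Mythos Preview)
Your overall architecture---solve the resolvent equation by Lax--Milgram, then show that every energy solution is a solution to the resulting martingale problem---matches the paper. The ergodicity argument is also correct. However, there is a genuine gap at the analytic heart of the argument.

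The bound you assert, $|\langle \mathcal{G}F, G\rangle_{L^2(\mu)}| \lesssim \mathcal{E}_0(F)^{1/2}\mathcal{E}_0(G)^{1/2}$, does \emph{not} hold. The trilinear structure of $B$ forces one factor in $V$ and another only in $H$, and after integrating by parts under $\mu$ this translates into a loss of one power of the number operator $\mathcal{N}$: the correct estimate (Lemma~\ref{lem:G-bounds}) is $\|\mathcal{G}F\|_{\mathcal{H}^{-1}_{\alpha-1}} \lesssim \|F\|_{\mathcal{H}^1_\alpha}$ for $\alpha \in \{0,1\}$, i.e.\ $|\langle \mathcal{G}F,G\rangle| \lesssim \|F\|_{\mathcal{H}^1_0}\|G\|_{\mathcal{H}^1_1}$, with the extra $(1+\mathcal{N})$ weight on one side. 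Consequently your bilinear form $\mathcal{B}_\lambda$ is \emph{not} continuous on $\mathcal{D}(\mathcal{E}_0)\times\mathcal{D}(\mathcal{E}_0)=\mathcal{H}^1_0\times\mathcal{H}^1_0$, and Lax--Milgram cannot be applied directly. The paper's fix is to truncate in the chaos direction, setting $\mathcal{G}^N = \mathbf{1}_{\mathcal{N}\leqslant N}\mathcal{G}\mathbf{1}_{\mathcal{N}\leqslant N}$, which \emph{is} bounded $\mathcal{H}^1_0\to\mathcal{H}^{-1}_0$ (with $N$-dependent constant). Antisymmetry still gives coercivity uniformly in $N$, so the truncated resolvents $(1-\mathcal{L}_0-\mathcal{G}^N)^{-1}g$ are uniformly bounded in $\mathcal{H}^1_0$ and one passes to a weak limit $\mathcal{R}_1 g$; this is Proposition~\ref{constructioncandidatesemigroup}. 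Note that this truncation is distinct from the approximation $\rho_N$ in state space that you describe.

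A second, related subtlety concerns the step where you promote the martingale property to $\varphi = R_\lambda g$. Since $\mathcal{G}$ is not bounded from $\mathcal{H}^1_0$ to $\mathcal{H}^{-1}_0$, knowing $\varphi\in\mathcal{H}^1_0$ is not enough to make sense of $\int_0^t \mathcal{G}\varphi(u_s)\,\mathd s$ via the energy estimate. The paper instead approximates $\varphi$ by cylinder functions $F^N$ with $F^N\to\varphi$ in $\mathcal{H}^1_0$ \emph{and} $(1-\mathcal{L})F^N\to g$ in $\mathcal{H}^{-1}_0$; the existence of such an approximation is equivalent to the density of $(1-\mathcal{L})\mathcal{C}$ in $\mathcal{H}^{-1}_0$, which is proved in Lemma~\ref{corepropertycyl} using the commutator bound $\|[\mathcal{N},\mathcal{G}]F\|_{\mathcal{H}^{-1}_{\alpha-1}}\lesssim\|F\|_{\mathcal{H}^1_\alpha}$. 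This core property is the missing ingredient in your outline.
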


\begin{remark}[Normalization of coefficients]
  We chose very specific coefficients for convenience, but by a small
  adaptation we can treat the equation
  \[ \partial_t u = - \nu A u + \lambda {:} B (u, u) {:} + \sqrt{\sigma^2 2 A}
     \xi, \]
  for any $\nu, \sigma^2 > 0$ and $\lambda \in \mathbb{R}$. In that case we
  simply redefine $\tilde{A} \assign \nu A$ and $\tilde{B} = \lambda B$ and
  $\tilde{\xi} = \sqrt{\frac{\sigma^2}{\nu}} \xi$ to obtain an equation with
  normalized coefficients $\partial_t u = - \tilde{A} u + : \tilde{B} (u, u) :
  + \sqrt{2 \tilde{A}} \tilde{\xi}$. The noise $\tilde{\xi}$ is no longer a
  space-time white noise over $H$, but over $\tilde{H} = H$ with rescaled norm
  $\| h \|_{\tilde{H}}^2 = \frac{\sigma^2}{\nu} \| h \|_H^2$ and $\| v
  \|_{\tilde{V}}^2 = \frac{\sigma^2}{\nu} \| v \|_V^2$. The operators
  $\tilde{A}$ and $\tilde{B}$ satisfy similar estimates with respect to
  $\tilde{V}$ as $A$ and $B$ do with respect to $V$. Therefore, the same
  existence and uniqueness results hold in that case.
\end{remark}

{\color{red}
\begin{remark}
All results of this paper, in particular the uniqueness statements, extend to the
equation obtained by adding a linear term $Cu$ to the drift in~\eqref{eqn},
where $C:\dot V\to V^\ast$ is bounded, $C(S)\subset H$, and $C$ is
antisymmetric on $S$, i.e.
\[
\langle Cf,g\rangle = -\langle Cg,f\rangle , \qquad f,g\in S
\]
(equivalently $\langle Cf,f\rangle=0$ for all $f\in S$). In the weak formulation~\eqref{eq:weak-formulation} this corresponds to the additional term
\[
\mathrm d u_t(f)=\cdots - u_t(Cf)\,\mathrm dt , \qquad f\in S.
\]
On cylinder functions $F\in\mathcal C$, the corresponding generator in \eqref{eq:generator} becomes
\[
\mathcal{L} = \mathcal{L}_0 + \mathcal{G} + \mathcal{A}_C, \qquad \mathcal{A}_C F := \delta(CDF).
\]
The operator $\mathcal A_C$ is antisymmetric on $\mathcal C$ and extends
to a bounded map $\mathcal{H}^1_\alpha\to \mathcal{H}^{-1}_\alpha$ for all $\alpha \in \mathbb R$. Hence, it is a bounded antisymmetric
perturbation of $\mathcal{L}_0$ and can be handled by a simpler variant of the
arguments used for $\mathcal{G}$.
\end{remark}
}

{\color{red}
\begin{remark}\label{rmk:discretize-A}
    In the existence result Theorem \ref{thm: Existence intro}, the condition that $A$ has an eigenbasis is technical and can likely be omitted. A possible workaround would be to approximate a self-adjoint $A$ satisfying
  Assumption~(A) by the spectral discretizations
  \[
    A^N := \sum_{i=1}^\infty \frac{i}{N}\, \mathbf 1_{\bigl(\frac{i-1}{N},\frac{i}{N}\bigr]}(A),
  \]
  where $\mathbf 1_I(A)$ denotes a spectral projection (functional calculus).
  Then $A^N$ is self-adjoint and $\sigma(A^N)\subset \frac{1}{N}\mathbb N$, hence
  $A^N$ is diagonalizable and Theorem~\ref{thm:exisence} yields an energy solution
  $u^N$ for the equation with $A$ replaced by $A^N$.
  To pass to a limit $N\to\infty$ one would need uniformity of the estimates presented below, in
  particular the tightness bounds of Lemma~\ref{lem:tightness}.
  This is in the spirit of Gubinelli--Turra~\cite{Gubinelli2019}, who approximate
  the Laplacian on $\mathbb R^2$ by Laplacians on tori of increasing size.
\end{remark}
}

\subsection*{Structure of the paper}

The paper is organized as follows. In Section \ref{sec: functional analytic setup} we provide a functional analytic setup for our problem.
Section \ref{sec: Derivation of the generator} is dedicated to deriving expressions for the operators appearing in the It\^o formula for energy solutions, $\mathcal{L}_0$ and $\mathcal{G}$, corresponding to the linear part of the dynamics and the bilinear operator $B$ as in \eqref{eqn} respectively. 
In Section \ref{sec: Function spaces and operator bounds} we derive bounds for the operators $\mathcal{L}_0$ and $\mathcal{G}$ on Sobolev type spaces based on $L^2(\mu)$.
Finally in Section \ref{sec: energy solutions} we define energy solutions in Definition \ref{def:energy} and give an alternative characterization in Theorem \ref{lem:sufficient-energy} that is more applicable in the particle systems setting. Section \ref{sec: Energy solutions and semigroup} is dedicated to the construction of energy solutions and the corresponding semigroups. In Section \ref{sec: Construction of the semigroup and resolvent} we construct the semigroup and resolvent. Energy solutions are constructed in Section \ref{sec: Construction of energy solutions}. Finally, in Section \ref{sec: Uniqueness proof: Duality of semigroup and energy solutions} we prove the uniqueness of energy solutions in Theorem \ref{thm:uniqueness}, by establishing duality between semigroup and energy solution.

The final section is devoted to examples.
For simplicity we focus on the one-dimensional Burgers equation with various
linear operators and boundary conditions. In Section~\ref{sec:examples} we
provide a detailed analysis for the (multi-component) stochastic Burgers
equation on $(0, 1)$ or $(0, \infty)$ with homogeneous Dirichlet boundary
conditions, possibly with a divergence form differential operator instead of
the usual Laplacian. We also discuss homogeneous Neumann boundary conditions,
which we can handle in a hyperviscous setting with $A = (1 - \Delta)^{\theta}$
for $\theta > 1$. In the Dirichlet case we can also choose $- A$ as the
regional fractional Laplacian of order $\gamma \in \left[ \frac{3}{2}, 2
\right)$, which is motivated by recent results on fluctuations in interacting
particle systems by \cite{Cardoso2024}.

\section{Functional analytic setup}\label{sec: functional analytic setup}

\subsection{Derivation of the generator}\label{sec: Derivation of the generator}

In this section we derive useful expressions for the operators appearing in
the It{\^o} formula for energy solutions. We work on the probability space $\Omega=W$ with $W$ as in Remark~\ref{rmk:Hilbert-space}, where we consider the coordinate map $\eta:\Omega\to W$, $\eta(\omega) = \omega$, and $\eta(h)\assign \langle\eta,\mathcal \iota(h)\rangle_W$, $h \in H$ and $\mathcal F = \sigma(\eta(h):h \in H)$. We write $\mu$ for the measure on $(\Omega, \mathcal F)$ under which $\eta$ is a white noise on $H$, see Remark~\ref{rmk:Hilbert-space}. 

\begin{definition}[Cylinder functions]
  Any {\tmem{cylinder function}} is of the form $F (\eta) = \Phi (\eta (f))$,
  where $f \in  S^m$ and $\Phi : \mathbb{R}^m \rightarrow \mathbb{R}$ is a
  polynomial. We write $\mathcal{C}$ for the set of all cylinder functions.
\end{definition}

For a cylinder function $F (\eta) = \Phi (\eta (f))$ we define the operator
\begin{equation}
  \mathcal{L}_0 F (\eta) \assign \sum_{i = 1}^m \partial_i \Phi (\eta (f))
  \eta (- A f_i) + \frac{1}{2} \sum_{i, j = 1}^m \partial_{i j}^2
  \Phi (\eta (f)) 2 \langle A f_i, f_j \rangle_H.
  \label{eq:L0-on-cylinder}
\end{equation}
Formally, this is the differential operator that appears when we apply
It{\^o}'s formula to the (weak formulation of the) linear part of the dynamics~\eqref{eqn}
(i.e. with $B = 0$). To proceed, we need the following well known observation (see \cite{Nualart2006} for the definitions of $D$, $\delta$):

\begin{lemma}[Multiplication formula for Skorokhod integral]
  \label{lem:multiplication-formula}Let $\delta$ be the Skorokhod integral and
  let $D$ be the Malliavin derivative, both with respect to the white noise on
  $H$. Let $F \in \mathcal{C}$ and let $\varphi \in \tmop{dom} (\delta)$. Then
  \begin{equation}
    F \delta (\varphi) = \delta (F \varphi) + \langle D F, \varphi \rangle_H .
    \label{eq:generalized-multiplication}
  \end{equation}
  In particular, we have for any $h \in H$
  \begin{equation}
    F (\eta) \eta (h) = \delta (F h) (\eta) + \langle D F (\eta), h \rangle_H
    = \delta (F h) (\eta) + D_h F (\eta) . \label{eq:multiplication-formula}
  \end{equation}
\end{lemma}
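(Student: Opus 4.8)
The plan is to deduce the general identity from two elementary facts of Malliavin calculus and then read off the special case. The first fact is the duality (adjointness) defining the Skorokhod integral: a process $\varphi$ lies in $\operatorname{dom}(\delta)$ with image $\delta(\varphi)$ precisely when $\mathbb{E}[G\,\delta(\varphi)] = \mathbb{E}[\langle DG, \varphi\rangle_H]$ for every cylinder function $G$, and this requirement determines $\delta(\varphi)$ as an element of $L^2(\mu)$. The second fact is the Leibniz rule $D(FG) = F\,DG + G\,DF$, which is immediate for cylinder functions $F, G$ from the chain rule for the (finite-dimensional) Malliavin derivative. My goal is to show that $F\varphi \in \operatorname{dom}(\delta)$ with $\delta(F\varphi) = F\delta(\varphi) - \langle DF, \varphi\rangle_H$; rearranging this gives exactly \eqref{eq:generalized-multiplication}.

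To verify this, I would test $F\varphi$ against an arbitrary cylinder function $G$ and establish $\mathbb{E}[\langle DG, F\varphi\rangle_H] = \mathbb{E}[G\,(F\delta(\varphi) - \langle DF, \varphi\rangle_H)]$. Starting from the left-hand side, pulling the scalar $F$ inside and applying the Leibniz rule gives
\[
  \langle DG, F\varphi\rangle_H = \langle F\,DG, \varphi\rangle_H = \langle D(FG), \varphi\rangle_H - G\,\langle DF, \varphi\rangle_H .
\]
Taking expectations and applying the duality relation to the cylinder function $FG$ converts the first term into $\mathbb{E}[FG\,\delta(\varphi)]$, so that
\[
  \mathbb{E}[\langle DG, F\varphi\rangle_H] = \mathbb{E}[FG\,\delta(\varphi)] - \mathbb{E}[G\,\langle DF, \varphi\rangle_H] = \mathbb{E}\big[G\,(F\delta(\varphi) - \langle DF, \varphi\rangle_H)\big].
\]
Since this holds for all cylinder $G$, the adjointness characterization yields both $F\varphi \in \operatorname{dom}(\delta)$ and the stated value of $\delta(F\varphi)$.

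The one point needing genuine care is integrability: to invoke the adjointness characterization I must know that $F\varphi \in L^2(\mu; H)$ and that the candidate $F\delta(\varphi) - \langle DF, \varphi\rangle_H$ lies in $L^2(\mu)$, so that the tested functional is $L^2$-continuous in $G$. This is exactly where the cylinder structure is used: $F$ and the coordinates of $DF$ are polynomials in the finitely many Gaussian variables $\eta(f_1), \dots, \eta(f_m)$ and therefore belong to every $L^p(\mu)$ by Gaussian hypercontractivity, so Cauchy–Schwarz and H\"older control the products of $F$ and $DF$ against $\varphi$ and $\delta(\varphi)$. I expect this bookkeeping to be the only real obstacle; the algebraic identity itself is forced once duality and Leibniz are in hand.

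Finally, the special case \eqref{eq:multiplication-formula} follows by taking $\varphi \equiv h$ for a deterministic $h \in H$. Then $D h = 0$, so $\langle DF, \varphi\rangle_H = \langle DF, h\rangle_H = D_h F$, and the Skorokhod integral of a deterministic element coincides with the first-chaos Wiener integral, $\delta(h) = \eta(h)$. Substituting $\varphi = h$ into the general identity gives $F\eta(h) = \delta(Fh) + D_h F$, which is precisely the claimed formula; here all integrability is transparent, since $F\eta(h)$ is again a polynomial in the underlying Gaussian variables.
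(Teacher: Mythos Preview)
Your proof is correct and follows essentially the same approach as the paper: test against an arbitrary cylinder function $G$, use the duality between $D$ and $\delta$ applied to the cylinder function $FG$, and expand $D(FG)$ via the Leibniz rule. The paper organizes the computation starting from $\mathbb{E}[(F\delta(\varphi) - \langle DF,\varphi\rangle_H)G]$ rather than from $\mathbb{E}[\langle DG, F\varphi\rangle_H]$, but the algebra is identical. Your explicit attention to the integrability side conditions (that $F\varphi \in L^2(\mu;H)$ and the candidate for $\delta(F\varphi)$ lies in $L^2(\mu)$) is a welcome addition that the paper leaves implicit; note however that your Cauchy--Schwarz/H\"older argument does not quite close in full generality, since $F \in \bigcap_{p<\infty} L^p$ and $\delta(\varphi)\in L^2$ alone do not force $F\delta(\varphi)\in L^2$---this is why the standard reference (Nualart, Prop.~1.3.3) states the identity under the proviso that the right-hand side is square integrable, a hypothesis that is harmless in every application the paper makes of the lemma.
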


\begin{proof}
  Let $G$ be another cylinder function. Then
  \begin{align*}
    \mathbb{E} [(F \delta (\varphi) - \langle D F, \varphi \rangle_H) G] & = 
    \mathbb{E} [\delta (\varphi) F G] -\mathbb{E} [\langle D F, \varphi
    \rangle_H G]\\
    & =  \mathbb{E} [\langle \varphi G, D F \rangle_H] +\mathbb{E} [\langle
    \varphi F, D G \rangle_H] -\mathbb{E} [\langle D F, \varphi \rangle_H G]\\
    & =  \mathbb{E} [\delta (\varphi F) G] .
  \end{align*}
  Since this is true for all cylinder functions $G$, the claimed identity
  follows.
\end{proof}

This gives a nice expression for $\mathcal{L}_0 F$:

\begin{corollary}
  For any cylinder function $F \in \mathcal{C}$, we have
  \begin{equation}
    \mathcal{L}_0 F = \delta (- A D F) .
  \end{equation}
\end{corollary}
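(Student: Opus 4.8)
The plan is to verify the identity directly on a generic cylinder function $F(\eta) = \Phi(\eta(f))$ with $f = (f_1,\dots,f_m) \in \mathcal{S}^m$, by computing the right-hand side $\delta(-A D F)$ and matching it termwise against the definition~\eqref{eq:L0-on-cylinder} of $\mathcal{L}_0 F$. First I would record the Malliavin derivative of a cylinder function via the chain rule, $D F = \sum_{i = 1}^m \partial_i \Phi(\eta(f)) f_i$. Since $f_i \in \mathcal{S}$, Assumption~\ref{ass:A} gives $A f_i \in H$, so $-A D F = \sum_{i = 1}^m \partial_i \Phi(\eta(f)) (-A f_i)$ is an $H$-valued process whose coefficients are cylinder functions and whose ``directions'' $-A f_i$ lie in $H$. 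This is exactly the form to which Lemma~\ref{lem:multiplication-formula} applies.

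The second step is to apply~\eqref{eq:multiplication-formula} to each summand. Writing $G_i \assign \partial_i \Phi(\eta(f))$ and $h_i \assign -A f_i \in H$, the formula $G_i \eta(h_i) = \delta(G_i h_i) + \langle D G_i, h_i \rangle_H$ rearranges to $\delta(G_i h_i) = G_i \eta(h_i) - \langle D G_i, h_i \rangle_H$. Summing over $i$ and using linearity of $\delta$ gives $\delta(-A D F) = \sum_i G_i \eta(-A f_i) - \sum_i \langle D G_i, -A f_i \rangle_H$. The first sum is precisely the first-order term of $\mathcal{L}_0 F$. For the second sum I would apply the chain rule once more, $D G_i = \sum_j \partial_{i j}^2 \Phi(\eta(f)) f_j$, so that
\[ -\sum_i \langle D G_i, -A f_i \rangle_H = \sum_{i, j} \partial_{i j}^2 \Phi(\eta(f)) \langle f_j, A f_i \rangle_H = \sum_{i, j} \partial_{i j}^2 \Phi(\eta(f)) \langle A f_i, f_j \rangle_H, \]
using only symmetry of the $H$-inner product together with $A f_i \in H$.

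Comparing with~\eqref{eq:L0-on-cylinder}, the factor $\frac{1}{2} \cdot 2 = 1$ in the definition makes the second-order terms coincide, whence $\delta(-A D F) = \mathcal{L}_0 F$. This argument is essentially a bookkeeping exercise and I do not anticipate a genuine obstacle; the only points requiring a little care are (i) checking that $-A D F$ is Skorokhod-integrable so that the formula applies, which holds because it is a finite sum of products of cylinder functions with deterministic elements $-A f_i \in H \subset \tmop{dom}(\delta)$, and (ii) tracking the sign and the symmetrization of the Hessian coefficients $\partial_{i j}^2 \Phi$, which is automatic since they are symmetric in $i, j$. Note in particular that symmetry of $A$ itself is not needed here, only symmetry of $\langle \cdot, \cdot \rangle_H$.
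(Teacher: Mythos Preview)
Your proof is correct and takes essentially the same approach as the paper: both compute $DF = \sum_i \partial_i \Phi(\eta(f)) f_i$, apply $-A$, and use the multiplication formula from Lemma~\ref{lem:multiplication-formula} together with the chain rule $D(\partial_i \Phi(\eta(f))) = \sum_j \partial_{ij}^2 \Phi(\eta(f)) f_j$ to match terms. The only cosmetic difference is that the paper starts from the definition~\eqref{eq:L0-on-cylinder} and rewrites it as $\delta(-ADF)$, whereas you compute $\delta(-ADF)$ and identify it with~\eqref{eq:L0-on-cylinder}.
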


\begin{proof}
  In~\eqref{eq:L0-on-cylinder} we can write
  \( \sum_{i, j = 1}^m \partial_{i j}^2 \Phi (\eta (f)) \langle A f_i, f_j
     \rangle_H = \sum_{i = 1}^m \langle D \partial_i \Phi (\eta (f)), A f_i
     \rangle_H \)
  and therefore
  \begin{align*}
    \mathcal{L}_0 F (\eta) & = \sum_{i = 1}^m \partial_i \Phi (\eta (f))
    \eta (- A f_i) + \sum_{i = 1}^m \langle D \partial_i \Phi (\eta (f)), A
    f_i \rangle_H\\
    & = \delta \left( \sum_{i = 1}^m \partial_i \Phi (\eta (f)) (- A f_i)
    \right) (\eta) = \delta (- A D F) (\eta) .\qedhere
  \end{align*}
\end{proof}

Our next goal is to derive an expression for the generator $\mathcal{G}$
corresponding to the nonlinearity $B$. Here we have to use a suitable
approximation, and even the definition of the approximation takes some work.

\begin{lemma}
  Let $\rho : H \rightarrow V$ be a bounded operator and let $v \in H$. Let
  $(e_k)_k$ be an orthonormal basis of $H$. Then the following limits exist in
  $L^2 (\mu)$ and do not depend on the specific orthonormal basis:
  \begin{equation}
    \langle {:} B^{\rho} (\eta, \eta) {:}, v \rangle_{V^{\ast}, V} \assign \lim_{N
    \rightarrow \infty} \sum_{k, \ell \leqslant N} (\eta (e_k) \eta (e_{\ell})
    - \delta_{k, \ell}) \langle B (\rho e_k, \rho e_{\ell}), \rho v
    \rangle_{V^{\ast}, V} \label{eq:B-rho}
  \end{equation}
  and, if additionally $v \in V$, the same is true for
  \begin{equation}
    \langle {:} \tilde{B}^{\rho} (\eta, \eta) {:}, v \rangle_{V^{\ast}, V} \assign
    \lim_{N \rightarrow \infty} \sum_{k, \ell \leqslant N} (\eta (e_k) \eta
    (e_{\ell}) - \delta_{k, \ell}) \langle B (\rho e_k, \rho e_{\ell}), v
    \rangle_{V^{\ast}, V}.
  \end{equation}
\end{lemma}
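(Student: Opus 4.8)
The plan is to recognise each partial sum as an element of the second homogeneous Wiener chaos and to reduce $L^2(\mu)$-convergence to the square-summability of the coefficient array $c_{k\ell}\assign\langle B(\rho e_k,\rho e_\ell),\rho v\rangle_{V^{\ast},V}$ (and $\tilde c_{k\ell}\assign\langle B(\rho e_k,\rho e_\ell),v\rangle_{V^{\ast},V}$ for the second statement). Writing $S_N\assign\sum_{k,\ell\le N}(\eta(e_k)\eta(e_\ell)-\delta_{k\ell})c_{k\ell}$, the symmetry of $B$ gives $c_{k\ell}=c_{\ell k}$, and a direct Wick/Isserlis computation (equivalently, the Wiener--It\^o isometry on the second chaos) yields $\E[(S_N-S_M)^2]=2\sum_{(k,\ell)\in R_{N,M}}c_{k\ell}^2$, where $R_{N,M}=\{(k,\ell):\max(k,\ell)\le N,\ \max(k,\ell)>M\}$. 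Hence $(S_N)$ is Cauchy in $L^2(\mu)$ if and only if $\sum_{k,\ell}c_{k\ell}^2<\infty$, and once this holds the limit equals $I_2(\Theta)$, the second Wiener--It\^o integral of the symmetric kernel $\Theta\assign\sum_{k,\ell}c_{k\ell}\,e_k\otimes e_\ell\in H\otimes_s H$.

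The heart of the argument, and the main obstacle, is to show $\sum_{k,\ell}c_{k\ell}^2<\infty$. The naive route of bounding $|c_{k\ell}|$ pointwise by Assumption~\ref{ass:B} and summing is too lossy (it need not even converge), so instead I would exploit the operator structure on the Hilbert space $H\otimes_s H$. Since $v\in H$ gives $\rho v\in V$, the functional $L(\varphi)\assign\langle B(\varphi),\rho v\rangle_{V^{\ast},V}$ satisfies $|L(\varphi)|\le\|B(\varphi)\|_{\dot{V}^{\ast}}\|\rho v\|_{\dot{V}}\lesssim\|\varphi\|_{V\otimes_s H}\|\rho v\|_{\dot{V}}$ by Assumption~\ref{ass:B}, so $L\in(V\otimes_s H)^{\ast}$. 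The map $R\assign\rho\otimes_s\rho$ sends $H\otimes_s H$ boundedly into $V\otimes_s V\subset V\otimes_s H$, because $\rho\in L(H,V)$ and $\|\cdot\|_H\le\|\cdot\|_V$ give $\|R\psi\|_{V\otimes_s H}\lesssim\|\psi\|_{H\otimes_s H}$. Consequently $L\circ R$ is a bounded linear functional on the Hilbert space $H\otimes_s H$, and $c_{k\ell}=(L\circ R)(e_k\otimes_s e_\ell)$ are exactly the coefficients of its Riesz representative $\Theta$; by Parseval $\sum_{k,\ell}c_{k\ell}^2=\|L\circ R\|_{(H\otimes_s H)^{\ast}}^2\le\|L\|^2\|R\|^2<\infty$. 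The point worth stressing is that only boundedness of $\rho$ is used, with no Hilbert--Schmidt hypothesis: the required smoothing is already encoded in the mapping property $B:V\otimes_s H\to\dot{V}^{\ast}$, and it is only through the Hilbert-tensor/Riesz step (rather than a pointwise estimate) that the cancellations making the series summable are captured.

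Basis-independence then follows for free: the representative $\Theta$ is defined intrinsically as the element of $H\otimes_s H$ representing the basis-free functional $L\circ R$. For any orthonormal basis the partial sums $S_N$ are $I_2$ of the finite compressions of $\Theta$ along that basis, which converge in $H\otimes_s H$ to $\Theta$; by the Wiener--It\^o isometry the $L^2(\mu)$-limit equals $I_2(\Theta)$ in every case, hence does not depend on the choice of $(e_k)$.

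The second statement is proved identically, replacing $L$ by $L'(\varphi)\assign\langle B(\varphi),v\rangle_{V^{\ast},V}$. The only change is that the test element $v$ is no longer regularised by $\rho$, so one must require $v\in V$ in order that $\|v\|_{\dot{V}}<\infty$ and $L'\in(V\otimes_s H)^{\ast}$; this is precisely the extra hypothesis in the statement. Everything else, including $\sum_{k,\ell}\tilde c_{k\ell}^2=\|L'\circ R\|_{(H\otimes_s H)^{\ast}}^2<\infty$ and the basis-independence of the limit $I_2(\tilde\Theta)$, carries over verbatim.
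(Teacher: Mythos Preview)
Your proof is correct and follows essentially the same route as the paper. Both arguments recognise the partial sums as second-chaos elements and control them via the composition of the bounded map $\rho^{\otimes 2}:H\otimes_s H\to V\otimes_s H$ with the bounded functional $\varphi\mapsto\langle B(\varphi),\rho v\rangle_{V^\ast,V}$ coming from Assumption~\ref{ass:B}. The only difference is presentational: the paper tests against a cylinder function $G$ in the second chaos and integrates the Skorokhod integral by parts twice to reach $\mathbb{E}[\langle B(\rho^{\otimes 2}\Pi_N^{\otimes 2}D^2G),\rho v\rangle_{V^\ast,V}]$, whereas you invoke the Wiener--It\^o isometry directly and phrase the same estimate as a Riesz representation $\sum_{k,\ell}c_{k\ell}^2=\|L\circ R\|_{(H\otimes_s H)^\ast}^2$. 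These are two sides of the same coin (on the second chaos, $D^2 I_2(\psi)=2\psi$ recovers the kernel), and the resulting bounds, Cauchy property, and basis-independence are identical.
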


\begin{proof}
  The term $\sum_{k, \ell \leqslant N} (\eta (e_k) \eta (e_{\ell}) -
  \delta_{k, \ell}) \langle B (\rho e_k, \rho e_{\ell}), \rho v
  \rangle_{V^{\ast}, V}$ is in the second homogeneous chaos, and $\eta (e_k)
  \eta (e_{\ell}) - \delta_{k, \ell} = \delta (e_k \delta (e_{\ell}))$ by
  Lemma~\ref{lem:multiplication-formula}. Consider a cylinder function $G$
  that is also in the second homogeneous chaos (this is sufficient for the
  expectation below, because any other chaos would give an orthogonal
  contribution to our sum). Integrating $\delta$ by parts twice, we obtain
  with $\Pi_N h = \sum_{k \leqslant N} \langle h, e_k \rangle_{V,V^{\ast}} e_k$
  \begin{align*}
    &  \mathbb{E} \left[ \left( \sum_{k, \ell \leqslant N} (\eta (e_k) \eta
    (e_{\ell}) - \delta_{k, \ell}) \langle B (\rho e_k, \rho e_{\ell}), \rho v
    \rangle_{V^{\ast}, V} \right) G \right]\\
    &  = \sum_{k, \ell \leqslant N} \mathbb{E} [D_{e_k} D_{e_{\ell}} G
    \langle B (\rho e_k, \rho e_{\ell}), \rho v \rangle_{V^{\ast}, V}]\\
    &  =\mathbb{E} [\langle B (\rho^{\otimes 2} \Pi_N^{\otimes 2} D^2 G),
    \rho v \rangle_{V^{\ast}, V}]\\
    &  \lesssim \| \rho \|_{L (H, V)} \| \rho \|_{L (H, H)} \mathbb{E} [\|
    \Pi^{\otimes 2}_N D^2 G \|_{H \otimes_s H}^2] \| \rho \|_{L (\dot{V},
    \dot{V})} \| v \|_{\dot{V}}\\
    &  \lesssim \| \rho \|_{L (H, V)} \| \rho \|_{L (H, H)} \| \rho \|_{L
    (\dot{V}, \dot{V})} \mathbb{E} [\| D^2 G \|_{H \otimes_s H}^2] \| v
    \|_{\dot{V}}\\
    &  \simeq \| \rho \|_{L (H, V)} \| \rho \|_{L (H, H)} \| \rho \|_{L
    (\dot{V}, \dot{V})} {\| G \|_{L^2 (\mu)}}  \| v \|_{\dot{V}},
  \end{align*}
  where in the last step we used that the Malliavin derivative is a bounded
  operator when acting on a fixed chaos. This shows that $\left( \sum_{k, \ell
  \leqslant N} (\eta (e_k) \eta (e_{\ell}) - \delta_{k, \ell}) \langle B (\rho
  e_k, \rho e_{\ell}), \rho v \rangle_{V^{\ast}, V} \right)_N$ is uniformly
  bounded in $L^2 (\mu)$, and a similar argument using that $\Pi_N^{\otimes
  2}$ approximates the identity in $H \otimes_s H$ shows that it is a Cauchy
  sequence and its limit does not depend on the specific orthonormal basis
  used for the construction. The same argument works also for ${:}
  \tilde{B}^{\rho} (\eta, \eta) {:}$ in place of ${:} B^{\rho} (\eta, \eta){ :}$.
\end{proof}

\begin{remark}
    For later reference note that we can extract the following bounds from the proof:
    \begin{align}
        \|\langle {:} B^{\rho} (\eta, \eta) {:}, v \rangle_{V^{\ast}, V}\|_{L^2(\mu)} & \lesssim \| \rho \|_{L (H, V)} \| \rho \|_{L (H, H)} \| \rho \|_{L
    (\dot{V}, \dot{V})}  \| v \|_{\dot{V}},\\ \label{eq:G-rho-bound} 
        \|\langle {:} \tilde{B}^{\rho} (\eta, \eta) {:}, v \rangle_{V^{\ast}, V}\|_{L^2(\mu)} & \lesssim \| \rho \|_{L (H, V)} \| \rho \|_{L (H, H)}   \| v \|_{\dot V}.
    \end{align}
\end{remark}

\begin{lemma}\label{lem: definition of G rho}
  Let $\rho : H \rightarrow V$ be a bounded operator. We define for cylinder
  functions $F = \Phi (\eta (f)) \in \mathcal{C}$ with $\langle {:} B^{\rho}
  (\eta, \eta) {:}, f_i \rangle_{V^{\ast}, V}$ as in~\eqref{eq:B-rho}
  \begin{equation}
    \mathcal{G}^{\rho} F (\eta) \assign \sum_{i = 1}^m \partial_i \Phi (\eta
    (f)) \langle {:} B^{\rho} (\eta, \eta) {:}, f_i \rangle_{V^{\ast}, V} .
  \end{equation}
  Then we can decompose
  \begin{equation} \label{eq:G-Gplus-Gminus}
    \mathcal{G}^{\rho} F =\mathcal{G}^{\rho}_+ +\mathcal{G}^{\rho}_-,
  \end{equation}
  where with any orthonormal basis $(e_k)_{k \in \mathbb{N}}$ of $H$,
  \begin{align}
    \mathcal{G}_+^{\rho} F & =  \sum_{k, \ell \in \mathbb{N}} \delta
    (e_{\ell} \delta (e_k \langle B (\rho e_k, \rho e_{\ell}), \rho D F
    \rangle_{V^{\ast}, V})), \\
    \mathcal{G}_-^{\rho} F & =  2 \sum_{k, \ell \in \mathbb{N}} \delta (e_k
    \langle B (\rho e_k, \rho e_{\ell}), \rho D D_{e_{\ell}} F
    \rangle_{V^{\ast}, V}) . 
  \end{align}
\end{lemma}

\begin{proof}
  By definition,
  \begin{equation}
    \mathcal{G}^{\rho} F (\eta) = \sum_{k, \ell \in \mathbb{N}} \sum_{i = 1}^m
    \partial_i \Phi (\eta (f)) \langle B (\rho e_k, \rho e_{\ell}), \rho f_i
    \rangle_{V^{\ast}, V}  (\eta (e_k) \eta (e_{\ell}) - \delta_{k, \ell})
  \end{equation}
  By the multiplication rule for Skorokhod integrals in
  Lemma~\ref{lem:multiplication-formula} $\eta (e_k) \eta (e_{\ell}) -
  \delta_{k, \ell} = \delta (\delta (e_k) e_{\ell}) (\eta)$. Using once more
  multiplication rule in~\eqref{eq:generalized-multiplication} in the form $G
  \delta (\varphi) = \delta (G \varphi) + \langle D G, \varphi \rangle_H =
  \delta (G \varphi) + D_{\varphi} G$, we can thus rewrite $\mathcal{G}^{\rho}
  F$ as
  \begin{align}
    \mathcal{G}^{\rho} F & = \sum_{k, \ell \in \mathbb{N}} \delta (\langle B
    (\rho e_k, \rho e_{\ell}), \rho D F \rangle_{V^{\ast}, V} \delta (e_k)
    e_{\ell}) + \sum_{k, \ell \in \mathbb{N}} \langle D (\langle B (\rho e_k,
    \rho e_{\ell}), \rho D F \rangle_{V^{\ast}, V}), e_{\ell} \delta (e_k)
    \rangle_H \nonumber\\
    & =  \sum_{k, \ell} \delta (e_{\ell} \langle B (\rho e_k, \rho
    e_{\ell}), \rho D F \rangle_{V^{\ast}, V} \delta (e_k)) + \sum_{k, \ell}
    \langle B (\rho e_k, \rho e_{\ell}), \rho D D_{e_{\ell}} F
    \rangle_{V^{\ast}, V} \delta (e_k) \nonumber\\
    & =  \sum_{k, \ell} \delta (e_{\ell} \delta (e_k \langle B (\rho e_k,
    \rho e_{\ell}), \rho D F \rangle_{V^{\ast}, V})) + \sum_{k, \ell} \delta
    (e_{\ell} \langle B (\rho e_k, \rho e_{\ell}), \rho D D_{e_k} F
    \rangle_{V^{\ast}, V})\nonumber\\
      & + \sum_{k, \ell} \delta (e_k \langle B (\rho e_k, \rho e_{\ell}),
    \rho D D_{e_{\ell}} F \rangle_{V^{\ast}, V}) + \sum_{k, \ell} \langle B
    (\rho e_k, \rho e_{\ell}), \rho D D_{e_k} D_{e_{\ell}} F
    \rangle_{V^{\ast}, V} . 
  \end{align}
  Expanding the third order Malliavin derivative in the last term on the right hand side, we obtain
  \begin{align*}
    \sum_{k, \ell} \langle B (\rho e_k, \rho e_{\ell}), \rho D D_{e_k}
    D_{e_{\ell}} F \rangle_{V^{\ast}, V} & = \sum_{i_1, i_2, i_3 = 1}^m
    \partial_{i_1 i_2 i_3}^3 \Phi (\eta (f)) \langle B (\rho f_{i_2}, \rho
    f_{i_3}), \rho f_{i_1} \rangle_{V^{\ast}, V} .
  \end{align*}
  To proceed, we need Lemma~\ref{lem:B-circular} from the appendix, which
  shows that
  \[ \langle B (\rho f_{i_2}, \rho f_{i_3}), \rho f_{i_1} \rangle_{V^{\ast},
     V} + \langle B (\rho f_{i_1}, \rho f_{i_2}), \rho f_{i_3}
     \rangle_{V^{\ast}, V} + \langle B (\rho f_{i_3}, \rho f_{i_1}), \rho
     f_{i_2} \rangle_{V^{\ast}, V} = 0, \]
  and consequently, using the symmetry of $\partial_{i_1 i_2 i_3}^3 \Phi (\eta
  (f))$ in $i_1, i_2, i_3$,
  \begin{equation*}
    \sum_{i_1, i_2, i_3 = 1}^m \partial_{i_1 i_2 i_3}^3 \Phi (\eta (f))
    \langle B (\rho f_{i_2}, \rho f_{i_3}), \rho f_{i_1} \rangle_{V^{\ast}, V}
    = 0.
  \end{equation*}
  We remain with
  \begin{align*}
    \mathcal{G}^{\rho} F & =  \sum_{k, \ell} \delta (e_{\ell} \delta (\langle
    B (\rho e_k, \rho e_{\ell}), \rho D F \rangle_{V^{\ast}, V} e_k)) + 2
    \sum_{k, \ell} \delta (e_k (\langle B (\rho e_k, \rho e_{\ell}), \rho D
    D_{e_{\ell}} F \rangle_{V^{\ast}, V}))\\
    & =  \mathcal{G}^{\rho}_+ F +\mathcal{G}^{\rho}_- F,
  \end{align*}
  where  we used the symmetry of $B$ in the first line.
\end{proof}

The operators $\mathcal{G}_+^{\rho}$ and $\mathcal{G}_-^{\rho}$ are
antisymmetric with respect to each other:

\begin{lemma}
  \label{Gantisymmetric}If $F$ and $G$ are cylinder functions, then
  \begin{equation}
    \mathbb{E} [(\mathcal{G}_+^{\rho} F) G] = -\mathbb{E}
    [F\mathcal{G}^{\rho}_- G] .
  \end{equation}
\end{lemma}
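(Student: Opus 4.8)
The plan is to evaluate both sides explicitly by repeatedly invoking the duality relation $\mathbb{E}[Y \delta(u)] = \mathbb{E}[\langle D Y, u \rangle_H]$ between the Malliavin derivative and the Skorokhod integral, and then to reconcile the two resulting third-order expressions using the cyclic identity from Lemma~\ref{lem:B-circular}. Throughout I write $F = \Phi(\eta(f))$ and $G = \Psi(\eta(g))$ and abbreviate the trilinear form $T(a, b, c) := \langle B(\rho a, \rho b), \rho c \rangle_{V^{\ast}, V}$. Since $F$ and $G$ are cylinder functions, their Malliavin derivatives are finite linear combinations of the fixed test functions $f_i, g_j \in \mathcal{S}$; hence the basis sums over $(e_k)$ below are effectively finite once contracted against $DF$ and $D^2 G$, so the manipulations with $\delta$ and $D$ are rigorous and no convergence issue arises.

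First I would unfold the left-hand side. Writing $\mathcal{G}_+^{\rho} F = \delta(u)$ with $u = \sum_{\ell} e_{\ell}\, \delta\big(\sum_k e_k\, T(e_k, e_{\ell}, DF)\big)$, two successive applications of the duality relation give
\begin{equation*}
  \mathbb{E}[(\mathcal{G}_+^{\rho} F) G] = \sum_{k, \ell} \mathbb{E}\big[ T(e_k, e_{\ell}, DF)\, D_{e_k} D_{e_{\ell}} G \big].
\end{equation*}
Expanding $DF = \sum_i \partial_i \Phi(\eta(f))\, f_i$ and $D^2 G = \sum_{j_1, j_2} \partial^2_{j_1 j_2}\Psi(\eta(g))\, g_{j_1} \otimes g_{j_2}$, and collapsing the basis sums by bilinearity of $B$ (using $\sum_k \langle g_{j_1}, e_k \rangle_H \rho e_k = \rho g_{j_1}$), this becomes
\begin{equation*}
  \mathbb{E}[(\mathcal{G}_+^{\rho} F) G] = \sum_{i, j_1, j_2} \mathbb{E}\big[ \partial_i \Phi(\eta(f))\, \partial^2_{j_1 j_2}\Psi(\eta(g))\, T(g_{j_1}, g_{j_2}, f_i) \big].
\end{equation*}
Applying the same device once to $\mathcal{G}_-^{\rho} G = 2 \sum_{k, \ell} \delta\big(e_k\, T(e_k, e_{\ell}, D D_{e_{\ell}} G)\big)$ yields
\begin{equation*}
  \mathbb{E}[F \mathcal{G}_-^{\rho} G] = 2 \sum_{i, j_1, j_2} \mathbb{E}\big[ \partial_i \Phi(\eta(f))\, \partial^2_{j_1 j_2}\Psi(\eta(g))\, T(f_i, g_{j_1}, g_{j_2}) \big].
\end{equation*}

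Finally I would close the identity with the cyclic relation $T(a, b, c) + T(c, a, b) + T(b, c, a) = 0$ furnished by Lemma~\ref{lem:B-circular}. Taking $a = g_{j_1}$, $b = g_{j_2}$, $c = f_i$ and summing against the symmetric kernel $\partial^2_{j_1 j_2}\Psi$, the two ``shifted'' terms $T(f_i, g_{j_1}, g_{j_2})$ and $T(g_{j_2}, f_i, g_{j_1})$ coincide after summation: the latter equals $T(f_i, g_{j_2}, g_{j_1})$ by the symmetry of $B$, and relabeling $j_1 \leftrightarrow j_2$ under $\partial^2_{j_1 j_2}\Psi$ turns it into the former. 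Hence $\sum T(g_{j_1}, g_{j_2}, f_i)\, \partial^2_{j_1 j_2}\Psi = -2 \sum T(f_i, g_{j_1}, g_{j_2})\, \partial^2_{j_1 j_2}\Psi$, which is exactly $\mathbb{E}[(\mathcal{G}_+^{\rho} F) G] = -\mathbb{E}[F \mathcal{G}_-^{\rho} G]$. I expect the only genuinely delicate point to be the combinatorial bookkeeping of matching the cyclic permutations against the symmetrizations coming from $\partial^2 \Psi$ and the symmetry of $B$; the analytic content (the two integrations by parts and the finiteness of the chaos expansion) is routine for cylinder functions.
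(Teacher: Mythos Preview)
Your proof is correct and follows essentially the same route as the paper: two integrations by parts via the $D$--$\delta$ duality reduce both sides to third-order expressions in $DF$ and $D^2G$, and then Lemma~\ref{lem:B-circular} together with the symmetry of the second Malliavin derivative supplies the factor $-2$. The only cosmetic difference is that you expand in the fixed test functions $f_i, g_j$ whereas the paper keeps the orthonormal basis vectors $e_k, e_\ell, e_m$ throughout; the logic is identical.
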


\begin{proof}
  We use several times that $\delta$ and $D$ are adjoint operators, and in the
  fifth line we use Lemma~\ref{lem:B-circular} from the appendix together with
  the symmetry of $D_{e_k} D_{e_{\ell}} G$ in $k, \ell$
  \begin{align}
  \mathbb{E} [(\mathcal{G}_+^{\rho} F) G] & = \sum_{k, \ell} \mathbb{E}
  [\delta (e_{\ell} \delta (e_k \langle B (\rho e_k, \rho e_{\ell}), \rho D
  F \rangle_{V^{\ast}, V})) G] \nonumber \\
  & = \sum_{k, \ell} \mathbb{E} [\delta (e_k \langle B (\rho e_k, \rho
  e_{\ell}), \rho D F \rangle_{V^{\ast}, V}) D_{e_{\ell}} G] \nonumber \\
  & = \sum_{k, \ell} \mathbb{E} [\langle B (\rho e_k, \rho e_{\ell}),
  \rho D F \rangle_{V^{\ast}, V} D_{e_k} D_{e_{\ell}} G] \label{eq:Gantisymmetric-pr1} \\
  & = \sum_{k, \ell, m} \mathbb{E} [\langle B (\rho e_k, \rho e_{\ell}),
  \rho e_m \rangle_{V^{\ast}, V} D_{e_m} F D_{e_k} D_{e_{\ell}} G] \nonumber \\
  & = - 2 \sum_{k, \ell, m} \mathbb{E} [\langle B (\rho e_k, \rho e_m),
  \rho e_{\ell} \rangle_{V^{\ast}, V} D_{e_m} F D_{e_k} D_{e_{\ell}} G] \nonumber \\
  & = - 2 \sum_{k, m} \mathbb{E} [\langle B (\rho e_k, \rho e_m), \rho D
  D_{e_k} G \rangle_{V^{\ast}, V} D_{e_m} F] \nonumber \\
  & = - 2 \sum_{k, m \leqslant N} \mathbb{E} [\delta (e_m \langle B (\rho
  e_k, \rho e_m), \rho D D_{e_k} G \rangle_{V^{\ast}, V}) F] \nonumber \\
  & = -\mathbb{E} [F\mathcal{G}_- G] . \nonumber \qedhere
\end{align}
\end{proof}

\begin{remark}
    For later reference, we note that equation~\eqref{eq:Gantisymmetric-pr1} can be written as
    \begin{equation}\label{eq:G-duality-representation}
        \mathbb{E} [(\mathcal{G}_+^{\rho} F) G] = \mathbb{E} [\langle B (\rho^{\otimes 2} D^2G),
  \rho D F \rangle_{V^{\ast}, V}].
    \end{equation}
\end{remark}

To derive estimates for $\mathcal{G^\rho}$ and $\mathcal{L}_0$, we need to
introduce suitable function spaces.

\subsection{Function spaces and operator bounds}\label{sec: Function spaces and operator bounds}

Here we define Sobolev type spaces over $L^2 (\mu)$, which can be interpreted as
tensorizations of $V, H, V^{\ast}$ and also of the homogeneous spaces
$\dot{V}, \dot{V}^{\ast}$. We start with the definition of an auxiliary
operator.

\begin{definition}[Number operator/Ornstein-Uhlenbeck operator]
  The {\tmem{number operator}}, also called {\tmem{Ornstein-Uhlenbeck
  operator}}, is defined as
  \begin{equation}
    \mathcal{N} \assign \delta D, \qquad \tmop{dom} (\mathcal{N}) \assign \{ F
    \in \tmop{dom} (D) : D F \in \tmop{dom} (\delta) \} .
  \end{equation}
\end{definition}

$\mathcal{N}$ is a self-adjoint, positive semi-definite linear operator, see
\cite[p.~59]{Nualart2006}, and therefore we can define the powers $(1
+\mathcal{N})^{\alpha}$ using functional calculus. We have $\mathcal{C}
\subset \tmop{dom} ((1 +\mathcal{N})^{\alpha})$ for any $\alpha \in
\mathbb{R}$, and in fact $(1 +\mathcal{N})^{\alpha} \mathcal{C} \subset
\mathcal{C}$. This can be shown using the chaos expansion under the Gaussian
measure $\mu$, since $\mathcal{N}$ acts as multiplication with $n$ on the
$n$-th homogeneous chaos, and any element of $\mathcal{C}$ has a finite chaos
expansion.

\begin{lemma}
  \label{commutatorL0N}The operators $\mathcal{L}_0$ and $\mathcal{N}$ commute
  on cylinder functions:
  \begin{equation}\label{commutatorestimate1}
    [\mathcal{L}_0, \mathcal{N}] F = \mathcal{L}_0 \mathcal{N} F - \mathcal{N}
    \mathcal{L}_0 F = 0,
  \end{equation}
  for all $F \in \mathcal{C}$.
\end{lemma}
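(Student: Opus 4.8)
The plan is to exploit the Wiener chaos decomposition of $L^2(\mu)$, which the discussion preceding the lemma already identifies as the natural tool: $\mathcal{N}$ acts as multiplication by $n$ on the $n$-th homogeneous chaos $\mathcal{H}_n$. Writing $\pi_n$ for the orthogonal projection onto $\mathcal{H}_n$, we have $\mathcal{N} = \sum_n n\,\pi_n$ on cylinder functions, a finite sum since every $F \in \mathcal{C}$ has a finite chaos expansion. Thus the whole statement reduces to showing that $\mathcal{L}_0$ leaves each chaos invariant, i.e. $\mathcal{L}_0 \mathcal{H}_n \subseteq \mathcal{H}_n$; once this is known, $\mathcal{L}_0$ commutes with every $\pi_n$ and hence with $\mathcal{N}$.

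To establish chaos invariance I would record the grading behaviour of the two building blocks of $\mathcal{L}_0 = \delta(-A D)$ from the corollary. The Malliavin derivative maps $\mathcal{H}_n$ into $\mathcal{H}_{n-1} \otimes H$ (an $H$-valued random variable whose scalar components lie in $\mathcal{H}_{n-1}$), while the Skorokhod integral $\delta$ maps $\mathcal{H}_{n-1}\otimes H$ back into $\mathcal{H}_n$. The key point is that $-A$ enters $\mathcal{L}_0$ only through a deterministic, pointwise action on the $H$-slot of $DF$: for $F = \Phi(\eta(f))$ we have $DF = \sum_i \partial_i\Phi(\eta(f)) f_i$ with $f_i \in \mathcal{S}$, so $Af_i \in H$ by Assumption~\ref{ass:A} and $-ADF = \sum_i \partial_i\Phi(\eta(f))(-Af_i)$ is again an $H$-valued random variable whose scalar coefficients have the same chaos order as those of $DF$. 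Hence $-A$ does not mix chaoses, and the composition $\delta\circ(-A)\circ D$ sends $\mathcal{H}_n$ to $\mathcal{H}_{n-1}\otimes H$ into itself and back to $\mathcal{H}_n$, giving $\mathcal{L}_0\mathcal{H}_n \subseteq \mathcal{H}_n$.

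With chaos invariance in hand the conclusion is immediate: for $F = \sum_n F_n$ with $F_n = \pi_n F \in \mathcal{H}_n$ (finite sum), on one side $\mathcal{N}\mathcal{L}_0 F = \sum_n n\,\mathcal{L}_0 F_n$ because $\mathcal{L}_0 F_n \in \mathcal{H}_n$, while on the other $\mathcal{L}_0 \mathcal{N} F = \mathcal{L}_0 \sum_n n F_n = \sum_n n\,\mathcal{L}_0 F_n$; the two agree, so $[\mathcal{L}_0,\mathcal{N}]F = 0$. The only step requiring genuine care—and the one I would treat as the main obstacle—is the chaos invariance of $\mathcal{L}_0$, specifically the claim that inserting $-A$ between $D$ and $\delta$ preserves the grading. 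This hinges on $A$ acting as a deterministic linear map on the $H$-component, which the representation $\mathcal{L}_0 F = \delta(-ADF)$ makes transparent, together with $Af_i \in H$ for $f_i \in \mathcal{S}$. As an alternative avoiding chaos bookkeeping, one could compute $[\mathcal{L}_0,\mathcal{N}]F$ directly on $F = \Phi(\eta(f))$ via the commutation relation $D_h \delta(u) = \langle h, u\rangle_H + \delta(D_h u)$ for $H$-valued $u$, but this is more calculation-heavy and the chaos argument is cleaner.
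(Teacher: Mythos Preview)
Your proof is correct. The paper's argument is a compact algebraic version of the same idea: it invokes Lemma~\ref{Ncalculus}, namely $D\mathcal{N}=(\mathcal{N}+1)D$ and $\mathcal{N}\delta=\delta(\mathcal{N}+1)$, together with the fact that the deterministic operator $A$ commutes with $\mathcal{N}$ on $H$-valued random variables, to compute in one line
\[
\mathcal{L}_0\mathcal{N}F=-\delta(AD\mathcal{N}F)=-\delta(A(\mathcal{N}+1)DF)=-\delta((\mathcal{N}+1)ADF)=-\mathcal{N}\delta(ADF)=\mathcal{N}\mathcal{L}_0F.
\]
These commutation identities are precisely the algebraic encoding of the grading facts you use (that $D$ lowers and $\delta$ raises the chaos index by one), so the two proofs are equivalent in content. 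Your chaos-invariance phrasing is perhaps more conceptual, while the paper's formulation is shorter and packages the grading once and for all in Lemma~\ref{Ncalculus}, which is then reused elsewhere (e.g.\ in the alternative proof of Lemma~\ref{lem:second-Malliavin}). Amusingly, the ``more calculation-heavy'' alternative you sketch at the end is exactly the route the paper takes.
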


\begin{proof}
  By Lemma \ref{Ncalculus}, we have
  \[
    \mathcal{L}_0 \mathcal{N} F = - \delta (A D \mathcal{N} F) = - \delta (A
    (\mathcal{N} + 1) D \mathcal{} F) = - \delta ((\mathcal{N} + 1) A D
    \mathcal{} F) = - \mathcal{N} \delta (A D \mathcal{} F) \qedhere.
  \]
  \end{proof}

\begin{definition}\label{def: Sobolev type spaces}
  For any cylinder function $F \in \mathcal{C}$ and any $\alpha \in
  \mathbb{R}$ we define
  \begin{align}
    \| F \|_{\dot{\mathcal{H}}^1_{\alpha}}^2 & \assign  \mathbb{E} [\| D (1
    +\mathcal{N})^{\alpha} F \|_{\dot{V}}^2], \\
    \| F \|_{\mathcal{H}^1_{\alpha}}^2 & \assign  \| (1
    +\mathcal{N})^{\alpha} F \|_{L^2 (\mu)}^2 + \| F
    \|_{\dot{\mathcal{H}}^1_{\alpha}}^2, \\
    \| F \|_{\dot{\mathcal{H}}^{- 1}_{- \alpha}}^2 & \assign \sup \{
    \mathbb{E} [F G] : G \in \mathcal{C}, \| G
    \|_{\dot{\mathcal{H}}^1_{\alpha}} = 1 \}, \\
    \| F \|_{\mathcal{H}^{- 1}_{- \alpha}}^2 & \assign \sup \{ \mathbb{E} [F
    G] : G \in \mathcal{C}, \| G \|_{\mathcal{H}^1_{\alpha}} = 1 \} . 
  \end{align}
  We also define the Hilbert spaces $\mathcal{H}^1_{\alpha}$ and
  $\mathcal{H}^{- 1}_{- \alpha}$ as the completion of $(\mathcal{C}, \|
  \cdummy \|_{\mathcal{H}^1_{\alpha}})$ and $(\mathcal{C}, \| \cdummy
  \|_{\mathcal{H}^{- 1}_{- \alpha}})$, respectively.
\end{definition}

The homogeneous spaces $\dot{\mathcal{H}}^1_0$ and $\dot{\mathcal{H}}^{-1}_0$ play an important role when studying long time fluctuations in Markov processes, see the monograph~\cite{Komorowski2012}. Since here we are interested in solvability and not in the long time behavior (beyond ergodicity), we do not need those spaces and only the norms $\lVert \cdot \rVert_{\dot{\mathcal{H}}^1_{\alpha}}^2$ and $\lVert \cdot \rVert_{\dot{\mathcal{H}}^{-1}_{\alpha}}^2$.

\begin{lemma}[Bounds for $\mathcal{L}_0$]
  \label{lem:L0-bounds} The operator $\mathcal L_0$ satisfies the following upper and lower bounds on cylinder functions:
  \begin{equation}
    \| (1 -\mathcal{L}_0) F \|_{\mathcal{H}^{- 1}_{0}} \lesssim \| F
    \|_{\mathcal{H}^1_{0}}, \qquad F \in \mathcal{C},
  \end{equation}
  and
  \begin{equation}
    \langle (1 -\mathcal{L}_0) F, F \rangle_{\mathcal{H}^{- 1}_0,
    \mathcal{H}^1_0} \gtrsim \| F \|_{\mathcal{H}^1_0}^2, \qquad F \in \mathcal C. \label{eq:L0-lower}
  \end{equation}
  Consequently, there is a unique extension of
  $\mathcal{L}_0$ to a bounded linear operator from $\mathcal{H}^1_{0}$
  to $\mathcal{H}^{- 1}_{0}$, which we denote again by $\mathcal{L}_0$,
  and which satisfies the lower bound~\eqref{eq:L0-lower} for all $F \in \mathcal H^1_0$.
\end{lemma}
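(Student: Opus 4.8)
The plan is to reduce both bounds to the adjointness of $\delta$ and $D$ combined with the two-sided control of $A$ from Assumption~\ref{ass:A}. The starting point is the identity $\mathcal{L}_0 F = \delta(-A D F)$. For cylinder functions $F,G$ the Malliavin derivatives are finite sums $D F = \sum_i \partial_i \Phi(\eta(f)) f_i$ with $f_i \in \mathcal{S} \subset V$, so $A D F$ is $H$-valued pathwise (using $A\mathcal{S} \subset H$) and lies in $\mathrm{dom}(\delta)$. Using that $\delta$ and $D$ are adjoint, I would first establish the bilinear identity
\begin{equation*}
  \mathbb{E}[(\mathcal{L}_0 F) G] = -\mathbb{E}[\langle A D F, D G\rangle_H] = -\mathbb{E}[\langle A D F, D G\rangle_{V^{\ast}, V}],
\end{equation*}
where the last equality uses the compatibility of the $H$-inner product with the $V^{\ast},V$ pairing (valid since $A D F \in H$ and $D G \in V$ pathwise). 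This reduces everything to pointwise-in-$\eta$ estimates on $\langle A D F, D G\rangle_{V^{\ast}, V}$, to which the bounds of Assumption~\ref{ass:A} apply directly.

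For the upper bound, I would apply the continuity estimate $|\langle A h, k\rangle_{V^{\ast}, V}| \leqslant C\|h\|_{\dot{V}}\|k\|_{\dot{V}}$ with $h = D F(\eta)$, $k = D G(\eta)$ pathwise, followed by Cauchy--Schwarz in $L^2(\mu)$, to obtain
\begin{equation*}
  |\mathbb{E}[(\mathcal{L}_0 F)G]| \leqslant C\,\mathbb{E}[\|D F\|_{\dot{V}}^2]^{1/2}\,\mathbb{E}[\|D G\|_{\dot{V}}^2]^{1/2} = C\|F\|_{\dot{\mathcal{H}}^1_0}\|G\|_{\dot{\mathcal{H}}^1_0}.
\end{equation*}
Combined with $|\mathbb{E}[FG]| \leqslant \|F\|_{L^2(\mu)}\|G\|_{L^2(\mu)}$ for the identity part, this gives $|\mathbb{E}[(1-\mathcal{L}_0)F \cdot G]| \lesssim \|F\|_{\mathcal{H}^1_0}\|G\|_{\mathcal{H}^1_0}$; taking the supremum over cylinder $G$ with $\|G\|_{\mathcal{H}^1_0} = 1$ and recalling that $\|\cdot\|_{\mathcal{H}^{-1}_0}$ is the corresponding dual norm yields the first claim.

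For the lower bound, I would set $G = F$ and use coercivity $\langle A h, h\rangle_{V^{\ast}, V} \geqslant c\|h\|_{\dot{V}}^2$ pathwise, so that $-\mathbb{E}[(\mathcal{L}_0 F)F] = \mathbb{E}[\langle A D F, D F\rangle_{V^{\ast}, V}] \geqslant c\|F\|_{\dot{\mathcal{H}}^1_0}^2$. Since $(1-\mathcal{L}_0)F \in L^2(\mu)$, the pairing $\langle(1-\mathcal{L}_0)F, F\rangle_{\mathcal{H}^{-1}_0, \mathcal{H}^1_0}$ reduces to $\mathbb{E}[(1-\mathcal{L}_0)F \cdot F]$, and adding $\mathbb{E}[F^2] = \|F\|_{L^2(\mu)}^2$ recovers the full norm:
\begin{equation*}
  \langle(1-\mathcal{L}_0)F, F\rangle_{\mathcal{H}^{-1}_0, \mathcal{H}^1_0} \geqslant \|F\|_{L^2(\mu)}^2 + c\|F\|_{\dot{\mathcal{H}}^1_0}^2 \gtrsim \|F\|_{\mathcal{H}^1_0}^2.
\end{equation*}
This is precisely where the additive $1$ is needed: the coercivity of $A$ controls only the homogeneous seminorm $\|\cdot\|_{\dot{\mathcal{H}}^1_0}$, and the identity supplies the missing $L^2$ part. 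The extension statement then follows from the density of $\mathcal{C}$ in $\mathcal{H}^1_0$: the upper bound makes $1-\mathcal{L}_0$ bounded $\mathcal{H}^1_0 \to \mathcal{H}^{-1}_0$ on a dense subspace, so it extends uniquely by continuity, and the lower bound, being continuous in both arguments, persists on all of $\mathcal{H}^1_0$.

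I expect the estimates themselves to be immediate once the divergence identity is in place; the main obstacle is the bookkeeping around that identity. Concretely, one must verify pathwise that $D F(\eta) \in V$ and $A D F(\eta) \in H \subset V^{\ast}$, that $A D F \in \mathrm{dom}(\delta)$, and that the $H$- and $V^{\ast},V$-pairings coincide here, so that the \emph{homogeneous} estimates of Assumption~\ref{ass:A} (rather than full $V$-norm estimates) become applicable. One should also confirm that the abstract duality pairing $\langle\cdot,\cdot\rangle_{\mathcal{H}^{-1}_0,\mathcal{H}^1_0}$ reduces to the $L^2(\mu)$ inner product on cylinder functions, which is what legitimizes testing against $G = F$ in the lower bound.
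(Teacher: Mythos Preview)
Your proposal is correct and follows essentially the same approach as the paper: both reduce to the adjointness identity $\mathbb{E}[(\mathcal{L}_0 F)G] = -\mathbb{E}[\langle A D F, D G\rangle_H]$, then apply the pathwise upper and lower bounds on $A$ from Assumption~\ref{ass:A} together with Cauchy--Schwarz in $L^2(\mu)$. Your additional bookkeeping around domains and the compatibility of pairings is more explicit than the paper's presentation but does not change the argument.
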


\begin{proof}
  We use the
  bound $| \langle A v, w \rangle_{V^{\ast}, V} | \leqslant C \| v
  \|_{\dot{V}} \| w \|_{\dot{V}}$ together with the Cauchy-Schwarz inequality
  to obtain the claimed upper bound for $\| (1 -\mathcal{L}_0) F
  \|_{\mathcal{H}^{- 1}_0}$ by testing against $G \in \mathcal C$:
  \begin{align}
    \mathbb{E} [(1 -\mathcal{L}_0) F G] & =  \mathbb{E} [F G] +\mathbb{E}
    [\delta (A D F) G] \nonumber\\
    & =  \mathbb{E} [F G] +\mathbb{E} [\langle A D F, D G \rangle_H]
    \nonumber\\
    & \leqslant  \mathbb{E} [F^2]^{1 / 2} \mathbb{E} [G^2]^{1 / 2} +
    C\mathbb{E} [\| D F \|_{\dot{V}}^2]^{1 / 2} \mathbb{E} [\| D G
    \|_{\dot{V}}]^{1 / 2} \nonumber\\
    & \lesssim  \| F \|_{\mathcal{H}^1_0} \| G \|_{\mathcal{H}^1_0} . 
  \end{align}
  For the lower bound, we similarly estimate
  \begin{align}
    \mathbb{E} [((1 -\mathcal{L}_0) F) F] & = \mathbb{E} [F^2] +\mathbb{E}
    [\langle A D F, D F \rangle_H] \nonumber\\
    & \geqslant \mathbb{E} [F^2] + c\mathbb{E} [\| D F \|_{\dot{V}}^2]
    \nonumber\\
    & \gtrsim \| F \|_{\mathcal{H}^1_0}^2. \qedhere
  \end{align}
\end{proof}

\begin{definition}[$V \otimes_s H$]
  \label{def:V-tensor-H}For $m \in \mathbb{N}$, symmetric $a = (a_{i j})_{i, j
  = 1, \ldots, m} \in \mathbb{R}^{m \times m}$, and $v_1, \ldots, v_m \in V$
  we define
  \[ \left\| \sum_{i, j = 1}^m a_{i j} v_i \otimes v_j \right\|_{V \otimes_s
     H}^2 \assign \sum_{i_1, j_1, i_2, j_2 = 1}^m a_{i_1 j_1} a_{i_2 j_2}
     \langle v_{i_1}, v_{i_2} \rangle_V \langle v_{j_1}, v_{j_2} \rangle_H, \]
  and we write $V \otimes_s H \subset H \otimes_s H$ for the completion of
  $\left\{ \sum_{i, j = 1}^m a_{i j} v_i \otimes v_j \right\}$ with respect to
  $\| \cdummy \|_{V \otimes_s H}$.
\end{definition}

In the following, we will strongly rely on our assumption that the map
\begin{equation}
  V \otimes_s H \ni \sum_{i, j = 1}^m a_{i j} v_i \otimes v_j \mapsto B \left(
  \sum_{i, j = 1}^m a_{i j} v_i \otimes v_j \right) \assign \sum_{i, j = 1}^m
  a_{i j} B (v_i, v_j) \in \dot{V}^{\ast}
\end{equation}
has a unique continuous extension to all of $V \otimes_s H$, which we still
denote by $B$. We first derive uniform bounds for $\mathcal{G}^{\rho}$, then
we will discuss the convergence of $(\mathcal{G}^{\rho_N})_N$ if $\rho_N$ is a
sequence of regularization operators that converge to the identity.

\begin{lemma}[Uniform bounds for $\mathcal{G}^\rho$]
  \label{lem:G-bounds} Let $\rho: H \rightarrow V$ be a bounded operator.
  For $\alpha \in \{ 0, 1 \}$ the following estimates hold:
  \begin{equation}\label{eq:G-bounds}
    \| \mathcal{G}^{\rho} F \|_{\mathcal{H}^{- 1}_{\alpha - 1}} \lesssim (\|
    \rho \|_{L (H, H)} + \| \rho \|_{L (\dot{V}, \dot{V})})^3 \| F
    \|_{\mathcal{H}^1_{\alpha}}, \qquad F \in \mathcal{C},
  \end{equation}
  as well as the commutator estimate
  \begin{equation} \| [\mathcal{N}, \mathcal{G}^\rho] F \|_{\mathcal{H}^{- 1}_{\alpha - 1}}
     \lesssim (\|
    \rho \|_{L (H, H)} + \| \rho \|_{L (\dot{V}, \dot{V})})^3 \| F \|_{\mathcal{H}^1_{\alpha}}, \qquad F \in \mathcal{C}.
  \end{equation}
\end{lemma}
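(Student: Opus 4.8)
The plan is to reduce both inequalities to a single bilinear duality estimate on $L^2(\mu)$ and to isolate the one place where the cylinder structure buys us the improved $\rho$-dependence. Write $\mathcal{G}^\rho=\mathcal{G}^\rho_++\mathcal{G}^\rho_-$ as in~\eqref{eq:G-Gplus-Gminus}. A direct inspection of the formulas shows that $\mathcal{G}^\rho_+$ raises the homogeneous chaos by one and $\mathcal{G}^\rho_-$ lowers it by one; since $\mathcal{N}$ acts as multiplication by $n$ on the $n$-th chaos, this gives $[\mathcal{N},\mathcal{G}^\rho_\pm]=\pm\,\mathcal{G}^\rho_\pm$ on $\mathcal{C}$ and hence $[\mathcal{N},\mathcal{G}^\rho]=\mathcal{G}^\rho_+-\mathcal{G}^\rho_-$. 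Thus if I bound $\mathcal{G}^\rho_+$ and $\mathcal{G}^\rho_-$ \emph{separately} in $\mathcal{H}^{-1}_{\alpha-1}$ by $\|F\|_{\mathcal{H}^1_\alpha}$, then both $\mathcal{G}^\rho_++\mathcal{G}^\rho_-$ and $\mathcal{G}^\rho_+-\mathcal{G}^\rho_-$ obey the claimed bound and the commutator estimate comes for free. Testing against $G\in\mathcal{C}$ with $\|G\|_{\mathcal{H}^1_{1-\alpha}}=1$ (recall $\mathcal{H}^{-1}_{\alpha-1}$ is dual to $\mathcal{H}^1_{1-\alpha}$) and using the adjointness of Lemma~\ref{Gantisymmetric} together with~\eqref{eq:G-duality-representation}, both $\mathbb{E}[(\mathcal{G}^\rho_+ F)G]$ and $\mathbb{E}[(\mathcal{G}^\rho_- F)G]$ take the form $\pm\,\mathbb{E}[\langle B(\rho^{\otimes 2}D^2 X),\rho DY\rangle_{V^\ast,V}]$ with $\{X,Y\}=\{F,G\}$. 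Everything therefore reduces to proving, for $\beta\in\{0,1\}$ and cylinder $X,Y$,
\[
  |\mathbb{E}[\langle B(\rho^{\otimes 2}D^2 X),\rho DY\rangle_{V^\ast,V}]|\lesssim(\|\rho\|_{L(H,H)}+\|\rho\|_{L(\dot V,\dot V)})^3\,\|X\|_{\mathcal{H}^1_\beta}\|Y\|_{\mathcal{H}^1_{1-\beta}}.
\]

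For the bilinear form I first apply $\dot V^\ast$–$\dot V$ duality and the continuity of $B$ on $V\otimes_s H$ from Assumption~\ref{ass:B},
\[
  |\langle B(\rho^{\otimes 2}D^2 X),\rho DY\rangle_{V^\ast,V}|\leq\|B(\rho^{\otimes 2}D^2 X)\|_{\dot V^\ast}\,\|\rho DY\|_{\dot V}\lesssim\|\rho^{\otimes 2}D^2 X\|_{V\otimes_s H}\,\|\rho DY\|_{\dot V}.
\]
The decisive point—where the improvement over the $L^2(\mu)$-bound in the remark after~\eqref{eq:B-rho} comes from, avoiding the factor $\|\rho\|_{L(H,V)}$—is that for a cylinder function $X$ the Malliavin Hessian $D^2 X$ lies in $\mathcal{S}\otimes_s\mathcal{S}$, so both of its slots already sit in $V$. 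I may therefore measure the first slot in the full norm of $V$ and use $\rho\colon V\to V$ rather than $\rho\colon H\to V$; since $\|\rho w\|_V^2=\|\rho w\|_{\dot V}^2+\|\rho w\|_H^2$ one gets $\|\rho\|_{L(V,V)}\lesssim\|\rho\|_{L(\dot V,\dot V)}+\|\rho\|_{L(H,H)}$ on $V$. A tensor–operator bound (writing $\|\rho^{\otimes 2}\psi\|_{V\otimes_s H}^2$ as a positive quadratic form $\langle\psi,(\rho^\ast_V\rho\otimes\rho^\ast_H\rho)\psi\rangle$ and using $\|P\otimes Q\|=\|P\|\,\|Q\|$ for positive $P,Q$) then yields
\[
  \|\rho^{\otimes 2}D^2 X\|_{V\otimes_s H}\lesssim\|\rho\|_{L(V,V)}\|\rho\|_{L(H,H)}\,\|D^2 X\|_{V\otimes_s H}\lesssim(\|\rho\|_{L(\dot V,\dot V)}+\|\rho\|_{L(H,H)})^2\,\|D^2 X\|_{V\otimes_s H},
\]
while $\|\rho DY\|_{\dot V}\leq\|\rho\|_{L(\dot V,\dot V)}\|DY\|_{\dot V}$ supplies the third factor and produces the cube.

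It remains to bound $\mathbb{E}[\|D^2 X\|_{V\otimes_s H}\,\|DY\|_{\dot V}]$ by $\|X\|_{\mathcal{H}^1_\beta}\|Y\|_{\mathcal{H}^1_{1-\beta}}$, and this number-operator bookkeeping is where I expect the real work. By orthogonality of the chaoses the pairing vanishes unless $X$ and $Y$ lie in adjacent levels $m$ and $m-1$, so I would first project onto fixed chaoses. Splitting the first slot via $\|\cdot\|_V^2=\|\cdot\|_{\dot V}^2+\|\cdot\|_H^2$ and using the chaos identities for $D$ (boundedness on a fixed chaos, the relation $\mathbb{E}[\|DZ\|_H^2]=\mathbb{E}[Z\,\mathcal{N}Z]$, and $D(1+\mathcal{N})^\gamma=(2+\mathcal{N})^\gamma D$), one controls $\mathbb{E}[\|D^2 X\|_{V\otimes_s H}^2]^{1/2}$ on the $m$-th chaos by $(1+m)^{1/2}\|X\|_{\dot{\mathcal{H}}^1_0}+(1+m)\|X\|_{L^2(\mu)}$, and $\mathbb{E}[\|DY\|_{\dot V}^2]^{1/2}$ by $\|Y\|_{\dot{\mathcal{H}}^1_0}$.

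The main obstacle is the degree mismatch in the last step: $D^2 X$ carries two derivatives, whereas $\|X\|_{\mathcal{H}^1_\beta}$ controls only one derivative when $\beta=0$, so the two factors cannot be estimated independently by a naive Cauchy–Schwarz split. The resolution is that the surplus powers of $(1+m)$ must be redistributed across the pairing: because $m\simeq m-1$ on adjacent chaoses, the extra $(1+\mathcal{N})$-factor is absorbed into $(1+\mathcal{N})^{1-\beta}$ acting on $Y$ (for $\beta=0$) or kept as $(1+\mathcal{N})^\beta$ on $X$ (for $\beta=1$), in either case producing exactly $\|X\|_{\mathcal{H}^1_\beta}\|Y\|_{\mathcal{H}^1_{1-\beta}}$. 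Making this transfer rigorous and uniform in the chaos level $m$—ideally by inserting $(1+\mathcal{N})^{\pm\gamma}$ before applying Cauchy–Schwarz rather than arguing chaos by chaos—is the technical heart of the argument; the bounds on $\mathcal{G}^\rho$ and on $[\mathcal{N},\mathcal{G}^\rho]$ then follow simultaneously from the reduction in the first paragraph.
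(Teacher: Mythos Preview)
Your proposal is correct and follows essentially the same approach as the paper: the commutator identity $[\mathcal{N},\mathcal{G}^\rho_\pm]=\pm\mathcal{G}^\rho_\pm$, the duality representation~\eqref{eq:G-duality-representation}, the tensor--operator bound on $\rho^{\otimes 2}$, and the $(1+\mathcal{N})^{\pm 1}$ insertion before Cauchy--Schwarz are exactly the paper's ingredients. The one step you leave as ``the technical heart'' is supplied verbatim by Lemma~\ref{lem:second-Malliavin}, which gives $\mathbb{E}[\|D^2 G\|_{\dot V\otimes_s H}^2]=\|\tmmathbf{1}_{\mathcal{N}\geqslant 1}(\mathcal{N}-1)^{1/2}G\|_{\dot{\mathcal{H}}^1_0}^2$ and hence the required $\|G\|_{\mathcal{H}^1_1}$ control without any chaos-by-chaos argument.
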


\begin{proof}
  To bound $\mathcal{G}_+^{\rho} F$ we consider a cylinder function $G \in
  \mathcal{C}$ and use equation~\eqref{eq:G-duality-representation}  and in the third line we apply
  Lemma~\ref{lem:tensor-operator} from the appendix and use that $\| \rho
  \|_{L (V, V)} \leqslant \| \rho \|_{L (H, H)} + \| \rho \|_{L (\dot{V},
  \dot{V})}$
  \begin{align}
    \mathbb{E} [(\mathcal{G}_+^{\rho} F) G] & =  \mathbb{E} [\langle B (\rho^{\otimes 2} D^2G),
  \rho D F \rangle_{V^{\ast}, V}] \nonumber\\
    & \lesssim  \mathbb{E} [\| \rho^{\otimes 2} D^2 G \|_{V \otimes_s
    H}^2]^{1 / 2} \mathbb{E} [\| \rho D F \|_{\dot{V}}^2]^{1 / 2} \nonumber\\
    & \lesssim  (\| \rho \|_{L (H, H)} + \| \rho \|_{L (\dot{V},
    \dot{V})})^3 (\mathbb{E} [\| D^2 G \|_{\dot{V} \otimes_s H}^2] +\mathbb{E}
    [\| D^2 G \|_{H \otimes_s H}^2])^{1 / 2} \mathbb{E} [\| D F
    \|_{\dot{V}}^2]^{1 / 2} \nonumber\\
    & \lesssim  (\| \rho \|_{L (H, H)} + \| \rho \|_{L (\dot{V},
    \dot{V})})^3 (\| \tmmathbf{1}_{\mathcal{N} \geqslant 1} (\mathcal{N}-
    1)^{1 / 2} G \|_{\dot{\mathcal{H}}^1_0}^2 + \| \mathcal{N}^{1 / 2}
    (\mathcal{N}- 1)^{1 / 2} G \|_{L^2 (\mu)}^2)^{1 / 2} \nonumber\\
    &   \qquad \cdot \| F \|_{\dot{\mathcal{H}}^1_0} \nonumber\\
    & \lesssim  (\| \rho \|_{L (H, H)} + \| \rho \|_{L (\dot{V},
    \dot{V})})^3 \| F \|_{\dot{\mathcal{H}}^1_0} \| G \|_{\mathcal{H}^1_1}, 
  \end{align}
  using Lemma~\ref{lem:second-Malliavin} which follows below this proof in the
  sixth line. As $(1 +\mathcal{N}) \delta = \delta (2 +\mathcal{N})$ and $(3
  +\mathcal{N}) D = D (2 +\mathcal{N})$, see Lemma~\ref{Ncalculus} in the
  appendix, this also yields
  \begin{align}
    \mathbb{E} [(\mathcal{G}_+^{\rho} F) G] & =  \sum_{k, \ell \in
    \mathbb{N}} \mathbb{E} [(1 +\mathcal{N}) \delta (\delta (\langle B (e_k,
    e_{\ell}), D F \rangle_{\dot{V}^{\ast}, \dot{V}} e_k) e_{\ell}) (1
    +\mathcal{N})^{- 1} G] \nonumber\\
    & =  \sum_{k, \ell \in \mathbb{N}} \mathbb{E} [\langle B (e_k,
    e_{\ell}), D (2 +\mathcal{N}) F \rangle_{\dot{V}^{\ast}, \dot{V}} D_{e_k}
    D_{e_{\ell}} (1 +\mathcal{N})^{- 1} G] \nonumber\\
    & =  \mathbb{E} [(\mathcal{G}_+ (2 +\mathcal{N}) F) (1 +\mathcal{N})^{-
    1} G] \nonumber\\
    & \lesssim  (\| \rho \|_{L (H, H)} + \| \rho \|_{L (\dot{V},
    \dot{V})})^3 \| (2 +\mathcal{N}) F \|_{\dot{\mathcal{H}}^1_0}, \| (1
    +\mathcal{N})^{- 1} G \|_{\mathcal{H}^1_1} \nonumber\\
    & \simeq  (\| \rho \|_{L (H, H)} + \| \rho \|_{L (\dot{V}, \dot{V})})^3
    \| F \|_{\dot{\mathcal{H}}^1_1} \| G \|_{\mathcal{H}^1_0} . 
  \end{align}
  We estimate $\mathcal{G}_-^{\rho}$ by duality with $\mathcal{G}^{\rho}_+$
  \[ \mathbb{E} [(\mathcal{G}^{\rho}_- F) G] =\mathbb{E}
     [F\mathcal{G}^{\rho}_+ G] \lesssim (\| \rho \|_{L (H, H)} + \| \rho \|_{L
     (\dot{V}, \dot{V})})^3 (\| F \|_{\mathcal{H}^1_1} \| G
     \|_{\dot{\mathcal{H}}^1_0}) \wedge (\| F \|_{\mathcal{H}^1_0} \| G
     \|_{\dot{\mathcal{H}}^1_1}) . \]
   The commutator estimate is the same as in~\cite{Graefner2024SPDE}: We just showed
  \[ \mathcal{N}\mathcal{G}^{\rho}_+ =\mathcal{G}_+^{\rho} (1 +\mathcal{N})
     \qquad \Rightarrow \qquad [\mathcal{N}, \mathcal{G}^{\rho}_+]
     =\mathcal{G}^{\rho}_+, \]
  and by duality this also yields $[\mathcal{N}, \mathcal{G}^{\rho}_-] =
  -\mathcal{G}^{\rho}_-$.
\end{proof}

\begin{lemma}
  \label{lem:second-Malliavin}The following identity holds for all $F \in
  \mathcal{C}$:
  \[ \mathbb{E} [\| D^2 F \|_{\dot{V} \otimes_s H}^2] = \|
     \tmmathbf{1}_{\mathcal{N} \geqslant 1} (\mathcal{N}- 1)^{1 / 2} F
     \|^2_{\dot{\mathcal{H}}^1_0} . \]
\end{lemma}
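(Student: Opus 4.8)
The plan is to fix an orthonormal basis $(e_k)_k$ of $H$, expand both sides in this basis, and reduce the claim to the single elementary fact that $\mathbb{E}[\langle DG, DG'\rangle_H] = \mathbb{E}[(\mathcal{N}G)\,G']$ for cylinder functions $G,G'$, which is immediate from $\mathcal{N}=\delta D$ and the duality between $D$ and $\delta$. First I would unfold the left-hand side using the norm of Definition~\ref{def:V-tensor-H}: writing $D^2 F = \sum_{k,\ell}(D_{e_k}D_{e_\ell}F)\,e_k\otimes e_\ell$ and observing that the second tensor slot carries the $H$-inner product, orthonormality of $(e_k)$ in $H$ collapses the corresponding sum and gives
\[
\|D^2 F\|_{\dot V\otimes_s H}^2 = \sum_{\ell}\sum_{k,k'}\langle e_k,e_{k'}\rangle_{\dot V}\,(D_{e_k}D_{e_\ell}F)(D_{e_{k'}}D_{e_\ell}F) = \sum_\ell \|D(D_{e_\ell}F)\|_{\dot V}^2 .
\]
Taking expectations and recalling $\|G\|_{\dot{\mathcal H}^1_0}^2=\mathbb{E}[\|DG\|_{\dot V}^2]$, this already yields $\mathbb{E}[\|D^2F\|_{\dot V\otimes_s H}^2]=\sum_\ell\|D_{e_\ell}F\|_{\dot{\mathcal H}^1_0}^2$. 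All basis sums converge absolutely because, for $F\in\mathcal C$, the derivatives $DF$ and $D^2F$ take values in a fixed finite-dimensional subspace of $\mathcal S$ (resp.\ $\mathcal S\otimes\mathcal S$), so sums and expectations may be freely interchanged.

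For the second step I would rewrite the same quantity as $\sum_{k,k'}\langle e_k,e_{k'}\rangle_{\dot V}\,\mathbb{E}[\langle D(D_{e_k}F),D(D_{e_{k'}}F)\rangle_H]$, using the symmetry $D_{e_k}D_{e_\ell}F=D_{e_\ell}D_{e_k}F$ to turn the $\ell$-sum into the $H$-inner product of the gradients of $D_{e_k}F$ and $D_{e_{k'}}F$. Applying the core identity with $G=D_{e_k}F$ and $G'=D_{e_{k'}}F$ converts each term into $\mathbb{E}[(\mathcal{N}D_{e_k}F)\,D_{e_{k'}}F]$. On the right-hand side I would set $\Phi:=\tmmathbf{1}_{\mathcal N\geqslant 1}(\mathcal N-1)^{1/2}F$ and use the commutation rule $D\,g(\mathcal N)=g(\mathcal N+1)\,D$ (reflecting, as in Lemma~\ref{Ncalculus}, that $D$ lowers the chaos degree by one); with $g(x)=\tmmathbf{1}_{x\geqslant1}(x-1)^{1/2}$ this gives $g(\mathcal N+1)=\mathcal N^{1/2}$, hence $D_{e_k}\Phi=\mathcal N^{1/2}D_{e_k}F$. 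Self-adjointness of $\mathcal N^{1/2}$ then produces $\mathbb{E}[D_{e_k}\Phi\,D_{e_{k'}}\Phi]=\mathbb{E}[(\mathcal N D_{e_k}F)\,D_{e_{k'}}F]$, so summing against $\langle e_k,e_{k'}\rangle_{\dot V}$ matches the two expressions term by term and gives $\mathbb{E}[\|D^2F\|_{\dot V\otimes_s H}^2]=\mathbb{E}[\|D\Phi\|_{\dot V}^2]=\|\Phi\|_{\dot{\mathcal H}^1_0}^2$, which is the assertion.

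I expect the difficulties here to be bookkeeping rather than conceptual. The point to handle carefully is the asymmetric structure of the $\dot V\otimes_s H$ norm—exactly one slot carries $\dot V$ and the other $H$—so that it is the $H$-slot orthonormality which collapses the inner sum; the symmetry of $D^2F$ makes the choice of slot immaterial, but I would state this explicitly. The second delicate point is the functional-calculus identity $D\,g(\mathcal N)=g(\mathcal N+1)\,D$ applied to the non-smooth symbol $g(x)=\tmmathbf{1}_{x\geqslant1}(x-1)^{1/2}$, where the degree shift is precisely what removes the indicator and turns $(\mathcal N-1)^{1/2}$ into $\mathcal N^{1/2}$; since every $F\in\mathcal C$ has a finite chaos expansion this is unambiguous. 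As a fallback, should the operator manipulation require more justification, I would instead decompose $F=\sum_n F_n$ into homogeneous chaoses and verify the identity degree by degree, using $\mathcal N F_n=nF_n$ to see that both sides reduce to $\sum_n (n-1)\,\mathbb{E}[\|DF_n\|_{\dot V}^2]$.
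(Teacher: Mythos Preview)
Your argument is correct and coincides almost verbatim with the paper's \emph{alternative} proof of Lemma~\ref{lem:second-Malliavin} given in the appendix, which also reduces to $\mathbb{E}[\langle DG,DG'\rangle_H]=\mathbb{E}[(\mathcal{N}G)G']$ and the commutation $\mathcal{N}^{1/2}D_h=D_h(\mathcal{N}-1)^{1/2}\tmmathbf{1}_{\mathcal{N}\geqslant 1}$; your fallback via chaos decomposition is in fact the paper's primary proof in the main text.
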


\begin{proof}
  This is the only proof where we explicitly use the chaos expansion under $\mu$, and it could be avoided with some additional work. The chaos expansion is $F =
  \sum_{n = 0}^{\infty} W_n (\varphi_n)$ with $\varphi_n \in H^{\otimes n}_s$.
  Then
  {\allowdisplaybreaks
  \begin{align*} 
    \mathbb{E} [\| D^2 F \|_{\dot{V} \otimes_s H}^2] & = \sum_{k_1, k_2,
    \ell_1, \ell_2} \mathbb{E} \left[ D_{e_{\ell_1}} D_{e_{k_1}} F
    D_{e_{\ell_2}} D_{e_{k_2}} F \right] \langle e_{\ell_1}, e_{\ell_2}
    \rangle_{\dot{V}} \langle e_{k_1}, e_{k_2} \rangle_H\\
    & = \sum_{k, \ell_1, \ell_2} \sum_{n = 2}^{\infty} n^2 (n - 1)^2
    \mathbb{E} [W_{n - 2} (\varphi_n (e_{\ell_1}, e_k, \cdummy)) W_{n - 2}
    (\varphi_n (e_{\ell_2}, e_k, \cdummy))] \langle e_{\ell_1}, e_{\ell_2}
    \rangle_{\dot{V}}\\
    & = \sum_{k, \ell_1, \ell_2} \sum_{n = 2}^{\infty} n^2 (n - 1)^2 (n -
    2) ! \langle \varphi_n (e_{\ell_1}, e_k, \cdummy), \varphi_n (e_{\ell_2},
    e_k, \cdummy) \rangle_{H^{\otimes (n - 2)}_s} \langle e_{\ell_1},
    e_{\ell_2} \rangle_{\dot{V}}\\
    & = \sum_{\ell_1, \ell_2} \sum_{n = 2}^{\infty} n^2 (n - 1) (n - 1) !
    \langle \varphi_n (e_{\ell_1}, \cdummy), \varphi_n (e_{\ell_2}, \cdummy)
    \rangle_{H^{\otimes (n - 1)}_s} \langle e_{\ell_1}, e_{\ell_2}
    \rangle_{\dot{V}}\\
    & = \sum_{n = 1}^{\infty} (n - 1) \mathbb{E} \left[ D_{e_{\ell_1}} W_n
    (\varphi_n) D_{e_{\ell_2}} W_n (\varphi_n) \right] \langle e_{\ell_1},
    e_{\ell_2} \rangle_{\dot{V}}\\
    & = \sum_{n = 1}^{\infty} \mathbb{E} \left[ D_{e_{\ell_1}}
    (\mathcal{N}- 1)^{1 / 2} W_n (\varphi_n) D_{e_{\ell_2}} (\mathcal{N}-
    1)^{1 / 2} W_n (\varphi_n) \right] \langle e_{\ell_1}, e_{\ell_2}
    \rangle_{\dot{V}}\\
    & = \| \tmmathbf{1}_{\mathcal{N} \geqslant 1} (\mathcal{N}- 1)^{1 / 2}
    F \|^2_{\dot{\mathcal{H}}^1_0} . \qedhere
  \end{align*}}
\end{proof}

Our estimates for $\mathcal{G}^{\rho}_+$ and $\mathcal{G}^{\rho}_-$ are based
on duality arguments, which means that we need strong assumptions on
$(\rho_N)$ to obtain the strong convergence of $(\mathcal{G}^{\rho_N} F)_N$ in
$\mathcal{H}^{- 1}_0$ in the next lemma. The weak convergence in
$\mathcal{H}^{- 1}_0$ would require much weaker assumptions, essentially only
$ \rho_N h \to h$ in $H$ for all $h\in H$, and $\rho_N v \to v$ in $\dot V$ for all $v \in \dot V$.

\begin{lemma}[Approximation of $\mathcal G F$]
  \label{lem:G-approximation}Let $(\rho_N)_{N \in \mathbb{N}} \subset L (H,
  V)$ be such that $\sup_N \| \rho_N \|_{L (H, H)} + \| \rho_N \|_{L (\dot{V},
  \dot{V})} < \infty$ and for all $f \in  S$
  \begin{align*}
    \| (\rho_N-\text{Id}) f  \|_V + \| \langle B\circ \rho_N^{\otimes 2}, f \rangle_{V^{\ast}, V} -
    \langle B, f \rangle_{V^{\ast}, V} \|_{L (V \otimes_s H, \mathbb{R})} &
    \leqslant  C_1 (f) \varepsilon_N,\\
    {\color{red}\| \langle B (\varphi), (\rho_N-\text{Id})\cdummy \rangle_{V^{\ast}, V}  \|_{L (\dot{V}, \mathbb{R})}+}\| \langle B (\rho_N^{\otimes 2}\varphi), \cdummy \rangle_{V^{\ast}, V} - \langle B
    (\varphi), \cdummy \rangle_{V^{\ast}, V} \|_{L (\dot{V}, \mathbb{R})} &
    \leqslant C_2 (\varphi) \varepsilon_N,
  \end{align*}
  where $C_1 :  S \rightarrow [0, \infty)$ and $C_2 :  S \otimes_s  S \rightarrow
  [0, \infty)$ are such that $\mathbb{E} [C_1 (D F)^2 + C_2 (D^2 F)^2] <
  \infty$ for all $F \in \mathcal{C}$ and where $\varepsilon_N \rightarrow 0$.
  Then for any cylinder function $F \in \mathcal{C}$ the limit
  \[ \mathcal{G}F = \lim_{N \rightarrow \infty} \mathcal{G}^{\rho_N} F \]
  exists in $\mathcal{H}^{- 1}_0$. The limit does not depend on the specific
  approximation, it satisfies the same bounds as in Lemma~\ref{lem:G-bounds}
  with the $\rho$-dependent factors on the right hand side equal to $1$, and
  it is also antisymmetric on cylinder functions. Moreover, for $f \in  S$ we also have the
  convergence
  \begin{equation}\label{eq:B-tilde-speed}
    \|\mathcal{G} (\eta (f)) - \langle {:}
     \tilde{B}^{\rho_N} (\eta, \eta) {:}, f \rangle_{V^{\ast}, V}\|_{\mathcal H^{-1}_0} \lesssim \varepsilon_N C_1(f),
  \end{equation}
  where we recall that formally $\langle {:}
  \tilde{B}^{\rho_N} (\eta, \eta) {:}, f \rangle_{V^{\ast}, V} = \langle {:} B
  (\rho_N \eta, \rho_N \eta) {:}, f \rangle_{V^{\ast}, V}$, i.e. the
  approximation $\rho_N$ is not acting on $f$.
\end{lemma}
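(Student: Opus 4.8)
The plan is to work entirely through the bilinear pairing $(F,G)\mapsto\mathbb{E}[(\mathcal{G}^{\rho_N}F)G]$ against cylinder functions $G$ and to identify a $\rho_N$-free candidate limit. Combining the duality representation~\eqref{eq:G-duality-representation} with the antisymmetry of Lemma~\ref{Gantisymmetric} gives, for $F,G\in\mathcal{C}$,
\[
\mathbb{E}[(\mathcal{G}^{\rho_N}F)G]=\mathbb{E}[\langle B(\rho_N^{\otimes2}D^2G),\rho_N DF\rangle_{V^\ast,V}]-\mathbb{E}[\langle B(\rho_N^{\otimes2}D^2F),\rho_N DG\rangle_{V^\ast,V}].
\]
Since $D^2F,D^2G$ take values in $\mathcal{S}\otimes_s\mathcal{S}\subset V\otimes_s H$ and $DF,DG$ in $\mathcal{S}\subset V\subset\dot V$, and $B\colon V\otimes_s H\to\dot V^\ast$ is bounded (Assumption~\ref{ass:B}), the formal $N=\infty$ expression $\mathcal{B}(F,G):=\mathbb{E}[\langle B(D^2G),DF\rangle]-\mathbb{E}[\langle B(D^2F),DG\rangle]$ is well defined and manifestly antisymmetric. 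Re-running the estimates of Lemma~\ref{lem:G-bounds} on $\mathcal{B}$ — which now invoke only $\|B(\varphi)\|_{\dot V^\ast}\lesssim\|\varphi\|_{V\otimes_s H}$ and Lemma~\ref{lem:second-Malliavin}, and in particular never use $\|\rho\|_{L(H,V)}$ — yields exactly the bounds of Lemma~\ref{lem:G-bounds} with the $\rho$-factors replaced by $1$. This defines $\mathcal{G}F\in\mathcal{H}^{-1}_0$ as the functional $G\mapsto\mathcal{B}(F,G)$, independent of $(\rho_N)$; it remains to prove strong convergence $\mathbb{E}[(\mathcal{G}^{\rho_N}F)G]\to\mathcal{B}(F,G)$ uniformly over $G$ in the unit ball of $\mathcal{H}^1_0$.

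For the first (``$\mathcal{G}_+$'') term the test argument $DF$ is fixed, which is the favourable case. Telescoping,
\[
\langle B(\rho_N^{\otimes2}D^2G),\rho_N DF\rangle-\langle B(D^2G),DF\rangle=\langle B(\rho_N^{\otimes2}D^2G),(\rho_N-I)DF\rangle+\langle(B\circ\rho_N^{\otimes2}-B)(D^2G),DF\rangle,
\]
the first term is controlled by $\|(\rho_N-I)DF\|_V\le C_1(DF)\varepsilon_N$ (first estimate of Assumption~\ref{ass:B}) together with $\|B(\rho_N^{\otimes2}D^2G)\|_{\dot V^\ast}\lesssim\|D^2G\|_{V\otimes_s H}$, and the second by the second estimate with fixed test $f=DF$. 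Both produce $\varepsilon_N C_1(DF)\|D^2G\|_{V\otimes_s H}$; after Cauchy--Schwarz and the $\mathcal{N}$-commutation relations from Lemma~\ref{lem:G-bounds} (rewriting the pairing with $F$ replaced by $(2+\mathcal{N})F$ and $G$ by $(1+\mathcal{N})^{-1}G$ to pass from $\|G\|_{\mathcal{H}^1_1}$ to $\|G\|_{\mathcal{H}^1_0}$) this is $\lesssim\varepsilon_N\,\mathbb{E}[C_1(D(2+\mathcal{N})F)^2]^{1/2}\|G\|_{\mathcal{H}^1_0}$, finite since $(2+\mathcal{N})F\in\mathcal{C}$.

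The genuine obstacle is the second (``$\mathcal{G}_-$'') term, where $DG$ is the variable test: the naive residue $\langle B(\rho_N^{\otimes2}D^2F),(\rho_N-I)DG\rangle$ puts $\rho_N-I$ on the moving function $DG$, where the assumptions give no rate (only an $O(1)$ bound). The two representations are in tension: keeping $DG$ in the test slot (via Lemma~\ref{Gantisymmetric}) gives the good norm $\|DG\|_{\dot V}$ but no rate, whereas the original definition writes this term as $2\,\mathbb{E}[\langle B(\rho_N DG,\rho_N[D^2F]_1),\rho_N[D^2F]_2\rangle]$, i.e.\ with $DG$ in a $B$-argument slot and the fixed $F$-derivatives forming the other argument and the test. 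I would use this second form: then every $\rho_N-I$ stripped off by telescoping lands on the fixed tensor $D^2F$, controlled with a rate by the first and third estimates of Assumption~\ref{ass:B} (the third, with $\varphi=D^2F$ and variable test in $\dot V$, is precisely what handles the argument-regularization, explaining the role of $C_2$). The price is that $DG$ now enters through $\|DG\|_H=\|\mathcal{N}^{1/2}G\|_{L^2}$ rather than $\|DG\|_{\dot V}$. This is absorbed by a half-order number-operator shift: since $\mathcal{G}_-^{\rho_N}$ lowers the chaos by one, $\mathcal{G}_-^{\rho_N}=(1+\mathcal{N})^{-1/2}\mathcal{G}_-^{\rho_N}\mathcal{N}^{1/2}$, so the pairing equals $\mathbb{E}[(\mathcal{G}_-^{\rho_N}\mathcal{N}^{1/2}F)\,(1+\mathcal{N})^{-1/2}G]$; applying the telescoping estimate with $\mathcal{N}^{1/2}F$ (still a cylinder function with finite integrability constants) and using $\|\mathcal{N}^{1/2}(1+\mathcal{N})^{-1/2}G\|_{L^2}\le\|G\|_{\mathcal{H}^1_0}$ gives a bound $\lesssim\varepsilon_N\,\mathrm{const}(\mathcal{N}^{1/2}F)\|G\|_{\mathcal{H}^1_0}$, uniform in $G$. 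I expect the bookkeeping here — matching the basis-free contraction of $D^2F$ against the three assumption estimates and checking that the relevant $\mathbb{E}[C_\bullet(\cdot)^2]$ stay finite after the shift — to be the main technical work.

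Combining the last two paragraphs yields $\|\mathcal{G}^{\rho_N}F-\mathcal{G}F\|_{\mathcal{H}^{-1}_0}\to0$ with an explicit rate, and independence of the approximating sequence is automatic since $\mathcal{B}$ does not involve $(\rho_N)$, while the bounds and antisymmetry were already recorded for $\mathcal{B}$. Finally, for~\eqref{eq:B-tilde-speed} I take $F=\eta(f)$, so $D^2F=0$ kills the entire $\mathcal{G}_-$ term and $\mathcal{G}(\eta(f))$ is represented by $G\mapsto\mathbb{E}[\langle B(D^2G),f\rangle]$, while $\langle{:}\tilde B^{\rho_N}(\eta,\eta){:},f\rangle$ pairs with $G$ as $\mathbb{E}[\langle B(\rho_N^{\otimes2}D^2G),f\rangle]$. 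Their difference is $\mathbb{E}[\langle(B-B\circ\rho_N^{\otimes2})(D^2G),f\rangle]$, bounded by $C_1(f)\varepsilon_N\,\mathbb{E}[\|D^2G\|_{V\otimes_s H}]$ via the second estimate of Assumption~\ref{ass:B} with fixed $f$. Both objects lie in the second homogeneous chaos, so testing reduces to $G$'s second-chaos part, on which all number-operator weights are equivalent and $\mathbb{E}[\|D^2G\|_{V\otimes_s H}^2]^{1/2}\lesssim\|G\|_{\mathcal{H}^1_0}$; here no weight shift is needed and one obtains the clean rate $\varepsilon_N C_1(f)$.
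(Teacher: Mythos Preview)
Your treatment of $\mathcal{G}_+$ and of the final estimate~\eqref{eq:B-tilde-speed} is essentially the paper's: the paper also telescopes the duality representation~\eqref{eq:G-duality-representation} after the $\mathcal{N}$-shift, using both parts of the $C_1$-assumption on the fixed test $D(2+\mathcal{N})F$, and for~\eqref{eq:B-tilde-speed} it writes $\langle{:}\tilde B^{\rho_N}(\eta,\eta){:},f\rangle=\mathcal{G}^{\rho_N}(\eta(f))+\langle{:}\tilde B^{\rho_N}(\eta,\eta){:},f-\rho_N f\rangle$ and argues ``similarly''. The only structural difference is that the paper proves a Cauchy estimate between $\rho_N$ and $\rho_M$ rather than comparing directly to your $\rho$-free form $\mathcal{B}(F,G)$; these are equivalent.

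For $\mathcal{G}_-$, however, you take an unnecessary detour and the central claim is false. The paper simply keeps the duality form
\[
\mathbb{E}[(\mathcal{G}_-^{\rho}F)G]=-\mathbb{E}[\langle B(\rho^{\otimes 2}D^2F),\rho DG\rangle_{\dot V^\ast,\dot V}]
\]
and applies the $C_2$-assumption with the \emph{fixed} $\varphi=D^2F\in\mathcal{S}\otimes_s\mathcal{S}$, obtaining the displayed bound $(\varepsilon_N+\varepsilon_M)\,C_2(D^2F)\,\|DG\|_{\dot V}$ directly; no number-operator shift is used for $\mathcal{G}_-$. Your proposed switch to the ``original definition'' $2\,\mathbb{E}[\langle B(\rho_N DG,\rho_N[D^2F]_1),\rho_N[D^2F]_2\rangle]$ cannot improve this: by Lemma~\ref{lem:B-circular} together with the symmetry of $D^2F$ (exactly the computation in the proof of Lemma~\ref{Gantisymmetric}) this expression is \emph{identically equal} to $-\mathbb{E}[\langle B(\rho_N^{\otimes 2}D^2F),\rho_N DG\rangle]$. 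Any telescoping in one form corresponds to a telescoping in the other, so your assertion that ``every $\rho_N-I$ stripped off by telescoping lands on the fixed tensor $D^2F$'' is wrong --- one residual necessarily carries $(\rho_N-I)DG$. The half-order $\mathcal{N}$-shift you then propose only redistributes weights between $F$ and $G$; it does not produce a rate where the assumptions give none. Moreover, invoking the $C_2$-assumption ``with $\varphi=D^2F$ and variable test in $\dot V$'' is precisely the duality-form estimate you had just dismissed, so your argument is circular at this point.
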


\begin{proof}
  By similar computations as in Lemma~\ref{lem:G-bounds} we obtain for $G \in
  \mathcal{C}$
  \begin{align}
    \mathbb{E} [(\mathcal{G}_+^{\rho_N} F -\mathcal{G}^{\rho_M}_+ F) G] & = 
    \mathbb{E} [\langle B (\rho_N^{\otimes 2}D^2 (1 +\mathcal{N})^{- 1} G) -
    B (\rho_M^{\otimes 2}D^2 (1 +\mathcal{N})^{- 1} G), \rho_N D (2 +\mathcal{N}) F
    \rangle_{\dot{V}^{\ast}, \dot{V}}] \nonumber\\
    &\quad + \mathbb{E} [\langle  B (\rho_M^{\otimes 2}D^2 (1 +\mathcal{N})^{- 1} G), (\rho_N - \rho_M) D (2 +\mathcal{N}) F
    \rangle_{\dot{V}^{\ast}, \dot{V}}] \nonumber \\
    & \lesssim  \mathbb{E} [(\varepsilon_N + \varepsilon_M) \| D^2 (1
    +\mathcal{N})^{- 1} G \|_{V \otimes_s H} C_1 (D (2 +\mathcal{N}) F)]
    \nonumber\\
    & \leqslant  (\varepsilon_N + \varepsilon_M) \| G \|_{\mathcal{H}^1_0}
    \mathbb{E} [C_1 (D (2 +\mathcal{N}) F)^2]^{1 / 2} \label{eq:G-approximation-pr1}
  \end{align}
  and analogously
  \begin{align*}
    \mathbb{E} [(\mathcal{G}_-^{\rho_N} F -\mathcal{G}^{\rho_M}_- F) G] &
    \lesssim  \mathbb{E} [(\varepsilon_N + \varepsilon_M) C_2 (D^2 F) \| D G
    \|_{\dot{V}}]\\
    & \leqslant  (\varepsilon_N + \varepsilon_M) \mathbb{E} [C_2 (D^2
    F)^2]^{1 / 2} \| G \|_{\dot{\mathcal{H}}^{1}_0},
  \end{align*}
  which shows that $(\mathcal{G}^{\rho_N} F)_N$ is Cauchy in $\mathcal{H}^{-
  1}_0$ and the limit does not depend on the specific approximation, it
  satisfies the bounds of Lemma~\ref{lem:G-bounds} and it is antisymmetric on
  cylinder functions.

  The convergence of $\langle : \tilde{B}^{\rho_N} (\eta, \eta) :, f \rangle_{V^{\ast}, V} = \mathcal{G}^{\rho_N} (\eta (f)) + \langle {:} \tilde{B}^{\rho_N} (\eta, \eta) {:}, f - \rho_N f \rangle_{V^{\ast}, V}$ is similar.
\end{proof}

{\color{red}
\begin{definition}
    We write
    \begin{equation}\label{eq:generator}
        \mathcal{L} := \mathcal{L}_0 + \mathcal{G}
    \end{equation}
    for the formal generator of the solution to~\eqref{eqn}. It boundedly maps $\mathcal{H}^1_\alpha$ to $\mathcal{H}^{-1}_{\alpha-1}$.
\end{definition}
}

\subsection{Energy solutions}\label{sec: energy solutions}

\begin{definition}[Energy solution]\label{def:energy} We make Assumptions (A) and (B).
  Let $(u_t (f))_{t \geqslant 0, f \in  S}$ be an adapted  stochastic process on some
  filtered probability space $(\Omega, \mathcal{F}, (\mathcal{F}_t)_{t\ge0}, \mathbb{P})$ satisfying the usual conditions. We call $u$ an energy
  solution to the equation
  \[ \partial_t u = - A u + {:} B (u, u) {:} + \sqrt{2 A} \xi, \]
  if the following conditions are satisfied:
  \begin{enumerateroman}
    \item Path regularity: For all $f \in  S$ the process $t \mapsto u_t (f)$
    is continuous.
    
    \item \label{Incompressibility}{\color{red}Uniform $L^2$ bound for the density:} $\sup_{t \leqslant T} |
    \mathbb{E} [F (u_t)] | \lesssim \| F \|_{L^2 (\mu)}$ for all $T > 0$ and $F \in \mathcal C$.
    
    \item \label{Energy estimate}Energy estimate: For all $F \in \mathcal{C}$
    and for all $T > 0$
    \begin{equation}\label{eq:energy-estimate}
        \mathbb{E} \left[ \sup_{t \leqslant T} \left| \int_0^t F (u_s) \mathd s\right| \right] \lesssim (T^{1 / 2} + T) \| F \|_{\mathcal{H}^{- 1}_0}.
    \end{equation}
    In particular, there exists a unique continuous extension to $\mathcal{H}^{- 1}_0$ of the map
    \[ I : \mathcal{C} \rightarrow \bigcap_{T>0} L^1 (\Omega, C ([0,T],
       \mathbb{R})), \qquad I (F)_t = \int_0^t F (u_s) \mathd s. \]
    \item Weak solution: For $f \in  S$ the process
    \begin{equation}\label{eq:weak-formulation}
        M^f_t \assign u_t (f) - u_0 (f) + \int_0^t u_s(A f) \mathd s - I (\langle {:} B (\cdummy, \cdummy) {:}, f
       \rangle_{V^{\ast}, V})_t, \qquad t \geqslant 0,
    \end{equation}
    is a continuous martingale with quadratic variation $[M^f]_t = t 2 \langle
    A f, f \rangle_H$, and there exist $p<2$ and a sequence $(\rho_N)_{N \in \mathbb N}\subset L(H,V)$ such that for all $T>0$ the following convergence holds in probability:
    \[
        \lim_{N \rightarrow \infty} \left\|I (\langle {:} B (\cdummy, \cdummy) {:}, f \rangle_{V^{\ast}, V})_{\cdot} - \int_0^\cdot \langle {:} \tilde B^{\rho_N} (u_s, u_s) {:}, f
       \rangle_{V^{\ast}, V} \mathd s \right\|_{[0,T],p-\mathrm{var}} = 0,
    \]
    where
    \[
        \| I \|_{[0,T],p-\mathrm{var}} = \sup \left\{ \left(\sum_{k=0}^{n-1} |I_{t_{k+1}} - I_{t_k}|^p\right)^{1/p}: n \in \mathbb N, 0=t_0 < \dots < t_n =T \right\}.
    \]
  \end{enumerateroman}
\end{definition}

In the formulation of the weak solution property iv. we used implicitly that for any $F\in L^2(\mu)$ the integral $\int_0^t F(u_s)\mathd s$ makes sense by the path regularity and by the uniform $L^2$ bound for the density of $u_t$: For $F \in \mathcal C$ this follows from path regularity, and for $F \in L^2(\mu)$ we can find $(F^N) \subset \mathcal{C}$ that converges to $F$ in $L^2(\mu)$, so that by the uniform $L^2$ bound for the density $(\int_0^\cdot F^N(u_s) \mathd s)_N$ is Cauchy in $L^1(\Omega, C([0,T],\mathbb R))$ and we define $\int_0^\cdot F(u_s) \mathd s$ as the limit (which does not depend on the approximating sequence). We also used that $\langle {:} B (\cdummy, \cdummy) {:}, f  \rangle_{V^{\ast}, V} = \mathcal G \eta(f) \in \mathcal H^{-1}_0$, which holds because $\eta(f) \in \mathcal H^1_1$.

\begin{remark}
    The uniform $L^2$ bound for the density and the energy estimate are formulated in a way that corresponds to energy solutions that at time $0$ have a distribution with density with respect to $\mu$ that is in $L^2(\mu)$. It would be possible to relax this to an $L^1$ density by replacing the control of moments by a control of probabilities, see \cite[Def.~13]{Graefner2024}.
\end{remark}

In applications to scaling limits of particle systems the following characterization of energy solutions is sometimes useful. 

{\color{red}
\begin{theorem}[Sufficient conditions for an energy solution]\label{lem:sufficient-energy} We make Assumptions (A) and (B) and additionally assume that $S$ is a core for the operator $A$. Any adapted stochastic process $(u_t (f))_{t \geqslant 0,
  f \in  S}$ that satisfies all of the following conditions is an energy solution in the sense of Definition~\ref{def:energy}:
  \begin{enumerateroman}
    \item Path regularity: For all $f \in  S$ the process $t \mapsto u_t (f)$
    is continuous.
    
    \item Stationarity: $u_t \sim \mu$ for all $t \geqslant 0$.

    \item Weak solution: There exist $p<2$ and a sequence $(\rho_N)_{N \in \mathbb N}\subset L(H,V)$ such that for all $f \in S$ there is a process $\mathcal B^f$ with
    \begin{align*}
        \lim_{N \rightarrow \infty}\mathbb{P}\left( \left\|\mathcal B^f - \int_0^\cdot \langle {:} \tilde B^{\rho_N} (u_s, u_s) {:}, f \rangle_{V^{\ast}, V} \mathd s \right\|_{[0,T],p-\mathrm{var}}>\varepsilon\right) & = 0, \qquad \varepsilon, T>0,\\
        \lim_{N\to \infty} \left\| \langle {:} B (\eta, \eta) {:}, f \rangle_{V^{\ast}, V} - \langle {:} \tilde B^{\rho_N} (\eta, \eta) {:}, f \rangle_{V^{\ast}, V}\right\|_{\mathcal{H}^{-1}_0}& = 0.
    \end{align*}
    Moreover, the process
    \[ M^f_t \assign u_t (f) - u_0 (f) + \int_0^t u_s(Af)\mathd s - \mathcal B^f_t, \qquad t \geqslant 0, \]
    is a continuous martingale with quadratic variation $[M^f]_t =  2t \langle
    A f, f \rangle_H$.
    
    \item Behavior under time-reversal: for $T > 0$ the processes $\hat{u}_t (f)
    = u_{T - t} (f)$, and $\hat{\mathcal B}^f_t = (\mathcal B^f_T - \mathcal B^f_{T-t})$, $t \in [0, T]$, are such that
    \[
        \hat{M}^f_t \assign \hat{u}_t (f) - \hat{u}_0 (f) + \int_0^t \hat{u}_s(Af)\mathd s + \hat{\mathcal B}^f_t, \qquad t \in [0,T],
    \]
    is a continuous martingale in the filtration generated by $\hat{u}$, with quadratic variation $[\hat{M}^f]_t =  2 t \langle  A f, f \rangle_H$.
  \end{enumerateroman}
  If the sequence $(\rho_N)_{N\in \mathbb N}$ is as in Assumption~(B), then in point iii. it suffices to verify uniform convergence on compacts of $\int_0^\cdot \langle {:} \tilde B^{\rho_N} (u_s, u_s) {:}, f \rangle_{V^{\ast}, V} \mathd s$ to $\mathcal B^f$ and that $\mathcal B^f$ is of vanishing quadratic variation. It is not necessary to verify the convergence of  $\langle {:} \tilde B^{\rho_N} (\eta, \eta) {:}, f \rangle_{V^{\ast}, V}$ to $\langle {:} B (\eta, \eta) {:}, f \rangle_{V^{\ast}, V}$.
\end{theorem}

\begin{proof}
    There are two statements which are not obvious. First, that i.-iv. imply an improved version of the energy estimate, which is shown in Lemma~\ref{lem:sufficient-energy-3} below, and which establishes that $u$ is an energy solution. The lemma does not actually require the full strength of iii., and in particular the only requirement on $\mathcal B^f$ is that it is of vanishing quadratic variation.

    Lemma~\ref{lem:sufficient-energy-3} then allows us to show the second claim: Assume that the sequence $(\rho_N)_{N\in \mathbb N}$ is as in Assumption~(B). Then the convergence of $\langle {:} \tilde B^{\rho_N} (\eta, \eta) {:}, f \rangle_{V^{\ast}, V}$ to $\langle {:} B (\eta, \eta) {:}, f \rangle_{V^{\ast}, V}$ in $\mathcal{H}^{-1}_0$ follows from Lemma~\ref{lem:G-approximation}. If in point iii. the process $\mathcal B^f$ is of vanishing quadratic variation and we have uniform convergence on compacts of $\int_0^\cdot \langle {:} \tilde B^{\rho_N} (u_s, u_s) {:}, f \rangle_{V^{\ast}, V} \mathd s$ to $\mathcal B^f$, we obtain the improved energy estimate~\eqref{eq:improved-energy} by Lemma~\ref{lem:sufficient-energy-3}. It remains to show that the convergence
    \[
        \int_0^t\cdot \langle {:} \tilde B^{\rho_N} (u_s, u_s) {:}, f \rangle_{V^{\ast}, V} \mathd s \to \mathcal B^f
    \]
    holds in $p$-variation for some $p<2$. By the Besov-variation embedding shown in Corollary A.3 of the appendix of \cite{Friz2010} it suffices to show that, with $I_{s,t} = I_t - I_s$ and for some $p<2$,
    \begin{equation}\label{eq:energy-alternative-pr1}
        \lim_{N\to \infty} \int_{[0,T]^2}\frac{\mathbb E[|I(\langle {:} \tilde B^{\rho_N} (\cdot, \cdot) {:}, f \rangle_{V^{\ast}, V} - \langle {:} B (\cdummy, \cdummy) {:}, f \rangle_{V^{\ast}, V})_{s,t}|^2]}{|t-s|^{1+2/p}}\mathd s \mathd t =0.
    \end{equation}
    Shifting \eqref{eq:improved-energy} to the interval $[s,t]$ by stationarity and ignoring the higher order contribution $(t-s)^2$ because we are interested in short length scales, we obtain from the improved energy estimate and \eqref{eq:B-tilde-speed} for all $M \in \mathbb N$
    \[
        \mathbb E[|I(\langle {:} \tilde B^{\rho_M} (\cdot, \cdot) {:}, f \rangle_{V^{\ast}, V} - \langle {:} B (\cdummy, \cdummy) {:}, f \rangle_{V^{\ast}, V})_{s,t}|^2] \lesssim \varepsilon_M |t-s|.
    \]
    Let $\kappa = \tfrac{1}{1+\alpha}$, where $\|\rho_N\|_{L(H,V)} \lesssim \varepsilon_N^{-\alpha}$. If $\varepsilon_N \leqslant|t-s|^\kappa$, we take $M=N$. If $\varepsilon_N > |t-s|^\kappa$ we take $M>N$ such that $\varepsilon_M \simeq |t-s|^\kappa$ and obtain from \eqref{eq:improved-energy} together with~\eqref{eq:G-rho-bound}, writing $I, I^N, I^M$ for brevity,
    \[
        \mathbb E[|I_{s,t} - I^N_{s,t}|^2] \lesssim \mathbb E[|I_{s,t} - I^M_{s,t}|^2] +  \mathbb E[|I^N_{s,t}|^2] + \mathbb E[|I^M_{s,t}|^2] \lesssim \varepsilon_M |t-s| + \varepsilon_M^{-\alpha}|t-s|^2 \simeq |t-s|^{1+\kappa}.
    \]
    Interpolating this with the bound $\mathbb E[|I_{s,t} - I^N_{s,t}|^2] \lesssim \varepsilon_N |t-s|$, we obtain that \eqref{eq:energy-alternative-pr1} holds for all $p>2/(1+\kappa)$.  
\end{proof}
The proof used the following lemma, which might be useful in its own right.

\begin{lemma}\label{lem:sufficient-energy-3} We make Assumption (A) and additionally assume that $S$ is a core for $A$. Let $(u_t (f))_{t \geqslant 0, f \in  S}$ be an adapted process which satisfies conditions i., ii., iv. from Theorem~\ref{lem:sufficient-energy} and also
\begin{itemize}
    \item[iii.'] For each $f\in S$ there exists a process $\mathcal B^f$ of vanishing quadratic variation such that
    \[ M^f_t \assign u_t (f) - u_0 (f) + \int_0^t u_s(Af)\mathd s - \mathcal B^f_t, \qquad t \geqslant 0, \]
    is a continuous martingale with quadratic variation $[M^f]_t =  2t \langle
    A f, f \rangle_H$.
\end{itemize}
Then $u$ satisfies the improved energy estimate
    \begin{equation}\label{eq:improved-energy}
        \mathbb{E} \left[ \sup_{t \leqslant T} \left| \int_0^t F (u_s) \mathd s
           \right|^2 \right] \lesssim (T + T^2) \| F \|_{\mathcal{H}^{- 1}_0}^2,\qquad F\in \mathcal C.
    \end{equation}
\end{lemma}

\begin{proof}
For $F \in \mathcal{C}$ we obtain from the It{\^o} trick (we can
    apply It{\^o}'s formula for Dirichlet processes by the assumption that the
    drift has vanishing quadratic variation, together with the usual time reversal argument as in~\cite{Gubinelli2018Energy}) that
    \begin{equation}\label{eq:sufficient-energy-3-pr} \mathbb{E} \left[ \sup_{t \leqslant T} \left| \int_0^t \mathcal{L}_0 F
       (u_s) \mathd s \right|^2 \right] \lesssim T \| F \|_{\mathcal{H}^1_0}^2.
    \end{equation}

By Lemma~\ref{lem:core} in the appendix, $\mathcal{C}$ is a core for $\mathcal{L}_0$. Therefore, the bound~\eqref{eq:sufficient-energy-3-pr} extends to all $F \in \mathcal{D} (\mathcal{L}_0)$: Let $(F^N)_N \subset
    \mathcal{C}$ be such that $F^N \rightarrow F$ and $\mathcal{L}_0 F^N
    \rightarrow \mathcal{L}_0 F$. Then also
    \[ \| F - F^N \|_{\mathcal{H}^1_0}^2 \simeq \mathbb{E} [(F - F^N) (1
       -\mathcal{L}_0) (F - F^N)] \leqslant \| F - F^N \| \cdot \| (1
       -\mathcal{L}_0) (F - F^N) \| \rightarrow 0, \]
    and thus by stationarity
    \begin{align*}
      \mathbb{E} \left[ \sup_{t \leqslant T} \left| \int_0^t \mathcal{L}_0 F
      (u_s) \mathd s \right|^2 \right] & =  \lim_{N \rightarrow \infty}
      \mathbb{E} \left[ \sup_{t \leqslant T} \left| \int_0^t \mathcal{L}_0 F^N
      (u_s) \mathd s \right|^2 \right] \lesssim  \lim_{N \rightarrow \infty} T \| F^N
      \|_{\mathcal{H}^1_0}^2 =  T \| F \|_{\mathcal{H}^1_0}^2 .
    \end{align*}

Let now $F \in \mathcal C$ and let $(1 -\mathcal{L}_0) G = F$,
    so $G \in \mathcal{D} (\mathcal{L}_0)$. Then
    \begin{align*}
      \mathbb{E} \left[ \sup_{t \leqslant T} \left| \int_0^t F (u_s) \mathd s
      \right|^2 \right] & \lesssim \mathbb{E} \left[ \sup_{t \leqslant T}
      \left| \int_0^t G (u_s) \mathd s \right|^2 \right] +\mathbb{E} \left[
      \sup_{t \leqslant T} \left| \int_0^t \mathcal{L}_0 G (u_s) \mathd s
      \right|^2 \right]\\
      & \lesssim T^2 \| G \|^2 + T \| G \|_{\mathcal{H}^1_0}^2,
    \end{align*}
    and clearly $\| G \| \leqslant \| G \|_{\mathcal{H}^1_0}$, while $\| G \|_{\mathcal{H}^{1}_0} \leqslant \| F \|_{\mathcal{H}^{- 1}_0}$ because
    \[ \| G \|_{\mathcal{H}^1_0}^2 \simeq \mathbb{E} [G (1 -\mathcal{L}_0) G]
       =\mathbb{E} [G F] \leqslant \| G \|_{\mathcal{H}^1_0} \| F
       \|_{\mathcal{H}^{- 1}_0}. \]
    This proves the claimed energy estimate~\eqref{eq:improved-energy}.
\end{proof}
}
\section{Energy solutions and semigroup}\label{sec: Energy solutions and semigroup}

\subsection{Construction of the semigroup and resolvent} \label{sec: Construction of the semigroup and resolvent}

\begin{proposition}
  \label{constructioncandidatesemigroup}Let $\mathcal{G}^N =
  \tmmathbf{1}_{\mathcal{N} \leqslant N} \mathcal{G} \tmmathbf{1}_{\mathcal{N}
  \leqslant N}$. The following statements hold:
  \begin{enumeratenumeric}
    \item \label{existenceresolventN}For every $N{\in}\mathbb{N}$  and $F^{\sharp} \in
    \mathcal{H}^{- 1}_0$ there is a unique solution $F^N \backassign (1 -
    \mathcal{L}_0 - \mathcal{G}^N)^{- 1} F^{\sharp} \in \mathcal{H}^1_0$ to
    the resolvent equation
    \begin{equation}
      (1 - \mathcal{L}_0 - \mathcal{G}^N) F^N = F^{\sharp} .
      \label{resolventequationN}
    \end{equation}
    \item Furthermore, the following bound holds uniformly in $N$
    \begin{equation}
      {\| (1 - \mathcal{L}_0 - \mathcal{G}^N)^{- 1} F^{\sharp}
      \|_{\mathcal{H}_{ 0}^1}}  \lesssim {\| F^{\sharp} \|_{\mathcal{H}_{
      0}^{- 1}}}  .
    \end{equation}
    \item \label{limitresovlentequation}There exists $\mathcal{R}_1 \in L
    (\mathcal{H}^{- 1}_0, \mathcal{H}^1_0)$ such that for every $F^{\sharp}
    \in \mathcal{H}^{- 1}_0$
    \begin{equation}
      (1 - \mathcal{L}_0 - \mathcal{G}^N)^{- 1} F^{\sharp} \rightarrow
      \mathcal{R}_1 F^{\sharp},
    \end{equation}
    weakly in $\mathcal{H}^1_0$ and $F = \mathcal{R}_1 F^{\sharp} \in
    \mathcal{H}^1_0$ is a solution to the resolvent equation
    \[ (1 - \mathcal{L}_0 - \mathcal{G}) F = F^{\sharp} . \]
    \item With $\mathcal{D} = \mathcal{R}_1 L^2 (\mu) \subset
    \mathcal{D}_{\max} \assign \{ F \in \mathcal{H}^1_0 : \mathcal{L} F \in
    L^2 (\mu) \} \subset \mathcal{H}^1_0$ it holds that $(\mathcal{D},
    \mathcal{L})$ generates a strongly continous contraction semigroup
    $(P_t)_{t \geqslant 0}$ on $L^2 (\mu)$.
  \end{enumeratenumeric}
\end{proposition}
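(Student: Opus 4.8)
The plan is to obtain $(P_t)$ from the Lumer--Phillips theorem, using the operator $\mathcal R_1$ built in the first three parts as the resolvent $(1-\mathcal L)^{-1}$ at $\lambda=1$, where $\mathcal L=\mathcal L_0+\mathcal G$. For the fixed-$N$ statement I would work with the bilinear form $a_N(F,G):=\langle(1-\mathcal L_0-\mathcal G^N)F,G\rangle_{\mathcal H^{-1}_0,\mathcal H^1_0}$ on $\mathcal H^1_0\times\mathcal H^1_0$. Because $\mathbf{1}_{\mathcal N\leqslant N}$ projects onto finitely many chaoses, on which all the norms $\|\cdot\|_{\mathcal H^1_\alpha}$ are equivalent, $\mathcal G^N$ is bounded $\mathcal H^1_0\to\mathcal H^{-1}_0$ (with an $N$-dependent constant), so $a_N$ is bounded by Lemma~\ref{lem:L0-bounds}. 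Coercivity is the decisive point: since $\mathbf{1}_{\mathcal N\leqslant N}$ is a self-adjoint projection and $\mathcal G=\mathcal G_++\mathcal G_-$ is antisymmetric (Lemma~\ref{Gantisymmetric}), the cut-off operator $\mathcal G^N$ is antisymmetric on $L^2(\mu)$, so $\langle\mathcal G^N F,F\rangle=0$ and $a_N(F,F)=\langle(1-\mathcal L_0)F,F\rangle\gtrsim\|F\|_{\mathcal H^1_0}^2$ by the lower bound in Lemma~\ref{lem:L0-bounds}. The non-symmetric Lax--Milgram lemma then gives the unique $F^N\in\mathcal H^1_0$ solving~\eqref{resolventequationN}. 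Testing this equation against $F^N$ itself and again using $\langle\mathcal G^N F^N,F^N\rangle=0$ yields $\|F^N\|_{\mathcal H^1_0}^2\lesssim\langle(1-\mathcal L_0)F^N,F^N\rangle=\langle F^\sharp,F^N\rangle\leqslant\|F^\sharp\|_{\mathcal H^{-1}_0}\|F^N\|_{\mathcal H^1_0}$, which is the bound of part~2 uniformly in $N$.

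For part~3 the uniform bound makes $(F^N)_N$ bounded in the Hilbert space $\mathcal H^1_0$; moreover, reading $\mathcal G^N F^N=(1-\mathcal L_0)F^N-F^\sharp$ off the equation shows $(\mathcal G^N F^N)_N$ is bounded in $\mathcal H^{-1}_0$. Along a subsequence $F^{N_k}\rightharpoonup F$ in $\mathcal H^1_0$, and I would identify $F$ by passing to the limit in the weak form tested against a cylinder function $G$: writing $\langle F^\sharp,G\rangle=\langle F^{N_k},(1-\mathcal L_0+\mathcal G^{N_k})G\rangle$ (symmetry of $\mathcal L_0$, antisymmetry of $\mathcal G^{N_k}$), and using that for fixed cylinder $G$ one has $\mathcal G^{N}G=\mathbf{1}_{\mathcal N\leqslant N}\mathcal G G\to\mathcal G G$ strongly in $\mathcal H^{-1}_0$ (here $\mathcal G G\in\mathcal H^{-1}_0$ by Lemma~\ref{lem:G-bounds} with $\alpha=1$, and the spectral cut-offs converge strongly), the weak--strong pairing passes to the limit to give $\langle F,(1-\mathcal L_0+\mathcal G)G\rangle=\langle F^\sharp,G\rangle$ for all cylinder $G$; that is, $F$ solves $(1-\mathcal L_0-\mathcal G)F=F^\sharp$ in the weak (antisymmetric) sense, and weak lower semicontinuity of the norm gives $\|F\|_{\mathcal H^1_0}\lesssim\|F^\sharp\|_{\mathcal H^{-1}_0}$.

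The delicate point, and the step I expect to be the main obstacle, is that this weak limit is \emph{unique}, so that the full sequence converges and $\mathcal R_1 F^\sharp:=F$ is a well-defined linear map. This cannot be obtained by a soft coercivity argument, because $\mathcal G$ only maps $\mathcal H^1_0$ into the weaker space $\mathcal H^{-1}_{-1}$ (Lemma~\ref{lem:G-bounds} with $\alpha=0$), so $\langle\mathcal G F,F\rangle$ is not directly controlled and the naive Cauchy estimate for $F^N-F^M$ fails, since $(\mathcal G^N-\mathcal G^M)F^M$ does not vanish in $\mathcal H^{-1}_0$. I would instead exploit the graded structure of $\mathcal G$: by the commutation relations $[\mathcal N,\mathcal G_\pm]=\pm\mathcal G_\pm$ from Lemma~\ref{lem:G-bounds}, $\mathcal G_+$ and $\mathcal G_-$ raise and lower the chaos index by one, so in the chaos decomposition $F=\sum_n F_n$ the resolvent equation becomes a tridiagonal system $(1-\mathcal L_0)F_n-\mathcal G_+F_{n-1}-\mathcal G_-F_{n+1}=F^\sharp_n$ with coercive diagonal. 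Propagating the coercive estimate across grades, with the commutator bound controlling the off-diagonal coupling, shows this system has a unique $\mathcal H^1_0$ solution; hence every subsequential weak limit coincides, the full sequence converges weakly, and $\mathcal R_1\in L(\mathcal H^{-1}_0,\mathcal H^1_0)$.

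For part~4 I would verify the Lumer--Phillips hypotheses for $(\mathcal D,\mathcal L)$ with $\mathcal D=\mathcal R_1 L^2(\mu)$. For $F=\mathcal R_1 F^\sharp$ with $F^\sharp\in L^2(\mu)\subset\mathcal H^{-1}_0$ we have $\mathcal L F=F-F^\sharp\in L^2(\mu)$, so $\mathcal D\subset\mathcal D_{\max}$ and $(1-\mathcal L)\colon\mathcal D\to L^2(\mu)$ is a bijection with bounded inverse $\mathcal R_1|_{L^2(\mu)}$ (bounded into $L^2(\mu)$ through $\mathcal H^1_0\hookrightarrow L^2(\mu)$); a bijection with bounded inverse is closed, giving closedness of $(\mathcal D,\mathcal L)$ and the range condition $(1-\mathcal L)\mathcal D=L^2(\mu)$. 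Dissipativity I would again read off the approximation rather than from $\langle\mathcal G F,F\rangle$: from $\langle(1-\mathcal L_0)F^N,F^N\rangle=\langle F^\sharp,F^N\rangle$ and weak lower semicontinuity of the coercive quadratic form $G\mapsto\|G\|_{L^2(\mu)}^2+\mathbb{E}[\langle A\,DG,DG\rangle_H]$ I obtain $\|F\|_{L^2(\mu)}^2\leqslant\langle(1-\mathcal L_0)F,F\rangle\leqslant\langle F^\sharp,F\rangle=\langle(1-\mathcal L)F,F\rangle_{L^2(\mu)}$, i.e.\ $\langle\mathcal L F,F\rangle_{L^2(\mu)}\leqslant0$. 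Density of $\mathcal D$ follows from an adjoint argument: the same construction applied to $-\mathcal G$ produces $\tilde{\mathcal R}_1=(1-\mathcal L_0+\mathcal G)^{-1}$, and a duality computation gives $\mathcal R_1^\ast=\tilde{\mathcal R}_1$ on $L^2(\mu)$; as $\tilde{\mathcal R}_1$ is injective, $\mathcal D^\perp=\ker\mathcal R_1^\ast=\{0\}$. Density, dissipativity and the range condition together let Lumer--Phillips conclude that $(\mathcal D,\mathcal L)$ generates a strongly continuous contraction semigroup $(P_t)_{t\geqslant0}$ on $L^2(\mu)$.
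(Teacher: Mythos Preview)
Your proposal is correct and follows the same route as the paper: Lax--Milgram with antisymmetry of $\mathcal G^N$ for parts~1--2, weak compactness plus testing against cylinder functions (moving $\mathcal G^N$ to the test function by antisymmetry and using $\mathcal G^N G\to\mathcal G G$ in $\mathcal H^{-1}_0$) for part~3, and Lumer--Phillips for part~4. The paper itself defers parts~3 and~4 almost entirely to \cite{perkowski2024energy,grafner2023energy}; the uniqueness obstacle you flag is exactly what those references handle, and the graded/tridiagonal argument you sketch (exploiting $[\mathcal N,\mathcal G_\pm]=\pm\mathcal G_\pm$ and the commutator bound) is precisely their method---it is the same mechanism that the paper invokes again in Lemma~\ref{corepropertycyl}. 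Your dissipativity argument via weak lower semicontinuity of $F\mapsto\langle(1-\mathcal L_0)F,F\rangle$ along the approximating sequence, and the density argument via the adjoint resolvent $\tilde{\mathcal R}_1=(1-\mathcal L_0+\mathcal G)^{-1}$, are also in line with what those references do.
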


\begin{proof}
  \begin{enumeratenumeric}
    \item We want to apply the Lax-Milgram theorem, see \cite[sec.~6.21]{evans2022partial}. For this purpose, we consider the bilinear
    operator $K^N : \mathcal{H}^1_0 \times \mathcal{H}^1_0 \rightarrow
    \mathbb{R}$ given by
    \begin{equation}
      K^N (\cdot, \cdot) \assign \langle (1 -\mathcal{L}_0 -\mathcal{G}^N)
      \cdot, \cdot \rangle_{ \mathcal{H}^{- 1}_0,\mathcal{H}^1_0} .
    \end{equation}
    By Lemmas~\ref{lem:L0-bounds} and~\ref{lem:G-bounds} (bounds on
    $\mathcal{L}_0$ and $\mathcal{G}$) the operator $(1 -\mathcal{L}_0
    -\mathcal{G}^N)$ continuously maps $\mathcal{H}^1_0$ to $\mathcal{H}^{-
    1}_0$ and
    \begin{align}
      K^N (F, G) & =  \langle (1 -\mathcal{L}_0) F, \mathcal{} G
      \rangle_{\mathcal{H}^1_0, \mathcal{H}^{- 1}_0 } - \langle \mathcal{G}^N
      F, \mathcal{} G \rangle_{\mathcal{H}^1_0, \mathcal{H}^{- 1}_0}
      \nonumber\\
      & \leqslant  \| F \|_{\mathcal{H}^1_0} \| G \|_{\mathcal{H}^1_0}
      +\|\mathcal{G}^N F\|_{\mathcal{H}^1_0} \| \mathcal{} G\|_{\mathcal{H}^{-
      1}_0 } \nonumber\\
      & \lesssim_N  (\| F \|_{\mathcal{H}^1_0} + \|
      \tmmathbf{1}_{\mathcal{N} \leqslant n} F \|_{\mathcal{H}^1_0}) \| G
      \|_{\mathcal{H}^1_0} \nonumber\\
      & \lesssim  \| F \|_{\mathcal{H}^1_0} {\| G \|_{\mathcal{H}^1_0}} .
    \end{align}
    Moreover, by Lemma~\ref{lem:L0-bounds} (bounds on $\mathcal{L}_0$) and the
    fact that $\mathcal{G}^N$ is antisymmetric, we have for every $F \in
    \mathcal{H}^1_0  \mathcal{}$
    \begin{align}
      K^N (F, F)  & =  \langle (1 -\mathcal{L}_0 -\mathcal{G}^N) F,
      \mathcal{} F \rangle_{\mathcal{H}^1_0, \mathcal{H}^{- 1}_0} \nonumber\\
      & =  \langle (1 -\mathcal{L}_0) \mathcal{} F, F
      \rangle_{\mathcal{H}^1_0, \mathcal{H}^{- 1}_0} \nonumber\\
      & \simeq  \| F \|_{\mathcal{H}^1_0}^2 .  \label{prepbilinear}
    \end{align}
    Therefore, $K^N$ satisfies the conditions of the Lax-Milgram theorem, and
    given $F^{\sharp} \in \mathcal{H}^{- 1}_0$, there exists a unique $F^N \in
    \mathcal{H}^1_0$ such that
    \begin{equation}
      K^N (F^N, G) = \langle F^{\sharp}, G \rangle_{\mathcal{H}^{- 1}_0,
      \mathcal{H}^1_0}, \qquad G \in \mathcal{H}^1_0 .
    \end{equation}
    This concludes the proof of \ref{existenceresolventN}.
    
    \item By eq. (\ref{prepbilinear}) we have
    \begin{equation}
      \| F^{\sharp} \|_{\mathcal{H}^{- 1}_0} \| F^N \|_{\mathcal{H}^1_0}
      \geqslant \langle F^{\sharp}, F^N \rangle_{\mathcal{H}^{- 1}_0,
      \mathcal{H}^1_0} = K^N (F^N, F^N ) \simeq \| F^N \|_{\mathcal{H}^1_0}^2,
    \end{equation}
    uniformly in $N$, so that
    \begin{equation}
      \| F^N \|_{\mathcal{H}^1_0} \lesssim \| F^{\sharp} \|_{\mathcal{H}^{-
      1}_0} .
    \end{equation}
    \item With the weak convergence, this follows by testing against $G \in
    \mathcal{C}$ and applying the dominated convergence theorem and using that
    $G \in \mathcal{H}^1_1$ to obtain $\| (\mathcal{G} - \mathcal{G}^N) G
    \|_{\mathcal{H}^{- 1}_0} \rightarrow 0$. See
    \cite[Prop.~1.2]{perkowski2024energy} or \cite[Prop.~4.9]{grafner2023energy} for details.
    
    \item This is due to~\cite{Graefner2024SPDE} and it follows from the Lumer-Pillips theorem. See \cite[Prop.~1.2]{perkowski2024energy} or \cite[Prop.~4.9]{grafner2023energy}
    for details. \qedhere
  \end{enumeratenumeric}
\end{proof}

\begin{lemma}
  \label{corepropertycyl}It holds that
  \[ \langle (1 -\mathcal{L}) F, F \rangle_{L^2 (\mu)} \simeq \| F
     \|_{\mathcal{H}^1_0}^2, \qquad F \in \mathcal{D}_{\max} = \{ F \in
     \mathcal{H}^1_0 : \mathcal{L} F \in L^2 (\mu) \} \subset \mathcal{H}^1_0
     . \]
  Consequently, $(1 -\mathcal{L})$ is injective on $\mathcal{D}_{\max}$,
  \[ (1 - \mathcal{L}) \mathcal{C} \subset \mathcal{H}^{- 1}_0, \]
  is dense in $\mathcal{H}^{- 1}_0$ and the domain $\mathcal{D}$ from the
  previous Proposition \ref{constructioncandidatesemigroup} is equal to the
  maximal domain $\mathcal{D}_{\max}$.
\end{lemma}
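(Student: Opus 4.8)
The plan is to establish the two-sided bound $\langle (1-\mathcal{L})F, F\rangle_{L^2(\mu)} \simeq \|F\|_{\mathcal{H}^1_0}^2$ first, and then to deduce the stated structural consequences (injectivity, density of the range, and $\mathcal{D}=\mathcal{D}_{\max}$) from it together with the resolvent construction of Proposition~\ref{constructioncandidatesemigroup}. For the equivalence, I would decompose $\mathcal{L}=\mathcal{L}_0+\mathcal{G}$ and use that $\mathcal{G}$ is antisymmetric on cylinder functions (Lemma~\ref{Gantisymmetric}, extended in Lemma~\ref{lem:G-approximation}), so that $\langle \mathcal{G}F, F\rangle_{L^2(\mu)}=0$. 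This reduces the pairing to $\langle (1-\mathcal{L}_0)F, F\rangle_{L^2(\mu)}$, for which the lower bound $\gtrsim \|F\|_{\mathcal{H}^1_0}^2$ is exactly the content of~\eqref{eq:L0-lower} in Lemma~\ref{lem:L0-bounds}, while the matching upper bound follows from the upper bound in the same lemma combined with Cauchy--Schwarz, $\langle(1-\mathcal{L}_0)F,F\rangle \leqslant \|(1-\mathcal{L}_0)F\|_{\mathcal{H}^{-1}_0}\|F\|_{\mathcal{H}^1_0}\lesssim \|F\|_{\mathcal{H}^1_0}^2$. The subtle point here is that for $F\in\mathcal{D}_{\max}$ the antisymmetry of $\mathcal{G}$ must still be available; I would argue it extends from cylinder functions to $\mathcal{D}_{\max}\subset\mathcal{H}^1_0$ by density and the $\mathcal{H}^1_0\to\mathcal{H}^{-1}_0$ continuity of $\mathcal{G}$, so that $\langle\mathcal{G}F,F\rangle_{L^2(\mu)}$ continues to vanish by approximation.

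Given the equivalence, injectivity of $(1-\mathcal{L})$ on $\mathcal{D}_{\max}$ is immediate: if $(1-\mathcal{L})F=0$ then $\|F\|_{\mathcal{H}^1_0}^2\lesssim\langle(1-\mathcal{L})F,F\rangle_{L^2(\mu)}=0$, forcing $F=0$. For the inclusion $(1-\mathcal{L})\mathcal{C}\subset\mathcal{H}^{-1}_0$, I would note that $\mathcal{L}_0$ and $\mathcal{G}$ both map cylinder functions continuously into $\mathcal{H}^{-1}_0$ by Lemmas~\ref{lem:L0-bounds} and~\ref{lem:G-bounds}, so $(1-\mathcal{L})F\in\mathcal{H}^{-1}_0$ for $F\in\mathcal{C}$.

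For density of $(1-\mathcal{L})\mathcal{C}$ in $\mathcal{H}^{-1}_0$, the natural route is a duality/orthogonality argument: since $\mathcal{H}^{-1}_0$ is the dual of $\mathcal{H}^1_0$ under the $L^2(\mu)$ pairing, it suffices to show that any $G\in\mathcal{H}^1_0$ annihilating $(1-\mathcal{L})\mathcal{C}$ must vanish. Testing $\langle(1-\mathcal{L})F,G\rangle_{L^2(\mu)}=0$ against all $F\in\mathcal{C}$ means $G$ is a weak solution of the adjoint resolvent equation $(1-\mathcal{L}^\ast)G=0$; using antisymmetry of $\mathcal{G}$, the adjoint of $\mathcal{L}$ is $\mathcal{L}_0-\mathcal{G}$, which satisfies the same coercive lower bound, so the same argument forces $G=0$. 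Finally, for $\mathcal{D}=\mathcal{D}_{\max}$ I would use that $\mathcal{D}=\mathcal{R}_1 L^2(\mu)$ already solves $(1-\mathcal{L})F=F^\sharp$ for every $F^\sharp\in L^2(\mu)\subset\mathcal{H}^{-1}_0$, so $(1-\mathcal{L})$ maps $\mathcal{D}$ onto $L^2(\mu)$; since $\mathcal{D}\subset\mathcal{D}_{\max}$ and $(1-\mathcal{L})$ is injective on $\mathcal{D}_{\max}$ by the equivalence, any $F\in\mathcal{D}_{\max}$ with $(1-\mathcal{L})F=:F^\sharp\in L^2(\mu)$ must coincide with $\mathcal{R}_1 F^\sharp\in\mathcal{D}$, giving $\mathcal{D}_{\max}\subset\mathcal{D}$ and hence equality.

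The main obstacle I anticipate is rigorously justifying the vanishing of $\langle\mathcal{G}F,F\rangle_{L^2(\mu)}$ for general $F\in\mathcal{D}_{\max}$ rather than merely for cylinder functions, since the antisymmetry was proven only on $\mathcal{C}$ and the extension of $\mathcal{G}$ to $\mathcal{H}^1_0$ is valued in $\mathcal{H}^{-1}_0$, not in $L^2(\mu)$; one must check that the $L^2$ pairing $\langle\mathcal{G}F,F\rangle_{L^2(\mu)}$ is well-defined and stable under $\mathcal{H}^1_0$-approximation by cylinder functions, which is precisely where the self-consistency of the function-space setup is tested.
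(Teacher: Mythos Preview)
Your overall strategy—reduce to $\mathcal{L}_0$ via antisymmetry of $\mathcal{G}$, then deduce injectivity, density, and $\mathcal{D}=\mathcal{D}_{\max}$—is natural, but the step you flag as the ``main obstacle'' is in fact a genuine gap, and your proposed resolution does not work. You write that the antisymmetry extends ``by density and the $\mathcal{H}^1_0\to\mathcal{H}^{-1}_0$ continuity of $\mathcal{G}$,'' but Lemma~\ref{lem:G-bounds} does \emph{not} give this continuity: it yields only $\|\mathcal{G}F\|_{\mathcal{H}^{-1}_{\alpha-1}}\lesssim\|F\|_{\mathcal{H}^1_\alpha}$, so $\mathcal{G}$ maps $\mathcal{H}^1_0$ into $\mathcal{H}^{-1}_{-1}$, not $\mathcal{H}^{-1}_0$. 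This loss of one $(1+\mathcal{N})$-weight is the whole difficulty. Concretely, if $F_n\in\mathcal{C}$ with $F_n\to F$ in $\mathcal{H}^1_0$, then $\langle\mathcal{G}F_n,F_n\rangle=0$, but one cannot pass to the limit: $\mathcal{G}F_n\to\mathcal{G}F$ only in $\mathcal{H}^{-1}_{-1}$, which pairs with $\mathcal{H}^1_1$, while $F$ is merely in $\mathcal{H}^1_0$; and $\|\mathcal{G}F_n\|_{\mathcal{H}^{-1}_0}\lesssim\|F_n\|_{\mathcal{H}^1_1}$ is not controlled. The same loss obstructs your orthogonality argument for density of $(1-\mathcal{L})\mathcal{C}$: testing $\langle(1-\mathcal{L})F,G\rangle=0$ and trying to set $F\to G$ runs into the identical problem.

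The paper's proof (delegated to \cite{perkowski2024energy,grafner2023energy}) closes this gap precisely through the commutator estimates: $[\mathcal{L}_0,\mathcal{N}]=0$ from Lemma~\ref{commutatorL0N} and $\|[\mathcal{N},\mathcal{G}]F\|_{\mathcal{H}^{-1}_{\alpha-1}}\lesssim\|F\|_{\mathcal{H}^1_\alpha}$ from Lemma~\ref{lem:G-bounds}. These let one transfer $(1+\mathcal{N})$-weights across $\mathcal{L}$ at controlled cost, which is exactly what is needed to recover the missing weight and make the approximation or duality argument go through. Without invoking this mechanism (or an equivalent way to compensate the $\mathcal{H}^1_0\to\mathcal{H}^{-1}_{-1}$ loss), your plan does not reach the conclusion for general $F\in\mathcal{D}_{\max}$.
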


\begin{proof}
  The proof is based on the commutator estimate (\ref{commutatorestimate1})
  and can be found in e.g.
  \cite[Thm.~1.4]{perkowski2024energy} or \cite[Thm.~4.11]{grafner2023energy}.
\end{proof}

\subsection{Construction of energy solutions}\label{sec: Construction of energy solutions}
In this section we assume that there exists a complete orthonormal system
$(e_n)_{n \in \mathbb{N}} \subset V$ with respect to $H$ of eigenvectors for $A$ with an increasing sequence of eigenvalues $(\lambda_n)_{n \in \mathbb{N}}$, which morally corresponds to the case that $V$ is compactly embedded in $H$. This assumption could be bypassed by a more complex approximation argument, involving a natural discretization of the spectrum of $A$ that leads to an $H$-bounded perturbation of this operator, but for simplicity we restrict our attention to the special case where $A$ can be diagonalized. Note that $\lambda_n \geqslant 0$ for all $n \in \mathbb{N}$ because $\langle A f, f \rangle_H
\geqslant c \| f \|_{\dot{V}}^2$. 
We also assume that $ S$ is a nuclear Fréchet space.

Let $N \in \mathbb{N}$. Consider the projected SDE corresponding to equation (\ref{eqn}), for $k = 1, \ldots, N$,
\begin{equation}\label{eq:SDE} \mathd u^N_t (e_k) = \left( - \lambda_k u^N_t (e_k) + \langle {:}B^N(u^N_t,u^N_t){:}, e_k\rangle_{V^\ast, V} \right) \mathd t + \sqrt{2
   \lambda_k} \mathd W^k_t, \end{equation}
where $(W^k)_k$ are i.i.d. Brownian motions and
\[
  {:}B^N (u, u){:} \assign \sum_{k, \ell, m \leqslant N} (u (e_{\ell}) u (e_m) -
     \delta_{\ell, m}) \langle B (e_{\ell}, e_m), e_k \rangle_{V^{\ast}, V} e_k. 
\]
We claim that an invariant probability measure is
\[
    (u^N_k)^N_{k = 1} : = (u^N (e_k))_{k = 1}^N \sim \mu_N =\mathcal{N} (0,\mathbb{I}_{N \times N}) .
\]
Indeed, the generator of the SDE is
\[ \mathcal{L}^N = \sum_{k = 1}^N \left( \lambda_k (- u^N_k
   \partial_k + \partial_{k k}) + \sum_{\ell, m \leqslant N} (u^N_{\ell} u^N_m
   - \delta_{\ell, m}) \langle B (e_{\ell}, e_m), e_k \rangle_{V^{\ast}, V}
   \partial_k \right) . \]
It can be split up into a symmetric part
\[ \mathcal{L}_0^N = \sum_{k = 1}^N \lambda_k (- u^N_k
   \partial_k + \partial_{k k}), \]
which is the generator of an Ornstein-Uhlenbeck process in $N$ dimensions and
hence preserves $\mu_N$, whereas by analogous reasoning as in Lemma \ref{Gantisymmetric} the contribution of the drift
\[ \mathcal{G}^N = \sum_{k, \ell, m \leqslant N} (u^N_{\ell}
   u^N_m - \delta_{\ell, m})  \langle B (e_{\ell}, e_m), e_k \rangle_{V^{\ast},
   V} \partial_k, \]
is antisymmetric and hence the full generator $\mathcal{L}^N$ has $\mu_N$ as
an invariant measure. We start $u^N$ in its invariant measure (and discuss relaxed assumptions later), and we test against $f \in H$ via
\[ u^N_t (f) = \sum_{k = 1}^N u^N_t (e_k) \langle e_k, f \rangle_H . \]

Let us first derive an energy estimate for $u^N$, uniformly in $N$. We define the projection
\begin{equation}
    \Pi_N h = \sum_{k=1}^N \langle h,e_k\rangle_H e_k,
\end{equation}
and for a cylinder function $F(\eta)=\Phi(\eta(f))$ we define a new cylinder function $\Pi_N F(\eta) = \Phi(\eta(\Pi_N f))$.

\begin{lemma}[It{\^o} trick and energy estimate]
  For any $p \geqslant 2$ the following ``It\^o trick'' bound holds uniformly in $N$
  \begin{equation}\label{eq:itotrick-existence} \mathbb{E} \left[ \sup_{t \leqslant T} \left| \int_0^t \mathcal{L}_0^N F
     (u_s^N) \mathd s \right|^p \right] \lesssim T^{p / 2} \left\| \sum_{k =
     1}^N \lambda_k | \partial_k F |^2 \right\|_{L^{p / 2} (\mu_N)}^{p / 2},
     \qquad F \in \mathcal{C},
  \end{equation}
  and if $p = 2$ or if $F(\eta) = \Phi(\eta(f))$ with a polynomial $\Phi$ of degree $M$, then we also have the \emph{energy estimate}
  \begin{equation}\label{eq:energy-estimate-existence} \mathbb{E} \left[ \sup_{t \leqslant T} \left| \int_0^t F (u_s^N) \mathd s
     \right|^p \right] \lesssim (T^{p / 2} + T^p) \| \Pi_N F \|_{\mathcal{H}^{-
     1}_0}^p,
  \end{equation}
  with an implicit constant that depends on $M$ if $p > 2$.
\end{lemma}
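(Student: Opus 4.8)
The plan is to prove both bounds via the \emph{It\^o trick}, a forward-backward martingale decomposition that exploits the stationarity of $u^N$ together with the symmetry of $\mathcal{L}_0^N$ and the antisymmetry of $\mathcal{G}^N$ under $\mu_N$. For the first bound I would apply It\^o's formula to $F(u^N)$ along the forward dynamics with generator $\mathcal{L}^N=\mathcal{L}_0^N+\mathcal{G}^N$, obtaining a continuous forward martingale $M^F$ with $F(u^N_t)-F(u^N_0)-\int_0^t\mathcal{L}^N F(u^N_s)\mathd s = M^F_t$ and quadratic variation $\mathd[M^F]_t = 2\sum_{k=1}^N\lambda_k|\partial_k F(u^N_t)|^2\mathd t$. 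Since $u^N$ is stationary and $\mu_N$-reversible for the symmetric part, the time-reversed process $\hat u^N_s = u^N_{T-s}$ is again a diffusion, now with generator $\mathcal{L}_0^N-\mathcal{G}^N$, and applying It\^o to it yields a backward martingale $N^F$ with the same carr\'e du champ. Combining the two identities, the antisymmetric contributions $\pm\int_0^t\mathcal{G}^N F$ cancel and one is left with
\[ \int_0^t\mathcal{L}_0^N F(u^N_s)\mathd s = -\tfrac12\big(M^F_t + (N^F_T-N^F_{T-t})\big), \qquad t\in[0,T]. \]
Taking the supremum over $t\le T$, bounding $\sup_t|N^F_T-N^F_{T-t}|\le 2\sup_r|N^F_r|$, and applying the Burkholder--Davis--Gundy inequality to each martingale reduces the left-hand side to $\mathbb{E}[[M^F]_T^{p/2}]+\mathbb{E}[[N^F]_T^{p/2}]$. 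Jensen's inequality in the time variable together with stationarity ($u^N_s\sim\mu_N$) then turns $\mathbb{E}[(\int_0^T 2\sum_k\lambda_k|\partial_k F(u^N_s)|^2\mathd s)^{p/2}]$ into $T^{p/2}\|\sum_k\lambda_k|\partial_k F|^2\|_{L^{p/2}(\mu_N)}^{p/2}$, which is the claim.

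For the energy estimate I would first note that a cylinder function evaluated at $u^N$ only sees the first $N$ coordinates, so $F(u^N_s)=\Pi_N F(u^N_s)$ and we may work with $\Pi_N F$. The idea is to remove a full gradient by inverting the symmetric part: set $h\assign(1-\mathcal{L}_0^N)^{-1}\Pi_N F$, which is well defined because $\mathcal{L}_0^N$ is the $N$-dimensional Ornstein--Uhlenbeck generator (it acts diagonally on Hermite polynomials, hence preserves the polynomial degree and the chaos decomposition). Writing $\Pi_N F = h-\mathcal{L}_0^N h$, we split $\int_0^t\Pi_N F(u^N_s)\mathd s$ into $\int_0^t h(u^N_s)\mathd s$ and $-\int_0^t\mathcal{L}_0^N h(u^N_s)\mathd s$. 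The second term is handled by the It\^o trick above, giving a factor $T^{p/2}$; for $p=2$ its size is $\mathbb{E}_{\mu_N}[\sum_k\lambda_k|\partial_k h|^2]=\langle-\mathcal{L}_0^N h,h\rangle_{L^2(\mu_N)}\le\langle(1-\mathcal{L}_0^N)h,h\rangle=\langle\Pi_N F,h\rangle\le\|\Pi_N F\|_{\mathcal{H}^{-1}_0}\|h\|_{\mathcal{H}^1_0}$, and the uniform resolvent bound $\|h\|_{\mathcal{H}^1_0}\lesssim\|\Pi_N F\|_{\mathcal{H}^{-1}_0}$ from Lemma~\ref{lem:L0-bounds} (which transfers from $\mathcal{L}_0$ to $\mathcal{L}_0^N$ since the two agree on $\Pi_N$-cylinder functions because $A e_k=\lambda_k e_k$) closes the estimate. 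The first term is controlled crudely by $\mathbb{E}[\sup_{t\le T}|\int_0^t h(u^N_s)\mathd s|^p]\le T^{p-1}\int_0^T\mathbb{E}[|h(u^N_s)|^p]\mathd s=T^p\|h\|_{L^p(\mu_N)}^p$, which for $p=2$ is $T^2\|h\|_{L^2(\mu_N)}^2\le T^2\|h\|_{\mathcal{H}^1_0}^2\lesssim T^2\|\Pi_N F\|_{\mathcal{H}^{-1}_0}^2$. Summing the two contributions gives $(T^{p/2}+T^p)\|\Pi_N F\|_{\mathcal{H}^{-1}_0}^p$ for $p=2$.

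To reach general $p>2$ when $\Phi$ has degree $M$, I would upgrade the two $L^2$-type bounds on $h$ (namely on $\sum_k\lambda_k|\partial_k h|^2$ and on $|h|$) to $L^{p/2}$ and $L^p$ bounds by \emph{Gaussian hypercontractivity}: since $\Pi_N F$ and hence $h$ live in a fixed finite sum of Wiener chaoses of order at most $M$, all $L^q(\mu_N)$ norms on this space are equivalent, with a constant depending only on $q$ and $M$ and independent of $N$. This is exactly what produces the $M$-dependence of the implicit constant for $p>2$. The main technical obstacle is the rigorous justification of the backward martingale structure in the It\^o trick, i.e.\ that the time-reversed stationary diffusion $\hat u^N$ is again an It\^o diffusion with generator $\mathcal{L}_0^N-\mathcal{G}^N$ and that $N^F$ is a genuine martingale with the stated quadratic variation; this rests on the reversibility of the symmetric part and standard time-reversal theory for finite-dimensional diffusions, while everything else (resolvent bound, Burkholder--Davis--Gundy, Jensen, hypercontractivity) is uniform in $N$.
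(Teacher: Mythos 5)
Your proposal is correct and follows essentially the same route as the paper: the forward--backward martingale decomposition with BDG for the It\^o trick, inversion of $(1-\mathcal{L}_0^N)$ (your $h=(1-\mathcal{L}_0^N)^{-1}\Pi_N F$ is exactly the paper's substitution step) combined with the crude $T^p$ bound on the zeroth-order piece, the identification of the resulting norm with $\|\Pi_N F\|_{\mathcal{H}^{-1}_0}$ via the eigenfunction property $Ae_k=\lambda_k e_k$, and Gaussian hypercontractivity on the fixed-degree chaos for $p>2$. No gaps.
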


\begin{proof}
  We assume without loss of generality that $F=\Pi_N F$; otherwise we use that $F=\Pi_N F$ $\mu^N$-a.s. and derive the estimate for $\Pi_N F$ in place of $F$. \textcolor{red}{By time reversal of Markov processes, $u^N$ is an energy solution for the projected equation \eqref{eq:SDE} with non-linearity $B^N$ in the sense of Lemma \ref{lem:sufficient-energy}}. Therefore, the same arguments as in~\cite[Proposition 3.2]{Gubinelli2018Energy} show that for any cylinder function $F$
  \[ 2 \int_0^t \mathcal{L}_0^N F (u_s^N) \mathd s = M_t^F + \hat{M}^F_{T - t}
     - \hat{M}^F_T, \]
  where $\mathd M^F_t = \sum_{k = 1}^N \partial_k F (u^N_t) \sqrt{2 \lambda_k} \mathd W^k_t$
  and thus
  \( \mathd [M^F]_t = 2 \sum_{k = 1}^N \lambda_k | \partial_k F (u^N_t) |^2, \)
  and similarly for the backward martingale $\hat{M}^F$. The Burkholder-Davis-Gundy inequality now yields
  \[ \mathbb{E} \left[ \sup_{t \leqslant T} \left| \int_0^t \mathcal{L}_0^N F
     (u_s^N) \mathd s \right|^p \right] \lesssim T^{p / 2} \left\| \sum_{k =
     1}^N \lambda_k | \partial_k F |^2 \right\|_{L^{p / 2} (\mu_N)}^{p / 2}, \]
  and for $p = 2$ the right hand side is
  \[ T \sum_{k = 1}^N \lambda_k \| \partial_k F \|_{L^2 (\mu_N)}^2 = T \langle
     F, (-\mathcal{L}_0^N) F \rangle_{L^2 (\mu^N)} . \]
  If $\Phi$ is a polynomial of degree $M$, then
  $\sum_{k = 1}^N \lambda_k | \partial_k F |^2$ is a polynomial of degree $2 M - 2$ and thus we obtain from Gaussian hypercontractivity
  \[ \left\| \sum_{k = 1}^N \lambda_k | \partial_k F |^2 \right\|_{L^{p / 2}
     (\mu_N)}^{p / 2} \simeq_M \langle F, (-\mathcal{L}_0^N) F \rangle_{L^2
     (\mu^N)}^{p/2}. \]
  We can of course also bound $\mathbb{E} \left[ \sup_{t \leqslant T} \left| \int_0^t F (u_s^N) \mathd s \right|^p \right] \lesssim T^p \| F \|_{L^p (\mu_N)}^p$ with the triangle inequality,
  and this means that for $p = 2$ or in the hypercontractive case
  \[ \mathbb{E} \left[ \sup_{t \leqslant T} \left| \int_0^t (1
     -\mathcal{L}_0^N) F (u_s^N) \mathd s \right|^p \right] \lesssim (T^{p /
     2} + T^p) \| (1 -\mathcal{L}_0^N)^{1 / 2} F \|_{L^2 (\mu^N)}^p, \]
  or, noting that we can invert $(1 -\mathcal{L}^N_0)$ in $L^2(\mu^N)$,
  \begin{align*}
    \mathbb{E} \left[ \sup_{t \leqslant T} \left| \int_0^t F (u_s^N) \mathd s
    \right|^p \right] & \lesssim  (T^{p / 2} + T^p) \| (1
    -\mathcal{L}_0^N)^{1 / 2} (1 -\mathcal{L}_0^N)^{- 1} F \|_{L^2
    (\mu^N)}^p\\
    & =  (T^{p / 2} + T^p) \| (1 -\mathcal{L}_0^N)^{- 1 / 2} F \|_{L^2
     (\mu^N)}^p
  \end{align*}
  By duality and density of $\Pi_N \mathcal C$ in $L^2(\mu^N)$, we have for $\eta^N\sim \mu^N = \mu\circ \Pi_N^{-1}$
\[ \| (1 -\mathcal{L}_0^N)^{- 1 / 2} F \|_{L^2 (\mu^N)} = \sup \{ \mathbb{E}
   [F (\eta^N) G (\eta^N)] : G = \Pi_N G \in \mathcal{C}, \| (1 -\mathcal{L}_0^N)^{1 /
   2} G \|_{L^2 (\mu^N)} \leqslant 1 \}. \]
We use that
$\mathcal{L}_0^N G = \mathcal{L}_0 G = \Pi_N \mathcal{L}_0 G$ because $e_k$ are eigenfunctions of
$A$ to obtain
\[ \| (1 -\mathcal{L}_0^N)^{1 / 2} G \|_{L^2 (\mu^N)} = \| (1
   -\mathcal{L}_0)^{1 / 2} G \|_{L^2 (\mu)}, \]
and since $F=\Pi_N F$ and $G=\Pi_N G$, we end up with
\begin{align*}
  \| (1 -\mathcal{L}_0^N)^{- 1 / 2} F \|_{L^2 (\mu^N)} & =  \sup \{
  \mathbb{E} [F (\eta^N) G (\eta^N)] : G \in \mathcal{C}, G = \Pi_N G, \| (1
  -\mathcal{L}_0)^{1 / 2} G \|_{L^2 (\mu)} \leqslant 1 \}\\
  & \leqslant  \sup \{ \mathbb{E} [ F (\eta) G (\eta)] : G \in
  \mathcal{C}, \| (1 -\mathcal{L}_0)^{1 / 2} G \|_{L^2 (\mu)} \leqslant 1 \}\\
  & =  \| (1 -\mathcal{L}_0)^{- 1 / 2}  F \|_{L^2 (\mu)} . \qedhere
\end{align*}
\end{proof}

\begin{lemma}\label{lem:tightness}
  For all $f \in V$ the processes $(u^N (f))_{N \in \mathbb{N}}$ are uniformly
  tight in $C(\mathbb R_+, \mathbb R)$, and there exists $p<2$ such that also the real-valued random variables $(\|\int_0^\cdot
    \langle {:}B^N (u^N_r, u^N_r){:}, f \rangle_{V^{\ast}, V} \mathd r \|_{[0,T],p-var})_{N}$ are uniformly tight for all $T>0$.
\end{lemma}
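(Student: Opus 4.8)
The plan is to fix $f \in V$, decompose the Galerkin process into its building blocks, and prove uniform-in-$N$ regularity estimates for each. Summing the SDE~\eqref{eq:SDE} against $\langle e_k, f\rangle_H$ gives
\[
  u^N_t(f) - u^N_0(f) = -\int_0^t u^N_s(Af)\,\mathd s + J^N_t + M^f_t,
\]
where $J^N_t = \int_0^t \langle {:}B^N(u^N_s,u^N_s){:}, f\rangle_{V^{\ast},V}\,\mathd s$ is the nonlinear drift and $M^f$ is a continuous martingale with deterministic bracket $[M^f]_t = 2t\langle A\Pi_N f, \Pi_N f\rangle_H$. For the first claim I would prove a uniform increment bound $\mathbb{E}[|u^N_t(f) - u^N_s(f)|^p] \lesssim |t-s|^{p/2}$ for some fixed $p>2$ with constants independent of $N$, and combine it with tightness of the one-dimensional marginals $u^N_0(f) \sim \mathcal{N}(0, \|\Pi_N f\|_H^2)$, so that the Kolmogorov--Chentsov criterion yields tightness in $C(\mathbb{R}_+,\mathbb{R})$. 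For the second claim I would establish a \emph{refined} second-moment increment bound $\mathbb{E}[|J^N_{s,t}|^2] \lesssim |t-s|^{1+\kappa}$ with $\kappa>0$, uniformly in $N$, and then invoke a Besov--variation embedding.

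For the martingale part the bracket is, by Assumption~\ref{ass:A} together with the uniform bound $\|\Pi_N\|_{L(\dot V,\dot V)}\leqslant C$ (which follows from $\|\Pi_N v\|_{\dot V}^2 \simeq \langle A\Pi_N v, \Pi_N v\rangle_H \leqslant \langle Av,v\rangle_H \simeq \|v\|_{\dot V}^2$ and the eigenbasis structure), at most $2tC\|f\|_V^2$; Burkholder--Davis--Gundy then gives $\mathbb{E}[|M^f_t - M^f_s|^p]\lesssim \|f\|_V^p |t-s|^{p/2}$ for any $p\geqslant 2$. For the two drift terms I would apply the hypercontractive energy estimate~\eqref{eq:energy-estimate-existence} shifted to $[s,t]$ by stationarity. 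The linear drift is $\int_s^t \eta(\Pi_N Af)\,\mathd s$ evaluated along $u^N$, a degree-one cylinder function with $\|\eta(\Pi_N Af)\|_{\mathcal{H}^{-1}_0} \lesssim \|\Pi_N Af\|_{\dot V^{\ast}} \lesssim \|f\|_{\dot V}$, using $\mathbb{E}[\eta(g)G] = \mathbb{E}[\langle DG, g\rangle_H] \leqslant \|g\|_{\dot V^{\ast}}\|G\|_{\mathcal{H}^1_0}$. The nonlinear drift is the degree-two cylinder function $F^N = \langle {:}B^N(\eta,\eta){:}, f\rangle_{V^{\ast},V}$, which I would identify with $\mathcal{G}^{\Pi_N}\eta(f)$ and bound by Lemma~\ref{lem:G-bounds}: $\|F^N\|_{\mathcal{H}^{-1}_0} \lesssim (\|\Pi_N\|_{L(H,H)} + \|\Pi_N\|_{L(\dot V,\dot V)})^3 \|\eta(f)\|_{\mathcal{H}^1_1} \lesssim \|f\|_V$, again uniformly in $N$. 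Each term then satisfies $\mathbb{E}[|\cdot|^p]\lesssim (|t-s|^{p/2}+|t-s|^p)\|f\|_V^p$, and summing via Minkowski yields the increment bound for any $p>2$.

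For the $p$-variation claim I would first reduce tightness of the real-valued variables $\|J^N\|_{[0,T],p-\mathrm{var}}$ to a uniform moment bound $\sup_N \mathbb{E}[\|J^N\|_{[0,T],p-\mathrm{var}}^2] < \infty$ via Markov's inequality, and then, following the use of the Besov--variation embedding (Corollary~A.3 of the appendix of~\cite{Friz2010}) in the proof of Lemma~\ref{lem:sufficient-energy}, reduce this in turn to the uniform bound $\sup_N \int_{[0,T]^2} |t-s|^{-(1+2/p)}\,\mathbb{E}[|J^N_{s,t}|^2]\,\mathd s\,\mathd t < \infty$. Once $\mathbb{E}[|J^N_{s,t}|^2] \lesssim |t-s|^{1+\kappa}$ is available, this integral converges precisely for $p > 2/(1+\kappa)$, and since $\kappa>0$ we may choose such a $p<2$.

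The crux is the refined bound $\mathbb{E}[|J^N_{s,t}|^2] \lesssim |t-s|^{1+\kappa}$, which I would obtain by the two-scale interpolation of Lemma~\ref{lem:sufficient-energy}. The plain $L^2$ energy estimate gives only the coarse-scale bound $\mathbb{E}[|J^N_{s,t}|^2] \lesssim |t-s|\,\|f\|_V^2$, which by itself merely reaches $p>2$. To gain at small scales I would, for a given increment $|t-s|$, introduce a regularization at an intermediate level $M$ with $\varepsilon_M \simeq |t-s|^{\kappa}$, $\kappa = 1/(1+\alpha)$ and $\alpha$ as in Assumption~\ref{ass:B}, and split $F^N$ into this coarser drift plus a remainder. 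The remainder is controlled in $\mathcal{H}^{-1}_0$ with a rate $\varepsilon_M$ by the convergence estimates behind Lemma~\ref{lem:G-approximation} and~\eqref{eq:B-tilde-speed}, so the $L^2$ energy estimate bounds its contribution by $\varepsilon_M|t-s|$; the coarse drift has $L^2(\mu)$-norm $\lesssim \|\rho_M\|_{L(H,V)}\|f\|_V \lesssim \varepsilon_M^{-\alpha}\|f\|_V$ by~\eqref{eq:G-rho-bound}, so the trivial Cauchy--Schwarz bound controls its contribution by $\varepsilon_M^{-\alpha}|t-s|^2$. Balancing $\varepsilon_M|t-s| + \varepsilon_M^{-\alpha}|t-s|^2 \simeq |t-s|^{1+\kappa}$ at $\varepsilon_M\simeq|t-s|^\kappa$ and interpolating with the coarse bound closes the estimate uniformly in $N$, exactly as in Lemma~\ref{lem:sufficient-energy}. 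I expect the main obstacle to lie precisely in this last step: the rate in Assumption~\ref{ass:B} is formulated for the abstract family $(\rho_N)$, whereas the Galerkin drift $F^N = \mathcal{G}^{\Pi_N}\eta(f)$ is built from the spectral projections $\Pi_N$, so the delicate point is to control the difference $\mathcal{G}^{\Pi_N}\eta(f) - \mathcal{G}^{\rho_M}\eta(f)$ in $\mathcal{H}^{-1}_0$ uniformly in $N$ for $M\leqslant N$, reconciling the two families of regularizations (for instance by passing through the common limit $\mathcal{G}\eta(f)$ and checking that $\Pi_N$ enjoys the required rate on $\mathcal{S}$).
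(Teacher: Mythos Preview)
Your proposal is correct and follows essentially the same approach as the paper: decompose $u^N(f)$ into initial condition, martingale, linear drift and nonlinear drift; bound each via BDG and the (hypercontractive) energy estimate~\eqref{eq:energy-estimate-existence} together with the identification $\langle {:}B^N(\eta,\eta){:},f\rangle = \mathcal{G}^{\Pi_N}\eta(f)$ and Lemma~\ref{lem:G-bounds}; and obtain the $p$-variation tightness by the two-scale interpolation of Lemma~\ref{lem:sufficient-energy}. The paper's proof is in fact terser than yours on the $p$-variation step (it literally says ``same argument as in Lemma~\ref{lem:sufficient-energy}''), and the obstacle you flag---reconciling the spectral cutoffs $\Pi_N$ with the abstract family $(\rho_M)$---is real but resolvable: the uniform bound $\|\Pi_N^{\otimes 2}\|_{L(V\otimes_s H,V\otimes_s H)}\lesssim 1$ from Lemma~\ref{lem:tensor-operator} lets the rate estimates of Assumption~\ref{ass:B} pass through $\Pi_N$, exactly as the paper later does explicitly in deriving~\eqref{eq:existence-pr2} in the proof of Theorem~\ref{thm:exisence}.
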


\begin{proof}
  Tightness at the initial time is clear, because
  \[ \mathbb{E} [u^N_0 (f)^2] = \| \Pi_N f \|_H^2 \leqslant \| f \|_H^2 . \]
  For the time increment $u^N_{s,t}(f) = u^N_t(f) - u^N_s(f)$ we obtain
  \begin{align*}
    u^N_{s, t} (f) & =  \int_s^t u^N_r (- A \Pi_N f) \mathd r + \int_s^t
    \langle {:}B^N (u^N_r, u^N_r){:}, f \rangle_{V^{\ast}, V} \mathd r + M^{f,
    N}_{s, t}.
  \end{align*}
  The quadratic variation of $M^{f,N}$ is
  \[ [M^{f, N}]_{s,t} = 2 (t-s) \sum_{k = 1}^N \lambda_k \langle f, e_k \rangle_H^2 \leqslant 2 (t-s) \sum_{k = 1}^\infty \lambda_k \langle f, e_k \rangle_H^2 =
     2 (t-s) \langle A f, f \rangle_H \simeq (t-s) \| f
     \|_{\dot{V}}^2, \]
  which gives tightness for the martingale. Also, $u (A \Pi_N f) =\mathcal{L}_0^N u (f)$ and thus by the It{\^o} trick~\eqref{eq:itotrick-existence}
  \[
    \mathbb{E} \left[ \left| \int_s^t u^N_r (- A \Pi_N f) \mathd r \right|^p
    \right] \lesssim | t - s |^{p / 2} \sum_{k = 1}^N \lambda_k \langle f, e_k \rangle_H^2 \leqslant | t - s |^{p / 2} \| f \|_{\dot{V}}^2,
  \]
  which yields tightness for the linear contribution to the drift. It remains to treat the nonlinear part of the drift. Note that $\langle {:}B^N(\eta,\eta){:}, f\rangle = \mathcal G^{\Pi_N} \eta(f)$ is a second order polynomial in $\eta$, and therefore the hypercontractive version of the energy estimate, \eqref{eq:energy-estimate-existence}, together with the bounds for $\mathcal{G}^{\Pi_N}$ from~\eqref{eq:G-bounds} yields
  \begin{align}
     \mathbb{E}\left[\left|\int_s^t\langle{:}B^N(u^N_r,u^N_r){:},\Pi_Nf\rangle_H\mathd r\right|^p\right] & \lesssim |t-s|^{p/2}\left\|\mathcal G^{\Pi_N} \eta(f)\right\|_{\mathcal{H}^{-1}_0}^2 \nonumber \\
     & \lesssim |t-s|^{p/2}(\|
    \Pi_N \|_{L (H, H)} + \| \Pi_N \|_{L (\dot{V}, \dot{V})})^3 \| \eta(f)  \|_{\mathcal{H}^1_{1}} \nonumber \\
     & \simeq |t-s|^{p/2}(\|
    \Pi_N \|_{L (H, H)} + \| \Pi_N \|_{L (\dot{V}, \dot{V})})^3 \| f  \|_{V},\label{eq:tightness-pr1}
  \end{align}
uniformly in $N$. Thus, we obtain tightness as soon as we show that $\|\Pi_N \|_{L (H, H)} + \| \Pi_N \|_{L (\dot{V}, \dot{V})}\lesssim 1$. The first term is clearly bounded by $1$ because $\Pi_N$ is an orthogonal projection in $H$. For the second term we use that $(e_k)$ are eigenvectors for $A$ to bound
  \[
     \| \Pi_N f \|_{\dot V}^2 \simeq \langle A \Pi_N f, \Pi_N f\rangle_{V^\ast, V} \leqslant \langle A f, f\rangle_{V^\ast, V} \simeq \| f\|_{\dot V}^2,
  \]
  and this concludes the proof of tightness of $(u^N)$. Tightness for the $p$-variation is shown with the same argument as in Lemma~\ref{lem:sufficient-energy}.
\end{proof}

\begin{theorem}[Existence of energy solutions]\label{thm:exisence}
We make Assumption~\ref{ass:A} and~\ref{ass:B} and we assume that there exists an orthonormal basis of eigenvectors for $A$ and that $ S$ is a nuclear Fr\'echet space. 
Let $\nu \ll \mu$ be such that $\frac{\mathd\nu}{\mathd\mu} \in L^2(\mu)$.
Then there exists an energy solution $u$ with $u_0 \sim \nu$.
\end{theorem}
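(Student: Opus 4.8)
The plan is to construct the solution by a Galerkin/tightness argument, first producing a \emph{stationary} energy solution (started at $\mu$) and then reweighting the initial condition to obtain $u_0 \sim \nu$. For the stationary construction I would take the finite-dimensional approximations $u^N$ solving the projected SDE~\eqref{eq:SDE} started in their invariant measure $\mu_N$, so that each $u^N$ is stationary with $u^N_t \sim \mu_N$; here the assumed orthonormal basis of $A$-eigenvectors is what makes the projected dynamics well defined. Lemma~\ref{lem:tightness} gives, for every $f \in V$, uniform tightness of the real-valued processes $(u^N(f))_N$ in $C(\mathbb{R}_+, \mathbb{R})$ together with tightness of the $p$-variation of the nonlinear drift $\int_0^\cdot \langle {:} B^N(u^N_r, u^N_r) {:}, f\rangle_{V^{\ast},V}\,\mathd r$ for some $p<2$. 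Using that $\mathcal{S}$ is a nuclear Fr\'echet space, Mitoma's theorem upgrades this one-dimensional tightness to tightness of the $\mathcal{S}'$-valued processes $u^N$ in $C(\mathbb{R}_+, \mathcal{S}')$, so by Prokhorov's theorem and the Skorokhod representation theorem I may pass to a subsequence converging almost surely to a limit $u$, along which the martingales and the $p$-variation-controlled drifts also converge.

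Next I would verify that $u$ satisfies the sufficient conditions of Lemma~\ref{lem:sufficient-energy}. Path regularity is immediate from the convergence in $C(\mathbb{R}_+, \mathbb{R})$, and stationarity $u_t \sim \mu$ follows because the Gaussian marginals $u^N_t(f) \sim \mathcal{N}(0, \|\Pi_N f\|_H^2)$ converge to $\mathcal{N}(0, \|f\|_H^2)$. For the weak-solution property I would pass to the limit in the Galerkin martingale $M^{f,N}$: its quadratic variation $2t\sum_{k\le N}\lambda_k\langle f, e_k\rangle_H^2$ converges to $2t\langle Af, f\rangle_H$, while the nonlinear drift converges in $p$-variation to a process $\mathcal{B}^f$ of vanishing quadratic variation (as $p<2$), which by Lemma~\ref{lem:G-approximation} is the approximation-independent object $I(\langle {:} B(\cdot,\cdot) {:}, f\rangle_{V^{\ast},V})$ and hence matches the regularization through the $(\rho_N)$ of Assumption~\ref{ass:B}. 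The behavior under time-reversal is where stationarity is essential: since $\mathcal{L}_0^N$ is reversible for $\mu_N$ and $\mathcal{G}^N$ is antisymmetric, the reversed stationary process $\hat u^N_t = u^N_{T-t}$ has generator $\mathcal{L}_0^N - \mathcal{G}^N$, so the associated $\hat M^{f,N}$ (with the sign of the nonlinear drift flipped) is a martingale; passing to the limit yields the required reversed martingale. Lemma~\ref{lem:sufficient-energy} then certifies that $u$ is a stationary energy solution.

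Finally, to obtain $u_0 \sim \nu$ I would reweight the law of the stationary solution. Writing $\mathbb{P}_\mu$ for its law on path space, I define $\mathbb{P}_\nu$ by $\frac{\mathd \mathbb{P}_\nu}{\mathd \mathbb{P}_\mu} = \frac{\mathd\nu}{\mathd\mu}(u_0)$; since this density is $\mathcal{F}_0$-measurable and the change of measure is equivalent, under $\mathbb{P}_\nu$ we have $u_0 \sim \nu$, all almost-sure properties (path regularity, the quadratic variation $[M^f]_t = 2t\langle Af,f\rangle_H$, the vanishing $p$-variation of $\mathcal{B}^f$) are preserved, and every $\mathcal{F}$-martingale remains a martingale because conditioning on $\mathcal{F}_s \supseteq \mathcal{F}_0$ leaves the $\mathcal{F}_0$-measurable density untouched. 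Incompressibility and the energy estimate then follow from Cauchy--Schwarz and the stationary $L^2$-energy estimate~\eqref{eq:improved-energy}, e.g. $|\mathbb{E}_\nu[F(u_t)]| \le \|\tfrac{\mathd\nu}{\mathd\mu}\|_{L^2(\mu)}\|F\|_{L^2(\mu)}$ and $\mathbb{E}_\nu[\sup_{t\le T}|\int_0^t F(u_s)\,\mathd s|] \le \|\tfrac{\mathd\nu}{\mathd\mu}\|_{L^2(\mu)}(\mathbb{E}_\mu[\sup_{t\le T}|\int_0^t F(u_s)\,\mathd s|^2])^{1/2}$, giving the required bound of Definition~\ref{def:energy}. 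I expect the main obstacle to be the identification of the limiting singular drift --- establishing that the renormalized nonlinearity passes to the limit with vanishing quadratic variation and coincides with $\langle {:} B(\cdot,\cdot) {:}, f\rangle_{V^{\ast},V}$ --- since this is where the renormalization, the uniform energy estimates, and the approximation Lemma~\ref{lem:G-approximation} must be combined; the tightness lift via nuclearity and the time-reversal identification are comparatively standard.
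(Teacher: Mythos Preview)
Your approach is correct but follows a genuinely different route from the paper. The paper starts the Galerkin approximations $u^N$ directly at the projected non-stationary data $\nu^N = \mathbb{E}[\tfrac{\mathd\nu}{\mathd\mu}\,|\,\mathcal{F}_N]\,\mathd\mu^N$ and then verifies Definition~\ref{def:energy} for the limit by transferring all estimates from the stationary case via the Cauchy--Schwarz bound $\mathbb{E}_{\nu^N}[\Psi(u^N)] \le \|\tfrac{\mathd\nu}{\mathd\mu}\|_{L^2(\mu)}\,\mathbb{E}_{\mu^N}[\Psi(u^N)^2]^{1/2}$ at the approximation level. You instead build the stationary solution first, certify it via Lemma~\ref{lem:sufficient-energy} (exploiting that the time-reversal argument is clean for stationary $u^N$), and only then reweight on path space by $\tfrac{\mathd\nu}{\mathd\mu}(u_0)$. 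Your route is conceptually tidier and makes the role of stationarity explicit; the paper's route avoids the detour through Lemma~\ref{lem:sufficient-energy} and is closer to how one would argue when the approximations come from particle systems that are not started in equilibrium. Two small remarks: the change of measure is only absolutely continuous, not equivalent (but that is all you need), and the step you flag as the main obstacle---identifying the Galerkin drift limit $\mathcal{B}^f$ with the $\rho_M$-regularized object $\lim_M \int_0^\cdot \langle {:}\tilde B^{\rho_M}(u_s,u_s){:}, f\rangle\,\mathd s$ in $p$-variation---is exactly where the paper spends most of its effort (the interpolation argument around~\eqref{eq:existence-pr5}), so neither approach escapes it.
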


\begin{proof}
Let $\mathcal F_N = \sigma(\eta(e_1), \dots, \eta(e_N))$ and let $\gamma^N = \E[\mathd \nu / \mathd \mu|\mathcal F_N]$ (conditional expectation in $L^2(\mu)$) and $\mathd \nu^N = \gamma^N \mathd \mu^N$. Let $u^N$ be a solution to the SDE~\eqref{eq:SDE} with $u^N_0\sim \nu^N$. We claim that the sequence $(u^N)_{N \in \mathbb N}$ is uniformly tight in $C(\mathbb R_+,  S')$ and that any limit point $u$ is an energy solution with $u_0 \sim \nu$.

To show tightness of $(u^N)_N$ in $C(\mathbb R_+,  S')$ we apply Mitoma's criterion~\cite{Mitoma1983}, which states that it suffices to show tightness of $(u^N(f))_N$ in $C(\mathbb R_+, \mathbb R)$ for all $f \in  S$; this is the only part of the argument where we use that $ S$ is a nuclear Fr\'echet space. Lemma~\ref{lem:tightness} gives us the tightness of $(u^N(f))_N$ if $\nu=\mu$. Otherwise, we perform a change of measure at the initial time and we apply the Cauchy-Schwarz inequality and Jensen's inequality: For any bounded and measurable $\Psi: C(\mathbb R_+, \mathbb R) \to \R$
\begin{align}
    \E_{\nu^N}[\Psi(u^N)] & = \int \frac{\mathd \nu^N}{\mathd\mu^N}(u) \E_{u}[\Psi(u^N)]  \mu^N(\mathd u) \leqslant \left\| \frac{\mathd\nu^N}{\mathd\mu^N} \right\|_{L^2(\mu^N)} \left(\int  \E_{u}[\Psi(u^N)]^2  \mu^N(\mathd u)\right)^{1/2} \nonumber \\
    &\leqslant \left\| \frac{\mathd\nu}{\mathd\mu} \right\|_{L^2(\mu)} \E_{\mu^N}[\Psi(u^N)^2]^{1/2}.\label{eq:existence-pr1}
\end{align}
Tightness follows by taking $\Psi$ as the indicator function of the complement of a compact set $K$ in $C(\mathbb R_+, \mathbb R)$ such that $\sup_N \P_{\mu^N}(u^N(f) \in K^c) \leqslant \varepsilon$. 

Next, we fix a limit point $u$ along a subsequence that we denote for simplicity again with $(u_N)$. We need to show that $u$ is an energy solution. Taking $\Psi(u) = |F(u_t)|$ in \eqref{eq:existence-pr1}, we obtain the uniform incompressibility estimate
\begin{align*}
    \E_{\nu^N}[|F(u^N_t)|] \leqslant \left\| \frac{\mathd\nu}{\mathd\mu} \right\|_{L^2(\mu)} \left\| \Pi_N F \right\|_{L^2(\mu)}.
\end{align*}
Taking $\Psi(u) = \sup_{t \leqslant T}| \int_0^t F(u_s)\mathd s|$ and combining \eqref{eq:existence-pr1} with the energy estimate~\eqref{eq:energy-estimate-existence} gives
\[
    \E_{\nu^N}\left[\sup_{t \leqslant T}\left| \int_0^t F(u^N_s) \mathd s\right|\right] \lesssim (T^{1/2}+T) \|\Pi_N F\|_{\mathcal H^{-1}_0}.
\]
By Fatou's lemma, $u$ satisfies points i. (path regularity), ii. (incompressibility), iii. (energy estimate) in Definition~\ref{def:energy} of energy solutions, provided that in ii. and iii. we restrict to cylinder functions satisfying $F = \Pi_N F$ for some $N \in \N$. Since such ``finite range'' cylinder functions are closed under pointwise multiplication, the incompressibility estimate ii. for general $F \in \mathcal C$ follows by a monotone class argument. The energy estimate iii. for general $F \in \mathcal C$ is slightly more subtle. We let $F_N = \E_\mu[F|\mathcal F_N]$ with $F_N = \Pi_N F_N$ and apply the incompressibility condition ii. to obtain
\[
    \E\left[\sup_{t \leqslant T}\left| \int_0^t F(u_s) \mathd s\right|\right] = \lim_{N\to \infty} \E\left[\sup_{t \leqslant T}\left| \int_0^t F_N(u_s) \mathd s\right|\right] \lesssim \limsup_{N\to \infty} (T^{1/2}+T) \|F_N\|_{\mathcal H^{-1}_0}.
\]
Now for $G \in \mathcal C$:
\[
    \E[F_N G]=\E[F\E[G|\mathcal F_N]] \leqslant \|F\|_{\mathcal H^{-1}_0} \|\E[G|\mathcal F_N]\|_{\mathcal H^1_0},
\]
so the energy estimate follows if we can show that $G\mapsto \E[G|\mathcal F_N]$ is bounded linear map on $\mathcal H^1_0$. Since it is obviously bounded $L^2(\mu)$, it suffices to show boundedness in $\dot{\mathcal H}^1_0$:
\[
    \|\E[G|\mathcal F_N]\|_{\dot{\mathcal H}^1_0}^2 \simeq \sum_k \lambda_k \E[|D_{e_k}\E[G|\mathcal F_N]|^2] = \sum_{k\leqslant N} \lambda_k \E[|\E[D_{e_k}G|\mathcal F_N]|^2]\leqslant \sum_k \lambda_k \mathbb E[|D_{e_k}G|^2] \simeq \|G\|_{\dot{\mathcal H}^1_0}^2.
\]
This proves the energy estimate for general $F \in \mathcal C$.

It remains to show that any limit point satisfies the weak solution property (iv.). Let $f \in  S$, and note that
\[
    u^N_t(f) = u^N_0(f) - \int_0^t u^N_s(Af)\mathd s + \int_0^t \langle {:}B^N(u^N_s,u^N_s){:}, f\rangle_{V^\ast, V}\mathd s + M^{f,N}_t.
\]
The martingale and the initial condition converge to the desired limit by the usual arguments, see e.g. \cite{Goncalves2014, Gubinelli2013}. For the linear drift there is a small problem because we do not know if $Af \in  S$, and therefore we cannot use the the weak convergence in $C(\R_+, \R)$ to describe the limit of $\int_0^\cdot u^N_s(Af)\mathd s$. But by density of $ S$ in $H$ and by the uniform incompressibility estimate we can approximate $\int_0^\cdot u^N_s(Af)\mathd s$ by $\int_0^\cdot u^N_s(g)\mathd s$ with $g \in  S$ up to making an $L^1$-error of at most $\varepsilon$, pass to the limit $\int_0^\cdot u_s(g)\mathd s$, and replace this by $\int_0^\cdot u_s(Af)\mathd s$, again making an $L^1$-error of at most $\varepsilon$.

We are left with deriving the limit of the quadratic drift, and with showing that it has the required approximation property in $p$-variation. Let  $\mathcal{B}^f$ be a limit point of $\mathcal{B}^{f,N} = \int_0^{\cdot} \langle {:}
B^N (u^N_s, u^N_s) {:}, f \rangle_{V^{\ast}, V} \mathd s$, which
exists by tightness of this sequence, let $p' < 2$ be such that $(
\| \mathcal{B}^{f,N} \|_{[0, T], p' - \tmop{var}})_N$ is tight (see
Lemma~\ref{lem:tightness}), and let $p \in (p', 2)$. By density of $\bigcup_K
\Pi_K \mathcal{C}$, we can find for each $M \in \N$ a cylinder function $F_{M, f} =
\Pi_{K_M} F_{M, f}$ with $\| \langle {:} \tilde{B}^{\rho_M} (\cdummy, \cdummy)
{:}, f \rangle_{V^{\ast}, V} - F_{M, f} \|_{L^2 (\mu)} \leqslant \frac{1}{M}$,
and as $\left\| \int_0^{\cdot} (\cdummy) \mathd s \right\|_{[0, T], p
- \mathrm{var}} \leqslant \int_0^T | (\cdummy) | \mathd s$, this yields
\[
    \mathbb{E} \left[ \left\| \mathcal{B}^f -  \int_0^\cdot\langle {:} \tilde{B}^{\rho_M} (u_s, u_s){:}, f \rangle_{V^{\ast}, V} \mathd s \right\|_{[0, T], p - \mathrm{var}} \right] \leqslant \mathbb{E} \left[\left\| \mathcal{B}^f - \int_0^{\cdot} F_{M, f} (u_s) \mathd s \right\|_{[0, T], p - \mathrm{var}} \right] + \frac{T}{M} .
\]
By an interpolation argument, tightness in $p$-variation uniform topology is
equivalent to tightness in the uniform topology together with tightness of the
$p'$-variation norm, see \cite[Thm.~6.1]{Friz2018}.
Together with the convergence $\int_0^{\cdot} F_{M, f} (u_s^N) \mathd s
\rightarrow \int_0^{\cdot} F_{M, f} (u_s) \mathd s$ in uniform $1$-variation
topology, we thus obtain with Fatou's lemma for some sufficiently large $p<2$ (whose value will be fixed at the end of the proof),
\begin{align}
  & \mathbb{E} \left[ \left\| \mathcal{B}^f - \int_0^{\cdot} F_{M, f} (u_s)
  \mathd s \right\|_{[0, T], p - \mathrm{var}} \right]\nonumber \\
  & \leqslant \liminf_{N \rightarrow \infty} \mathbb{E} \left[ \left\|
  \int_0^{\cdot} \langle {:} B^N (u^N_s, u^N_s) {:}, f \rangle_{V^{\ast}, V}
  \mathd s - \int_0^{\cdot} F_{M, f} (u_s^N) \mathd s \right\|_{[0, T], p -
  \mathrm{var}} \right]\nonumber \\
  & \lesssim \liminf_{N \rightarrow \infty} \mathbb{E}_{\mu^N} \left[ \left\|
  \int_0^{\cdot} \langle {:} B^N (u^N_s, u^N_s) {:}, f \rangle_{V^{\ast}, V}
  \mathd s - \int_0^{\cdot} \langle {:} \tilde{B}^{\rho_M} (u^N_s, u^N_s) {:}, f
  \rangle_{V^{\ast}, V} \mathd s \right\|_{[0, T], p - \mathrm{var}}^2 \right]^{1/2} +
  \frac{T}{M} \nonumber \\
  & \lesssim \varepsilon_M^{\beta} + \frac{T}{M} \label{eq:existence-pr5}
\end{align}
for some $\beta > 0$, where the last step will be justified in the remainder of this proof. Once~\eqref{eq:existence-pr5} is established, we have shown that $\mathbb{E} [ \| \mathcal{B}^f -  \int_0^\cdot\langle {:} \tilde{B}^{\rho_M} (u_s, u_s){:}, f \rangle_{V^{\ast}, V} \mathd s\|_{[0, T], p - \mathrm{var}} ] $ converges to $0$ as $M\to \infty$ and this concludes the proof. To show \eqref{eq:existence-pr5} we use that, since $u^N = \Pi_N u^N$,
\begin{align}
    &\langle {:}B^N(u^N_s,u^N_s){:}, f\rangle_{V^\ast, V}  - \langle {:}\tilde B^{\rho_M}(u^N_s,u^N_s){:}, f\rangle_{V^\ast, V}  = \langle {:}B(\Pi_N u^N_s,\Pi_N u^N_s){:}, \Pi_N f - f\rangle_{V^\ast, V} \nonumber \\
    & \hspace{80pt} + \langle {:}B(\Pi_N u^N_s,\Pi_N u^N_s){:} -{:}\tilde B^{\rho_M}(\Pi_N u^N_s,\Pi_N u^N_s){:},  f\rangle_{V^\ast, V}. \label{eq:existence-pr3}
\end{align}
For the first term on the right hand side we simply bound with the Cauchy-Schwarz inequality
\begin{align*}
    \E_{\mu^N}\left[ \left|\int_s^t \langle {:}B(\Pi_N u^N_r,\Pi_N u^N_r){:}, \Pi_N f - f\rangle_{V^\ast, V} \mathd r\right|^2\right] & \lesssim (t-s)^2 \|\Pi_N\|_{L(H, V)} \|\Pi_N\|_{L(H, H)} \| \Pi_N f - f\|_{\dot V}\\
    & \lesssim (t-s)^2 (1+\lambda_N)^{1/2}\lambda_N^{-1/2} \|(1-\Pi_N)Af\|_H \\
    & \lesssim (t-s)^2 \|(1-\Pi_N)Af\|_H,
\end{align*}
where we used that
\[ \|\Pi_N h \|_{V}^2 \simeq \sum_{k \leqslant N} (1+ \lambda_k) \langle h, e_k\rangle_H^2 \leqslant (1+\lambda_N) \|h\|_H^2 \]
and
\[
    \| \Pi_N f - f\|_{\dot V}^2 \simeq \sum_{k>N} \lambda_k \langle f, e_k\rangle_H^2 \leqslant \lambda_N^{-1}\sum_{k>N} \lambda_k^2 \langle f, e_k\rangle_H^2 = \lambda_N^{-1} \| (1 - \Pi_N) Af\|_H^2.
\]
By the Besov-variation embedding already used in Lemma~\ref{lem:sufficient-energy}, we get for any $p<2$
\begin{equation} \label{eq:existence-pr4}
  \lim_{N \rightarrow \infty} \mathbb{E}_{\mu^N} \left[ \left\| \int_0^{\cdot} \langle {:}B(\Pi_N u^N_s,\Pi_N u^N_s){:}, \Pi_N f - f\rangle_{V^\ast, V} \mathd s \right\|_{[0, T], p - \mathrm{var}}^2 \right]^{1/2} = 0.
\end{equation}
For the remaining term in~\eqref{eq:existence-pr3} we use the same interpolation argument as in the proof of Lemma~\ref{lem:sufficient-energy} together with the bound $\|\Pi_N^{\otimes 2}\|_{L(V\otimes_s H, V\otimes_s H)} \lesssim 1$ from Lemma~\ref{lem:tensor-operator} in the appendix, to obtain a bound on the increment:
\begin{align}\label{eq:existence-pr2}
    \E_{\mu^N}\left[ \left|\int_s^t \langle {:}B(\Pi_N u^N_r,\Pi_N u^N_r){:} -{:}\tilde B^{\rho_M}(\Pi_N u^N_r,\Pi_N u^N_r){:},  f\rangle_{V^\ast, V} \mathd r\right|^2\right] \lesssim \varepsilon_M^\beta (t-s)^{1+\zeta}
\end{align}
for some $\beta,\zeta>0$. If $p<2$ is such that $2/p < 1+\zeta$ and such that there exists $p'\in(p,2)$ for which $(
\| \mathcal{B}^{f,N} \|_{[0, T], p' - \tmop{var}})_N$ is tight, we combine~\eqref{eq:existence-pr3}, \eqref{eq:existence-pr4}, \eqref{eq:existence-pr2} and we use the same Besov-variation embedding as in Lemma~\ref{lem:sufficient-energy} to deduce~\eqref{eq:existence-pr5}.
\end{proof}
\subsection{Uniqueness proof: Duality of semigroup and energy solutions}\label{sec: Uniqueness proof: Duality of semigroup and energy solutions}

To prove the uniqueness of energy solutions, we will show that every energy solution solves the martingale problem for the operator $(\mathcal D_{\mathrm{max}}, \mathcal L)$ constructed in Proposition~\ref{constructioncandidatesemigroup} and Lemma~\ref{corepropertycyl}. For that purpose, we first extend the weak solution property from linear test functions to cylinder functions.

\begin{lemma}[It{\^o} formula]\label{lem:Ito}
  Let $u$ be an energy solution and let $F \in \mathcal{C}$. Then 
  \[ F(u_t) - F(u_0) - I (\mathcal{L}F)_t, \qquad t \geqslant 0, \]
  is a continuous martingale.
\end{lemma}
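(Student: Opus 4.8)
The plan is to reduce the claim to a finite-dimensional It\^o formula for the $\mathbb{R}^m$-valued process $X_t := u_t(f) = (u_t(f_1),\dots,u_t(f_m))$ attached to $F(\eta) = \Phi(\eta(f))$, and then to match the terms of the expansion with the generator decomposition $\mathcal{L} = \mathcal{L}_0 + \mathcal{G}$. By the weak solution property~(iv) of Definition~\ref{def:energy}, each coordinate satisfies $X^i_t = X^i_0 - \int_0^t u_s(A f_i)\,\mathd s + I(\langle {:}B{:}, f_i\rangle_{V^{\ast},V})_t + M^{f_i}_t$, where $M = (M^{f_i})_i$ is a continuous martingale with $[M^{f_i}, M^{f_j}]_t = 2t\langle A f_i, f_j\rangle_H$ by polarisation, and the nonlinear drift $I(\langle{:}B{:},f_i\rangle_{V^{\ast},V})$ has finite $p$-variation for some $p<2$ on compacts, hence vanishing quadratic variation. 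Thus $X$ is a Dirichlet process but \emph{not} a semimartingale, and $\mathcal{L}F = \mathcal{L}_0 F + \mathcal{G}F \in \mathcal{H}^{-1}_0$ (by Lemmas~\ref{lem:L0-bounds} and~\ref{lem:G-bounds}, using $F\in\mathcal{C}\subset\mathcal{H}^1_1$), so that $I(\mathcal{L}F)$ is well defined via the extension of $I$ to $\mathcal{H}^{-1}_0$ afforded by the energy estimate~\eqref{eq:energy-estimate}.

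Since $X$ is not a semimartingale, the classical It\^o formula does not apply directly. I would instead regularise the nonlinear drift using the sequence $(\rho_N)$ from property~(iv): set $X^N_t := X_0 - \int_0^t u_s(A f)\,\mathd s + \int_0^t \langle {:}\tilde{B}^{\rho_N}(u_s,u_s){:}, f\rangle_{V^{\ast},V}\,\mathd s + M_t$, which \emph{is} a genuine continuous semimartingale because the regularised drift is absolutely continuous in $t$, and which converges to $X$ uniformly on compacts in probability by~(iv). Applying the standard multidimensional It\^o formula to $\Phi(X^N_t)$ and letting $N\to\infty$, the stochastic integral converges to a continuous martingale $\tilde{M}_t := \int_0^\cdot \nabla\Phi(X_s)\,\mathd M_s$; the second-order bracket term converges to the integral along $u$ of the second-order part of $\mathcal{L}_0 F$ in~\eqref{eq:L0-on-cylinder}, the linear drift to the integral of its first-order part, and together these give $I(\mathcal{L}_0 F)_t$; and $\Phi(X^N_t)\to F(u_t)$. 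Collecting the limits yields $F(u_t) - F(u_0) - I(\mathcal{L}F)_t = \tilde{M}_t$, a continuous martingale; continuity of the left-hand side follows from the path regularity of $u$ together with the $C([0,T])$-valued nature of $I$ on $\mathcal{H}^{-1}_0$.

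The main obstacle is identifying the limit of the regularised nonlinear drift with $I(\mathcal{G}F)$. Writing $H_N := \sum_i \partial_i\Phi(\cdot(f))\,\langle{:}\tilde{B}^{\rho_N}(\cdot,\cdot){:}, f_i\rangle_{V^{\ast},V} \in L^2(\mu)$, I would first reduce, via the energy estimate~\eqref{eq:energy-estimate}, the convergence $I(H_N)\to I(\mathcal{G}F)$ in $L^1(\Omega, C([0,T]))$ to the convergence $H_N\to\mathcal{G}F$ in $\mathcal{H}^{-1}_0$; the latter is obtained by re-running the duality computation behind Lemmas~\ref{lem:G-bounds} and~\ref{lem:G-approximation}, carrying the test-function correction $f_i - \rho_N f_i$ and using $\|\rho_N f_i - f_i\|_V \le C_1(f_i)\varepsilon_N$ with $\mathbb{E}[C_1(D F)^2]<\infty$, exactly as in the linear case~\eqref{eq:B-tilde-speed}. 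The genuine difficulty, already met in Lemma~\ref{lem:sufficient-energy} and in the proof of Theorem~\ref{thm:exisence}, is that the $L^2(\mu)$ size of the regularised drift blows up like $\|\rho_N\|_{L(H,V)}\lesssim\varepsilon_N^{-\alpha}$, so that the discrepancy between the prefactor $\nabla\Phi(X^N_s)$ of the It\^o expansion and the prefactor $\nabla\Phi(u_s(f))$ of $H_N$, namely $\int_0^t [\nabla\Phi(X^N_s) - \nabla\Phi(u_s(f))]\cdot\langle{:}\tilde{B}^{\rho_N}(u_s,u_s){:}, f\rangle_{V^{\ast},V}\,\mathd s$, cannot be controlled by incompressibility alone: it pairs a factor tending to $0$ uniformly with one of diverging size. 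I would resolve this precisely as in the existence proof, via the interpolation/Besov-variation argument balancing the rate $\varepsilon_N$ against the blow-up $\varepsilon_N^{-\alpha}$ across dyadic time scales, coupling the regularisation level $N$ to the scale $|t-s|$. I expect this balancing to be the technically heaviest step; once it is in place, the martingale property of $\tilde{M}$ follows by testing $F(u_t) - F(u_0) - I(\mathcal{L}F)_t$ against bounded $\mathcal{F}_s$-measurable functionals and passing the above limits through.
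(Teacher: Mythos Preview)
Your overall architecture---reduce to an It\^o formula for the $\mathbb{R}^m$-valued process $X_t = u_t(f)$, then identify the drift terms with $I(\mathcal{L}_0 F)$ and $I(\mathcal{G}F)$---is correct, and the reduction of the identification $I(H_N) \to I(\mathcal{G}F)$ to an $\mathcal{H}^{-1}_0$ convergence via the energy estimate is exactly what is needed. However, you are missing the tool that makes the argument clean: \emph{Young integration}. The paper's proof (following \cite[Lem.~4.14]{Gubinelli2020}) does not regularise the process $X$ at all. Since $\mathcal{B}^{f_i} \assign I(\langle{:}B{:},f_i\rangle)$ has finite $p$-variation for some $p<2$ and $s \mapsto \nabla\Phi(X_s)$ is continuous (hence of finite $q$-variation for every $q$, with $1/p+1/q>1$), the Young integral $\int_0^t \nabla\Phi(X_s)\,\mathd\mathcal{B}^f_s$ is well defined, and an It\^o formula for processes of the form ``continuous semimartingale plus finite $p$-variation term'' applies directly to $\Phi(X_t)$. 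The identification of $\int_0^t \nabla\Phi(X_s)\,\mathd\mathcal{B}^f_s$ with $I(\mathcal{G}F)_t$ then uses the continuity of the Young integral with respect to $p$-variation of the driver: since $\mathcal{B}^{f,N} \assign \int_0^\cdot \langle{:}\tilde{B}^{\rho_N}{:},f\rangle\,\mathd s \to \mathcal{B}^f$ in $p$-variation by~(iv), one has $\int_0^t \nabla\Phi(X_s)\,\mathd\mathcal{B}^{f,N}_s \to \int_0^t \nabla\Phi(X_s)\,\mathd\mathcal{B}^f_s$; but the left-hand side equals the ordinary integral $\int_0^t H_N(u_s)\,\mathd s$, and the $\mathcal{H}^{-1}_0$ convergence you describe finishes the job.

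Your ``technically heaviest step''---the prefactor mismatch between $\nabla\Phi(X^N_s)$ and $\nabla\Phi(X_s)$---is an artefact of regularising the \emph{process} rather than just the integral. Even within your framework it dissolves once you view $\int_0^t \nabla\Phi(X^N_s)\,\mathd\mathcal{B}^{f,N}_s$ as a Young integral: since $X^N - X = \mathcal{B}^{f,N} - \mathcal{B}^f \to 0$ in $p$-variation, so does $\nabla\Phi(X^N) - \nabla\Phi(X)$ (smooth composition preserves $p$-variation), while $\|\mathcal{B}^{f,N}\|_{p\text{-var}}$ stays bounded because it converges in that norm; Young's inequality then controls the difference directly. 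The Besov--variation balancing you propose is not incorrect, but it reproves by hand a stability estimate that Young integration delivers for free---which is precisely why the $p$-variation convergence was built into part~(iv) of Definition~\ref{def:energy}.
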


\begin{proof} 
   This follows by the same arguments as \cite[Lem.~4.14]{Gubinelli2020}, relying on It\^o's formula, Young integration and the convergence of  $\int_0^\cdot \langle {:} \tilde B^{\rho_N} (u_s, u_s) {:}, f  \rangle_{V^{\ast}, V} \mathd s$ to $I (\langle {:} B (\cdummy, \cdummy) {:}, f \rangle_{V^{\ast}, V})$ in $p$-variation. 
\end{proof}

\begin{theorem}[Uniqueness of energy solutions]\label{thm:uniqueness}
  Let $\nu \ll \mu$ with $\frac{\mathd \nu}{\mathd \mu} \in L^2 (\mu)$ and let
  $(u_t)_{t \geqslant 0}$ be an energy solution with $u_0 \sim \nu$. Then $u$
  is a solution to the martingale problem for $(\mathcal{D}_{\max}, \mathcal{L})$, i.e. for any $F \in \mathcal{D}_{\max}$ it holds that
  \[ F (u_t) - F (u_0) - \int_0^t \mathcal{L} F (u_s) \mathd s, \qquad t
     \geqslant 0, \]
  is a martingale in the filtration generated by $u$. Consequently, the law of
  $u$ is uniquely determined by its initial condition, it is a Markov process
  and it has $\mu$ as an invariant measure.
\end{theorem}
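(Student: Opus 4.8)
The plan is to prove the martingale problem first and then read off uniqueness, the Markov property and invariance from the duality with the semigroup $(P_t)$ of Proposition~\ref{constructioncandidatesemigroup}.

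\emph{Step 1 (Martingale problem on $\mathcal{D}_{\max}$).} Lemma~\ref{lem:Ito} already gives the martingale property for cylinder functions, so it suffices to approximate a general $F \in \mathcal{D}_{\max}$ by $\mathcal{C}$. Setting $F^{\sharp} \assign (1 -\mathcal{L}) F \in L^2(\mu) \subset \mathcal{H}^{-1}_0$, I use the density of $(1 -\mathcal{L})\mathcal{C}$ in $\mathcal{H}^{-1}_0$ from Lemma~\ref{corepropertycyl} to pick $G_n \in \mathcal{C}$ with $(1 -\mathcal{L}) G_n \to F^{\sharp}$ in $\mathcal{H}^{-1}_0$. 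The coercivity $\langle (1 -\mathcal{L}) G, G \rangle \simeq \| G \|_{\mathcal{H}^1_0}^2$, which holds on $\mathcal{C}$ because $\mathcal{G}$ is antisymmetric so that the form reduces to that of $1 -\mathcal{L}_0$, makes $(G_n)$ Cauchy in $\mathcal{H}^1_0$, and injectivity of $1 -\mathcal{L}$ identifies the limit with $F$. Hence $G_n \to F$ in $\mathcal{H}^1_0$, in particular in $L^2(\mu)$, and $\mathcal{L} G_n = G_n - (1 -\mathcal{L}) G_n \to \mathcal{L} F$ in $\mathcal{H}^{-1}_0$.

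\emph{Step 1, passage to the limit.} I then pass the process $G_n(u_t) - G_n(u_0) - I(\mathcal{L} G_n)_t$, a martingale by Lemma~\ref{lem:Ito}, to its limit. Incompressibility gives $\mathbb{E}[|G_n(u_t) - F(u_t)|] \lesssim \| G_n - F \|_{L^2(\mu)} \to 0$, and the energy estimate gives $\mathbb{E}[\sup_{t \leqslant T} |I(\mathcal{L} G_n)_t - I(\mathcal{L} F)_t|] \lesssim (T^{1/2} + T) \|\mathcal{L} G_n - \mathcal{L} F \|_{\mathcal{H}^{-1}_0} \to 0$; here $I(\mathcal{L} F)_t = \int_0^t \mathcal{L} F(u_s)\,\mathd s$ because $\mathcal{L} F \in L^2(\mu)$. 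As a locally uniform $L^1$-limit of martingales, $F(u_t) - F(u_0) - \int_0^t \mathcal{L} F(u_s)\,\mathd s$ is a martingale in the filtration of $u$, which is the martingale problem for $(\mathcal{D}_{\max}, \mathcal{L})$.

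\emph{Step 2 (uniqueness, Markov property, invariance).} To close the argument I exploit that $\mathcal{L}$ generates $(P_t)$. The key point is a time-dependent martingale property: for $\psi$ in a core (e.g. $\mathcal{D}(\mathcal{L}^2)$) and fixed $T > 0$, the process $(P_{T-t}\psi)(u_t)$, $t \in [0, T]$, is a martingale. I would derive this from Step 1 applied to the frozen functions $P_{T - s_{j+1}}\psi \in \mathcal{D}_{\max}$ along a partition $0 = s_0 < \dots < s_m = t$, using $\partial_t P_{T-t}\psi = -\mathcal{L} P_{T-t}\psi$ so that the drift terms telescope against the time-increments of $s \mapsto P_{T-s}\psi$ as the mesh tends to zero, the remainders being controlled by incompressibility and the energy estimate. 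Setting $t = T$ yields $\mathbb{E}[\psi(u_T)\mid\mathcal{F}_s] = (P_{T-s}\psi)(u_s)$, i.e. the Markov property with transition semigroup $(P_t)$; by the tower property and backward induction this determines every finite-dimensional distribution $\mathbb{E}[\psi_1(u_{t_1})\cdots \psi_n(u_{t_n})]$ in terms of $(P_t)$ and $h_0 \assign \mathd\nu/\mathd\mu$, so the law of $u$ is unique. Finally, for $\nu = \mu$ we have $h_0 = 1$, and since $\langle\mathcal{L} F, 1\rangle_{L^2(\mu)} = 0$ for $F \in \mathcal{D}_{\max}$, because $\mathcal{L}_0$ is symmetric with $\mathcal{L}_0 1 = 0$ and $\mathcal{G}$ is antisymmetric with $\mathcal{G} 1 = 0$, the expectation $\int P_t F\,\mathd\mu$ is constant in $t$, so $\mu$ is invariant.

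\emph{Main obstacle.} The genuinely delicate point is the time-dependent martingale property of Step 2: transferring the time-independent martingale problem to the backward Kolmogorov equation needs control of the Riemann-sum remainders and enough regularity of $t \mapsto P_{T-t}\psi$ in the graph norm of $\mathcal{L}$. This formulation is precisely what lets us avoid conditioning on the point mass $\delta_{u_s}$, for which the $L^2(\mu)$-density hypothesis behind incompressibility and the energy estimate would break down. The underlying identity is that the density $h_t \in L^2(\mu)$ of $u_t$, which exists by incompressibility, solves the weak forward equation $\langle F, h_t\rangle = \langle F, h_0\rangle + \int_0^t \langle\mathcal{L} F, h_s\rangle\,\mathd s$, whose dual pairing with $P_{T-s}\psi$ is constant in $s$; everything else in Step 2 is routine.
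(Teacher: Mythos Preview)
Your proposal is correct and follows essentially the same approach as the paper: Step~1 matches the paper's argument (you identify the limit of $(G_n)$ via coercivity and injectivity of $1-\mathcal{L}$ rather than via the boundedness of $\mathcal{R}_1\in L(\mathcal{H}^{-1}_0,\mathcal{H}^1_0)$, but these are equivalent), and your Step~2 spells out the duality argument with the Kolmogorov backward equation that the paper delegates to the reference~[GP21]. The ``main obstacle'' you flag---enough regularity of $t\mapsto P_{T-t}\psi$ in the graph norm of $\mathcal{L}$---is precisely the ingredient the paper isolates when it says the argument only uses the existence of solutions $F\in C^1_T L^2(\mu)\cap C_T\mathcal{D}_{\max}$ to $\partial_t F=\mathcal{L}F$.
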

{\color{red}
\begin{proof}
  To see that $u$ is a solution to the martingale problem for
  $(\mathcal{D}_{\max}, \mathcal{L})$, let $F \in \mathcal{D}_{\max}$. By
  Lemma~\ref{corepropertycyl}, there exists $F^{\sharp} \in L^2 (\mu)$ with $F
  =\mathcal{R}_1 F^{\sharp}$, and again by Lemma~\ref{corepropertycyl} the set
  $(1 -\mathcal{L}) \mathcal{C}$ is dense in $\mathcal{H}^{- 1}_0$. Therefore,
  there exists a sequence $(F^N)_N \subset \mathcal{C}$ with $(1 -\mathcal{L})
  F^N \rightarrow F^{\sharp}$ in $\mathcal{H}^{- 1}_0$. By
  Proposition~\ref{constructioncandidatesemigroup} we have $\mathcal{R}_1 \in L (\mathcal{H}^{-
  1}_0, \mathcal{H}^1_0)$, and therefore the following convergence holds in
  $\mathcal{H}^1_0$:
  \[ F^N =\mathcal{R}_1 (1 -\mathcal{L}) F^N \rightarrow \mathcal{R}_1
     F^{\sharp} = F, \]
  where the first equality follows from the uniqueness of solutions to the
  resolvent equation in Lemma~\ref{corepropertycyl}, as
  \[ (1 -\mathcal{L}) F^N = (1 -\mathcal{L}) (\mathcal{R}_1 (1 -\mathcal{L})
     F^N) . \]
  We now use this convergence to justify that $u$ solves the martingale
  problem for the test function $F$. By the It{\^o} formula in
  Lemma~\ref{lem:Ito}, the process
  \[ F^N (u_t) - F^N (u_0) - I (\mathcal{L}F^N)_t, \qquad t \geqslant 0, \]
  is a continuous martingale, and we will pass to the limit $N \rightarrow
  \infty$ in each term. First observe that $F^N (u_t) - F^N (u_0) \rightarrow
  F (u_t) - F (u_0)$ in $L^1 (\mathbb{P})$, since by the uniform $L^2$ bound for the density
  \ref{Incompressibility}.,
  \[ |\mathbb{E}[(F - F^N) (u_t) - (F - F^N) (u_0)] | \lesssim \|F - F^N
     \|_{L^2 (\mu)} \lesssim \|F - F^N \|_{\mathcal{H}^1_0} \rightarrow 0. \]
  For the integral term, we use that $(1 -\mathcal{L}) F = F^{\sharp}$ and
  combine this with the energy estimate \ref{Energy estimate}. to obtain
  \begin{align*}
    \mathbb{E} [|I (\mathcal{L}(F - F^N))_t |] & =\mathbb{E} [|I ((F - F^N) -
    (F^{\sharp} - (1 -\mathcal{L}) F^N))_t |]\\
    & \lesssim \|(F - F^N) - (F^{\sharp} - (1 -\mathcal{L})
    F^N)\|_{\mathcal{H}^{- 1}_0} \rightarrow 0.
  \end{align*}
  The martingale property is preserved under $L^1$-limits and thus $u$ is a
  solution to the martingale problem for $(\mathcal{D}_{\max}, \mathcal{L})$.
  
  Now uniqueness of $u$, Markov property and invariance of $\mu$ follow by the
  same arguments as in \cite[Theorem~4.8]{Gubinelli2020}: The martingale problem
  extends to time-dependent test functions $F \in C^1_T L^2 (\mu) \cap C_T
  \mathcal{D}_{\max}$ by writing $F (t, u_t) - F (0, u_0) = \sum_{k = 0}^{n -
  1} (F (t_{k + 1}^n, u_{t^n_{k + 1}}) - F (t^n_k, u_{t^n_k}))$ for $t^n_k = k
  t / n$, and using a Taylor expansion of $F$ in $t$, and the martingale
  problem for time-independent test functions in the $u$ variable. This shows
  that
  \[ F (t, u_t) - F (0, u_0) - \int_0^t (\partial_s +\mathcal{L}) F (s, u_s)
     \mathd s, \qquad t \in [0, T], \]
  is a continuous martingale, and in particular it has vanishing expectation.
  Now it suffices to take $F (t, u) = P_{T - t} G (u)$ for $G \in
  \mathcal{D}_{\max}$ and $(P_t)_{t \geqslant 0}$ the semigroup from
  Proposition~\ref{constructioncandidatesemigroup}, which leads to
  \[ \mathbb{E} [G (u_T)] =\mathbb{E} [P_T G (u_0)], \]
  so we obtain the uniqueness of one-dimensional marginals. The usual iteration as in
  \cite[Theorem~4.4.2]{Ethier1986} or \cite[Theorem~4.8]{Gubinelli2020} then
  yields uniqueness of the finite-dimensional distributions.
\end{proof}
}

\begin{remark}[Ergodicity of energy solutions]\label{rmk:ergodicity}
  Assume $v = 0$ is the only element of $V$ with $\| v \|_{\dot{V}} = 0$.
  Then the invariant measure $\mu$ is ergodic for the Markov process $u$ from
  the previous theorem. Indeed, it suffices to show that if $F \in
  \mathcal{D}_{\max}$ solves $\mathcal{L}F = 0$, then $F$ is a.s. constant.
  But we know from Lemma~\ref{corepropertycyl} that
  \[ 0 = \langle -\mathcal{L}F, F \rangle_{\mathcal{H}^{- 1}_0,
     \mathcal{H}^1_0} = \langle -\mathcal{L}_0 F, F \rangle_{\mathcal{H}^{-
     1}_0, \mathcal{H}^1_0} \simeq \mathbb{E} [\| D F \|_{\dot{V}}^2], \]
  so by assumption $D F = 0$ a.s. and thus $F$ is a.s. constant.
\end{remark}

\section{Examples}\label{sec:examples}

Here we discuss in detail the examples mentioned in the introduction. In each example we identify $H,V,A,B,(\rho_N)$ and verify the conditions in Assumptions~\ref{ass:A} and~\ref{ass:B}. For Assumption~\ref{ass:B} we do not explicitly verify that  $\| B (v, w) \|_{\dot{V}^{\ast}} \lesssim \|v\|_V \|w\|_H + \|v\|_H \|w\|_V$, and instead we only show the stronger condition $\| B (\varphi) \|_{\dot{V}^{\ast}} \lesssim \| \varphi \|_{V \otimes_s H}$, which controls also $B(v,w) = B (v \otimes w)$.

\begin{example}[Multi-component Burgers equation with Dirichlet boundary
conditions on $(0, 1)$ or $(0, \infty)$]\label{ex:Dirichlet}
  Consider the multi-component stochastic Burgers equation on $D \in \{ (0,
  1), (0, \infty) \}$ with homogeneous Dirichlet boundary conditions: Formally,
  $u : \mathbb{R}_+ \times D \rightarrow \mathbb{R}^d$ with
  \[ \partial_t u^i = \Delta u^i + {: }\sum_{j, k} \Gamma^i_{j k} \partial_x
     (u^j u^k) {:} + \sqrt{2 (- \Delta)} \xi^i, \]
  where $\Gamma$ is symmetric in its $3$ arguments. We choose $H = L^2 (D
  \times \{ 1, \ldots, d \}) = L^2 (D)^d$, the smooth functions are $ S =
  C^{\infty}_c (D \times \{ 1, \ldots, d \}) = C^{\infty}_c (D)^d$, and $V =
  H^1_0 (D \times \{ 1, \ldots, d \}) = H^1_0 (D)^d$ with norm
  \[ \| v \|_V^2 = \sum_j (\| v^j \|_{L^2}^2 + \| \partial_x v^j \|_{L^2}^2),
  \]
  and $\| v \|_{\dot{V}}^2 = \| v \|_V^2 - \| v
  \|_H^2 = \sum_j \| \partial_x v^j \|_{L^2}^2$.

\textcolor{red}{We define approximations similarly to ~\cite{Goncalves2020} by averaging over a rescaled indicator function with ''slowed" shifting near the boundary in order to stay inside of the domain.
More precisely, set $x_{\text{max}}=\sup_{y\in D}y\in\{1,\infty\}$, and with interpretation $\infty+r=\infty$, $r\in\mathbb{R}$, we let $p_N : D \to (0,x_{\text{max}}-1/N)$ be a $C^1$, monotone function such that $p_N(x) = x$ for $x \in (0,\,x_{\text{max}}-2/N)$. We additionally assume $p_N$ satisfies
\begin{equation}
0 < \inf_{x,N} p_N'(x)
\leqslant \sup_{x,N}  p_N'(x) = 1.\label{pN_properties}
\end{equation}
We define
\[ \rho_N h (x) = \int_D N \tmmathbf{1}_{I^N_x} (y) h (y) \mathd y,\qquad \text{for} \qquad I_x^N = 
     (p_N(x), p_N(x) + 1 / N) ,
\]
which can be written as
\begin{equation}\label{eq: rhoN as convolution}
    \rho_N h=(\iota_N\ast h)\circ p_N,\qquad \text{for} \qquad \iota_N=N 1_{(-1,0)}(N\cdot).
\end{equation}
}
We will also consider the Sobolev norms
\[ \| v \|_{H^{\alpha}}^2 = \left\{\begin{array}{ll}
     \| v \|_{L^2}^2, & \alpha = 0,\\
     \| v \|_{L^2}^2 + \int_{D^2} \frac{| v (x) - v (y) |^2}{| x - y |^{1 +
     \alpha}} \mathd x \mathd y, & \alpha \in (0, 1),\\
     \| v \|_{L^2}^2 + \| \partial_x v \|_{L^2}^2, & \alpha = 1,
   \end{array}\right. \]
extended to vector-valued functions $v$ as above, and $H^\alpha_0$ is the closure of the smooth compactly supported functions in $H^\alpha$. The homogeneous norms $\| v \|_{\dot H^{\alpha}}^2$ are defined by omitting the squared $L^2$ norm on the right hand side.

Let us verify the assumptions on $A = - \Delta$: For $f, g \in  S$ we obtain,
  using that $f^j, g^j$ are compactly supported in $D$ to justify the
  integration by parts,
  \[ \langle A f, g \rangle_{V^{\ast}, V} = \sum_j \langle \partial_x f^j,
     \partial_x g^j \rangle_{L^2} = \langle f, A g \rangle_{V^{\ast}, V}, \]
  and thus
  \[
    \langle A f, g \rangle_{V^{\ast}, V}  =  \sum_j \langle \partial_x f^j,
    \partial_x g^j \rangle_{L^2}  \leqslant  \left( \sum_j \| \partial_x f^j \|^2_{L^2} \right)^{1 / 2}
    \left( \sum_j \| \partial_x f^j \|^2_{L^2} \right)^{1 / 2} \simeq  \| f \|_{\dot{V}} \| g \|_{\dot{V}},
  \]
  and similarly
  \[ \langle A f, f \rangle_{V^{\ast}, V} = \sum_j \| \partial_x f^j
     \|^2_{L^2} = \| f \|_{\dot{V}}^2 . \]
  To verify the assumptions on $B (f, g) = \sum_{j, k} \Gamma^i_{j k}
  \partial_x (f^j g^k)$, we first observe that $V \otimes_s H \subset H
  \otimes_s H \simeq L^2_s ((D \times \{ 1, \ldots, d \})^2)$, where the
  subscript $s$ means that we consider functions that are symmetric in the
  sense that $\varphi^{i_1, i_2} (x_1, x_2) = \varphi^{i_2, i_1} (x_2, x_1)$,
  and
  \[ \| \varphi \|_{V \otimes_s H}^2 = \sum_{i_1, i_2} \int_{D^2} \left( |
     \varphi^{i_1, i_2} (x, y) |^2 + \frac{1}{2} (| \partial_x \varphi^{i_1,
     i_2} (x, y) |^2 + | \partial_y \varphi^{i_1, i_2} (x, y) |^2) \right)
     \mathd x \mathd y \simeq \| \varphi \|_{H^1_0 (D^2)^{d^2}}^2, \]
  so that we can identify $V \otimes_s H$ with a symmetric subspace of $H^1_0
  (D^2)^{d^2}$. With the trace operator $T\varphi(x) = \varphi(x,x)$ we obtain for smooth and compactly supported $f, \varphi$ and $\alpha \in [0,1]$ 
    \begin{align}\label{estimateB}
    \langle B (\varphi), f \rangle_{V^{\ast}, V} & = \sum_{i, j, k}
    \Gamma^i_{j k} \int_D \partial_x \varphi^{j, k} (x, x) f^i (x) \mathd x \lesssim \sum_{i, j, k} \|\partial_x T \varphi^{j,k}\|_{(\dot{H}^\alpha_0)^\ast} \|f_i\|_{\dot{H}^\alpha}.
  \end{align}
  By integration by parts we see that the derivative $\partial_x$ is in $L(\dot{H}^1,L^2)\cap L(L^2,(\dot{H}^1_0)^\ast)$, so by interpolation it is also in $L(H^{1-\alpha},(\dot{H}^{\alpha}_0)^\ast)$. Moreover, by Lemma \ref{lem:trace} the trace operator is continuous from $H^{\beta} (D^2)$ to $H^{\beta - \frac{1}{2}} (D)$. Therefore, we can bound for $\alpha<1$
  \begin{equation}\label{esttrace}
    \|\partial_x T \varphi\|_{(\dot{H}^\alpha_0)^\ast} \lesssim \| T\varphi\|_{H^{1-\alpha}} \lesssim \| \varphi\|_{H^{3/2 - \alpha}(D^2)}.
  \end{equation}
  For $\alpha=1/2$ we obtain the required estimate for $B:V\otimes_s H \to \dot{V}^\ast$. Moreover, using the symmetry of $\Gamma$ in its arguments $i, j, k$ and
  integration by parts, we see that for $f \in  S$
  \[ \langle B (f, f), f \rangle_{V^{\ast}, V} = \sum_{i, j, k} \Gamma^i_{j k}
     \int_D \partial_x (f^j (x) f^k (x)) f^i (x) \mathd x = 0, \qquad f \in  S.
  \]
  Our remaining task is to verify that the approximation $(\rho_N)$ has the required properties. We know from Lemma~\ref{lem:rhoN-bounds} that $\sup_N \| \rho_N \|_{L (H, H)} + \| \rho_N \|_{L (\dot{V}, \dot{V})} < \infty$ and $\|\rho_N\|_{L(H,V)} \lesssim N$, and for $f \in  S$ and $\varphi \in \mathcal C^\infty_c(D^2)$ we obtain
  \begin{align*}
    \| \rho_N f - f \|_V & \lesssim N^{-1} \|f\|_{H^2},
  \end{align*}
  {\color{red} as well as by (\ref{estimateB}) with $\alpha=0$, Lemma~\ref{lem:rhoN-bounds} and (\ref{esttrace}),
  \begin{align*}
    \langle B(\varphi),(\rho_N-\text{Id})f\rangle_{V^\ast,V} \lesssim \|\varphi\|_{H^{3/2}(D^2)}\|(\rho_N-\text{Id})f\|_{L^2}\lesssim N^{-1}\|\varphi\|_{H^{3/2}(D^2)}\|f\|_{H^1} ,
  \end{align*}}
  and also for any $\kappa \in (0,\tfrac12)$,
  \begin{align}
    | \langle B(\rho_N^{\otimes 2} \varphi-\varphi), f \rangle_{V^{\ast}, V} | & \lesssim  \|\rho_N^{\otimes 2} \varphi-\varphi\|_{H^{1/2+\kappa}(D^2)} \|f\|_{\dot{H}^{1-\kappa}} \lesssim  N^{\kappa-1/2}\|\varphi\|_{H^1(D^2)}\|f\|_{H^{1}}, \label{eq:Dirichlet-B-bound}
  \end{align}
  where the last step follows by another application of Lemma~\ref{lem:rhoN-bounds} together with a tensorization argument.
  Therefore, Assumptions~\ref{ass:A} and~\ref{ass:B} are satisfied in this example. The conditions for our existence theorem are satisfied if $D=(0,1)$ (with $e_k(x) = \sqrt{2} \sin(k\pi x)$), but not for $D=(0,\infty)$.
\end{example}
{\color{red}
\begin{remark}
  Although uniqueness holds on non-compact domains such as $(0,\infty)$
  (since it only relies on Assumptions~(A) and~(B)), the existence result of
  Theorem~\ref{thm:exisence} does not apply in that setting because here we assume
  that $A$ admits an orthonormal basis of eigenvectors, which fails
  for the Laplacian on $(0,\infty)$. See Remark~\ref{rmk:discretize-A} for a possible strategy that could prove the existence of energy solutions also in that case.
\end{remark}
}

To simplify the presentation we restrict to scalar-valued equations from now on, but in all of the following examples it would also be possible to consider systems of equations. 

\begin{example}[Burgers equation with elliptic differential operator and
Dirichlet Boundary conditions]
  We consider the same equation as before, again on a domain $D \in \{ (0, 1),
  (0, \infty) \}$, and for simplicity we assume $d = 1$ (scalar-valued
  equation). We change the operator to
  \[ A v = - \partial_x (a \partial_x v), \]
  for a measurable function $a$ such that there exists $\varepsilon > 0$ with
  $a (x) \in [\varepsilon, \varepsilon^{- 1}]$ for all $x \in D$. The spaces
  $H,  S, V, \dot{V}$ have the same definition as before, as does the approximation $(\rho_N)$. Since $f, g \in  S$
  have compact support, we obtain
  \[\langle A f, g \rangle_{V^\ast, V} = - \int_D \partial_x (a \partial_x f) (x) g (x) \mathd x = \int_D a(x) \partial_x f (x) \partial_x g (x) \mathd x,\]
  and thus
  \[ \langle A f, g \rangle_{V^{\ast}, V} \leqslant \varepsilon^{-1} \|
     \partial_x f \|_{L^2} \| \partial_x g \|_{L^2} \lesssim \| f \|_{\dot{V}}
     \| g \|_{\dot{V}}, \qquad \langle A f, f \rangle_{V^{\ast}, V} \geqslant \varepsilon \| \partial_x
     f \|^2_{L^2} = \varepsilon \| f \|_{\dot{V}}^2 . \]
\end{example}

\begin{example}[Hyperviscous Burgers equation with Neumann boundary conditions on {$(0,1)$}]
  Let $\theta \in (1, 2]$. We consider the equation (for simplicity
  scalar-valued)
  \[ \partial_t u = - (1 - \Delta)^{\theta} u + {:} \left( \partial_x u^2 -
     \frac{2}{3} u^2 (\delta_1 - \delta_0) \right) {:} + \sqrt{2 (1 -
     \Delta)^{\theta}} \xi \]
  on $(0, 1)$, where $\Delta$ is the Laplacian with homogeneous Neumann
  boundary conditions and we define $(1 - \Delta)^{\theta}$ using functional
  calculus, which can be made explicit with the eigenbasis $(\cos (\pi k
  \cdummy))_{k \in \mathbb{N}_0}$ of $\Delta$. The term $- \frac{2}{3} u^2
  (\delta_1 - \delta_0)$ represents a boundary renormalization that is
  necessary to guarantee the property $\langle B (f, f), f \rangle_{V^{\ast},
  V} = 0$, and for $\theta = 1$ (which we exclude!) we could formally interpret it as a change of
  homogeneous Neumann boundary conditions to nonlinear Robin type boundary
  conditions $u' (1) = \frac{2}{3} u^2 (1)$ and $u' (0) = \frac{2}{3} u^2
  (0)$, see~\cite{Goncalves2020} for a detailed discussion
  in a related context. We take $H = L^2 ((0, 1))$, the smooth functions are
  $ S = \{ f \in C^{\infty} ([0, 1]) : f' (0) = f' (1) = 0 \}$, and $V$ is the closure of $ S$ in $H^{\theta} ((0, 1))$, where with the orthonormal basis of eigenfunctions $(e_k)_{k \in \N_0}$ of $(1-\Delta)$ given by $e_k = \sqrt 2 \cos(k\pi x)$ for $k>0$ and $e_0 = 1$,
  \[
    \| f\|_{H^\alpha}^2 := 2 \sum_{k=0}^\infty (1+k^2)^{\alpha} \langle f, e_k\rangle_H^2,
  \]
  and $H^\alpha$ consists of those functions in $L^2$ for which the norm is finite. Due to the factor $2$ in front of the sum and since $(e_k)$ is an orthonormal basis in $H$, we have $\dot V = V$.   

  We use the spectral approximation
  \[
    \rho_N h(x) = \sum_{k=0}^N e_k(x) \langle h, e_k \rangle_H.
  \]
  The estimates for $A$ follow by a simple explicit computation using the orthonormal basis $(e_k)$, so let us focus on the bound for $B$: We can identify $V \otimes_s H$ with a subspace of $H^{\theta} ((0, 1)^2)$, and for smooth
  $\varphi$ in this space and for $f \in  S$ we have with $T \varphi (x) =
  \varphi (x, x)$ for any $\alpha \in [0,1)$ and any $\kappa>0$
  \begin{align*}
    \langle B (\varphi), f \rangle_{V^{\ast}, V} & =  \int_0^1 \partial_x \varphi (x, x) 
    f (x) \mathd x - \frac{2}{3} (\varphi (1, 1) f (1) - \varphi (0, 0) f
    (0))\\
    & \lesssim  \| \partial_x T \varphi \|_{H^{-\alpha}} \| f
    \|_{H^{\alpha}} + \| \varphi \|_{\infty} \| f \|_{\infty} \lesssim  \| T \varphi \|_{H^{3/2+\kappa-\alpha}} \| f \|_{H^{\alpha}} +
    \| \varphi \|_{\infty} \| f \|_{\infty} \\
    & \lesssim  \| \varphi \|_{H^{2+\kappa-\alpha}} \| f \|_{H^{\alpha}} + \| \varphi \|_{\infty} \| f \|_{\infty}, 
  \end{align*}
  where we applied Lemma~\ref{lem:Neumann-derivative} from the Appendix to bound $\partial_x$ (here we need $\alpha<1+\kappa$) and we used that $T: H^{2+\kappa-\alpha} \to H^{3/2+\kappa-\alpha}$ as $3/2+\kappa-\alpha > 0$ (see the Fourier based computation in Lemma~\ref{lem:trace} which works similarly for the Neumann Sobolev spaces). Moreover, for any $\beta>1$ we can bound $\|\varphi\|_\infty \lesssim \|\varphi\|_{H^\beta}$ and $\|f\|_\infty \lesssim \|f\|_{H^{\beta/2}}$, again by a spectral computation. Therefore, using that $\theta>1$, we can find a small $\varepsilon>0$ such that
  \[
    \langle B (\varphi), f \rangle_{V^{\ast}, V}\lesssim \| \varphi \|_{H^{\theta-\varepsilon}} \| f \|_{H^{\theta-\varepsilon}}.
  \]
  Moreover,  due to the boundary renormalization we obtain for all $f \in  S$
  \[ \langle B (f, f), f \rangle_{V^{\ast}, V} = \int_0^1 \frac{2}{3}
     \partial_x f^3 (x) \mathd x - \frac{2}{3} (f (1)^3 - f (0)^3) = 0. \]
  Another simple Fourier computation shows that $\rho_N$ satisfies the same bounds as in Lemma~\ref{lem:rhoN-bounds}, actually for all $0\leqslant \alpha \leqslant \beta$, without requiring $\beta\leqslant 1$. Therefore, the bounds $\|\rho_N f - f\|_{V} \lesssim N^{-\theta} \|f\|_{H^{2\theta}}$ and
  \[
    |\langle B (\rho_N^{\otimes 2}\varphi), f \rangle_{V^{\ast}, V} - \langle B (\varphi), f \rangle_{V^{\ast}, V}| \lesssim N^{-\varepsilon} \|\varphi\|_{V\otimes_s H} \|f\|_{\dot V}
  \]
  follow as in Example~\ref{ex:Dirichlet}. 
     
     We have shown that Assumptions~\ref{ass:A} and~\ref{ass:B} are satisfied in this example, and we already heavily used that $A$ can be diagonalized, so that we also obtain the existence of energy solutions. Unfortunately we required $\theta > 1$ both for estimating the integral part and the boundary part of $B$, and thus we are unable to solve the non-fractional stochastic Burgers
  equation with (renormalized) homogeneous Neumann boundary conditions. 
\end{example}

\begin{example}[Stochastic Burgers equation with regional fractional
Laplacian]
  We consider the domain $D = (0, 1)$ and $\gamma \in \left[ \frac{3}{2}, 2
  \right)$ and the equation studied in \cite{Cardoso2024},
  formally $u : \mathbb{R}_+ \times D \rightarrow \mathbb{R}$,
  \[ \partial_t u =\mathbb{L}^{\gamma / 2} u + \partial_x u^2 + \sqrt{2
     (-\mathbb{L}^{\gamma / 2})} \xi, \]
  where $\mathbb{L}^{\gamma / 2}$ is the regional fractional Laplacian, given for some fixed $c > 0$ by
  \[ \mathbb{L}^{\gamma / 2} f (x) \assign \lim_{\varepsilon \rightarrow 0} c
     \int_0^1 \tmmathbf{1}_{\{ | y - x | \geqslant \varepsilon \}} \frac{f (y)
     - f (x)}{| y - x |^{1 + \gamma}} \mathd y, \]
  which is well-defined and continuous as a uniform limit if
  \[ f \in  S \assign \{ f \in C^{\infty} ([0, 1]) : f (0) = f' (0) = f (1) =
     f' (1) = 0 \} . \]
  Let $H = L^2 (D)$ and $V$ is the closure of $S$ in the
  Sobolev-Slobodeckij/Besov space $H^{\gamma / 2} (D) = B^{\gamma / 2}_{2, 2}
  (D)$ that already appeared in Example~\ref{ex:Dirichlet}. 
  As $\gamma / 2 < 1$, the constraint $f' (0) = f' (1) = 0$ on the derivative
  of $f \in  S$ has no effect on the closure, which equals $H^{\gamma / 2}_0$.  We have that $\| f \|_{\dot{V}}^2 = \int_0^1 \int_0^1
  \frac{(f (y) - f (x))^2}{| y - x |^{1 + \gamma}} \mathd x \mathd y$. We use the same approximation $\rho_N$ as in Example~\ref{ex:Dirichlet}.
  
  For $f,
  g \in  S$ we have the following integration by parts rule from \cite[Thm.~3.3]{Guan2006}:
  \[ \int_0^1 (-\mathbb{L}^{\gamma / 2}) f (x) g (x) \mathd x = \frac{c}{2}
     \int_0^1 \int_0^1 \frac{(f (y) - f (x)) (g (y) - g (x))}{| y - x |^{1 +
     \gamma}} \mathd x \mathd y \lesssim \| f \|_{\dot{V}} \| g \|_{\dot{V}} .
  \]
  For $f = g$, we obtain
  \[ \int_0^1 (-\mathbb{L}^{\gamma / 2}) f (x) f (x) \mathd x \simeq \| f
     \|_{\dot{V}}^2 . \]
  The estimates for $B$ are similar as in the case of the (usual) Dirichlet
  Laplacian on $(0, 1)$: We can again identify $V \otimes_s H$ with a subspace
  of $H^{\gamma / 2}_0 (D^2)$, and from~\eqref{eq:Dirichlet-B-bound} we obtain for any $\kappa \in (0,1/2)$ the bound on the approximation
  \begin{align}\label{eq:Regional-B-rho-bound}
    &| \langle B(\rho_N^{\otimes 2}\varphi), f \rangle_{V^{\ast}, V} -  \langle B(\varphi), f \rangle_{V^{\ast}, V} |  \leqslant \|\rho_N^{\otimes 2} \varphi-\varphi\|_{H^{1/2+\kappa}(D^2)} \|f\|_{\dot{H}^{1-\kappa}},
  \end{align}
  and similarly without approximation
  \begin{align}\label{fracdireasy}
    | \langle B(\varphi), f \rangle_{V^{\ast}, V} | \leqslant \|\varphi\|_{H^{1/2+\kappa}(D^2)} \|f\|_{\dot{H}^{1-\kappa}}.
  \end{align}
  This second estimate leads to the requirement $\tfrac\gamma2 \leqslant \tfrac34$ or $\gamma \leqslant \tfrac32$,  because if $\gamma$ is smaller than that we cannot achieve $\tfrac12 + \kappa \leqslant \tfrac\gamma2$ and $1-\kappa\leqslant \tfrac\gamma2$ simultaneously. Note that $\gamma=\tfrac32$ corresponds to the scaling critical case. To get a small factor from~\eqref{eq:Regional-B-rho-bound} we take $\kappa \in (0,\tfrac14)$ if $f \in  S$, and we take $\kappa \in (\tfrac14,\tfrac12)$ if $\varphi \in H^{\gamma}_0(D^2) \subset  S \otimes_s  S$, and we apply Lemma~\ref{lem:rhoN-bounds}. {\color{red} The small factor for $\langle B(\varphi), (\rho_N-\text{Id})\cdot \rangle_{V^{\ast}, V}$ follows similarly from (\ref{fracdireasy}) and Lemma~\ref{lem:rhoN-bounds}.}
  Finally, we obtain for $f \in  S$
  \[ \langle B (f, f), f \rangle_{V^{\ast}, V} = \int_D \partial_x (f (x) f
     (x)) f (x) \mathd x = \frac{2}{3} \int_D \partial_x f^3 (x) \mathd x = 0,
     \qquad v \in V. \]
  Therefore, Assumptions~\ref{ass:A} and~\ref{ass:B} are satisfied. Also, $A$ is diagonalizable because $V$ is compactly embedded in $H$, and therefore our existence proof applies. But actually the existence of energy solutions for this example was previously shown by Cardoso and Gon\c{c}alves~\cite{Cardoso2024}, who derive them as scaling limits of fluctuations in long range interacting particle systems. They consider only the stationary case, but the derivation should extend without problems to initial conditions with $L^2$ density or even $L^1$ density as in~\cite{Graefner2024}.

  Apart from $A$ being diagonalizable, everything should extend also to $(0,\infty)$ instead of $(0,1)$.
\end{example}
\appendix\section{Auxiliary computations}

\begin{lemma}
  \label{lem:B-circular}The assumption $\langle B (v, v), v \rangle_{V^{\ast},
  V} \equiv 0$ together with the symmetry and bilinearity of $B$ implies
  \[ \langle B (v_1, v_2), v_3 \rangle_{V^{\ast}, V} + \langle B (v_3, v_1),
     v_2 \rangle_{V^{\ast}, V} + \langle B (v_2, v_3), v_1 \rangle_{V^{\ast},
     V} = 0 \]
  for all $v_1, v_2, v_3 \in V$.
\end{lemma}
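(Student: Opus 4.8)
The plan is to obtain the circular identity by a purely algebraic double polarization of the cubic map $v \mapsto \langle B (v, v), v \rangle_{V^{\ast}, V}$, which vanishes identically on $V$ by hypothesis. No analytic input is required beyond the bilinearity of $B$ and its symmetry $B (f, g) = B (g, f)$; in particular there is no need for any approximation argument, since $\langle B (v, v), v \rangle_{V^{\ast}, V} = 0$ is already assumed for all $v \in V$.

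First I would carry out a single polarization. Fixing $v, w \in V$ and using that $B (v + t w, v + t w) = B (v, v) + 2 t\, B (v, w) + t^2 B (w, w)$ by bilinearity and symmetry, the quantity $\langle B (v + t w, v + t w), v + t w \rangle_{V^{\ast}, V}$ is a cubic polynomial in $t$ that vanishes for every $t \in \mathbb{R}$. Hence all of its coefficients vanish, and reading off the coefficient of $t$ yields
\[ \langle B (v, v), w \rangle_{V^{\ast}, V} + 2 \langle B (v, w), v \rangle_{V^{\ast}, V} = 0. \qquad (\ast) \]

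Next I would polarize $(\ast)$ once more, replacing $v$ by $v_1 + v_2$, expanding by bilinearity, and subtracting the two pure instances of $(\ast)$ corresponding to $v = v_1$ and $v = v_2$. Collecting the surviving cross terms and using $\langle B (v_1, v_2), w \rangle_{V^{\ast}, V} = \langle B (v_2, v_1), w \rangle_{V^{\ast}, V}$ to merge the two copies of the first type, this leaves
\[ \langle B (v_1, v_2), w \rangle_{V^{\ast}, V} + \langle B (v_1, w), v_2 \rangle_{V^{\ast}, V} + \langle B (v_2, w), v_1 \rangle_{V^{\ast}, V} = 0. \]
Renaming $w = v_3$ and applying the symmetry $B (v_1, v_3) = B (v_3, v_1)$ to the middle summand gives exactly the asserted identity.

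I do not expect any genuine obstacle here; the only point to watch is the bookkeeping of which argument slot each use of the symmetry of $B$ acts on, so that the final relabelling lands precisely on the cyclic form in the statement rather than on a different ordering of the three terms.
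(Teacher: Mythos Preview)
Your proof is correct and follows essentially the same polarization strategy as the paper: both first extract the two-variable identity $\langle B(v,v),w\rangle + 2\langle B(v,w),v\rangle = 0$ from the vanishing cubic form (the paper obtains the equivalent relation via the combination $v+w$ and $v-w$ rather than reading off the $t$-coefficient), and then polarize once more by substituting $v = v_1 + v_2$ and subtracting the pure terms. Your presentation via polynomial coefficients is slightly more streamlined, but the argument is the same.
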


\begin{proof}
  By assumption we have for $v, w \in V$
  \begin{align*}
    0 & =  \langle B (v + w, v + w), v + w \rangle_{V^{\ast}, V} + \langle B
    (v - w, v - w), v - w \rangle_{V^{\ast}, V}\\
    & =  \langle B (v, v), w \rangle_{V^{\ast}, V} + 2 \langle B (v, w), v
    \rangle_{V^{\ast}, V} + 2 \langle B (v, w), w \rangle_{V^{\ast}, V} +
    \langle B (w, w), v \rangle_{V^{\ast}, V}\\
    &  \quad - \langle B (v, v), w \rangle_{V^{\ast}, V} - 2 \langle B (v, w), v
    \rangle_{V^{\ast}, V} + 2 \langle B (v, w), w \rangle_{V^{\ast}, V} +
    \langle B (w, w), v \rangle_{V^{\ast}, V}\\
    & =  4 \langle B (v, w), w \rangle_{V^{\ast}, V} + 2 \langle B (w, w), v
    \rangle_{V^{\ast}, V},
  \end{align*}
  so that the claim follows if $v_1 = v_2$, i.e.
  \[ \langle B (w, w), v \rangle_{V^{\ast}, V} = - 2 \langle B (v, w), w
     \rangle_{V^{\ast}, V} . \]
  Now we apply this identity repeatedly to obtain
  \begin{align*}
    &  2 \langle B (v_1, v_2), v_3 \rangle_{V^{\ast}, V} + \langle B (v_1,
    v_1), v_3 \rangle_{V^{\ast}, V} + \langle B (v_2, v_2), v_3
    \rangle_{V^{\ast}, V}\\
    & = \langle B (v_1 + v_2, v_1 + v_2), v_3 \rangle_{V^{\ast}, V}\\
    &  = - 2 \langle B (v_1 + v_2, v_3), v_1 + v_2 \rangle_{V^{\ast}, V}\\
    &  = - 2 \left( \langle B (v_1, v_3), v_1 \rangle_{V^{\ast}, V} +
    \langle B (v_2, v_3), v_2 \rangle_{V^{\ast}, V} + \langle B (v_1, v_3),
    v_2 \rangle_{V^{\ast}, V} + \langle B (v_2, v_3), v_1 \rangle_{V^{\ast},
    V} \right)\\
    &  = \langle B (v_1, v_1), v_3 \rangle_{V^{\ast}, V} + \langle B (v_2,
    v_2), v_3 \rangle_{V^{\ast}, V} - 2 (\langle B (v_1, v_3), v_2
    \rangle_{V^{\ast}, V} + \langle B (v_2, v_3), v_1 \rangle_{V^{\ast}, V}),
  \end{align*}
  and subtracting $\langle B (v_1, v_1), v_3 \rangle_{V^{\ast}, V} + \langle B
  (v_2, v_2), v_3 \rangle_{V^{\ast}, V}$ on both sides and dividing by $2$, we
  end up with
  \[ \langle B (v_1, v_2), v_3 \rangle_{V^{\ast}, V} = - \langle B (v_1, v_3),
     v_2 \rangle_{V^{\ast}, V} - \langle B (v_2, v_3), v_1 \rangle_{V^{\ast},
     V} . \qedhere\]
\end{proof}

We would like to give an alternative proof of
Lemma~\ref{lem:second-Malliavin}, without relying on the chaos expansion. This
is based on the following lemma:

\begin{lemma}
  \label{Ncalculus}{\tmdummy}
  \[ \mathcal{N}D_h = D_h (\mathcal{N}- 1) \tmmathbf{1}_{\mathcal{N} \geqslant
     1}, \qquad \mathcal{N} \delta = \delta (\mathcal{N}+ 1). \]
\end{lemma}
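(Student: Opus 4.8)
The plan is to derive both identities from the single fundamental commutation relation between the Malliavin derivative and the Skorokhod integral, namely $D_h \delta(\psi) = \langle \psi, h \rangle_H + \delta(D_h \psi)$ for $H$-valued $\psi \in \tmop{dom}(\delta)$ (see e.g. \cite{Nualart2006}), rather than from the chaos expansion. Of course the chaos expansion gives an immediate alternative one-line proof, since $\mathcal{N}$ acts as multiplication by $n$ on the $n$-th homogeneous chaos, $D$ lowers the chaos degree by one, and $\delta$ raises it by one, so that both identities reduce to matching scalar prefactors on each chaos; but since the surrounding discussion aims to minimize the use of chaos expansions, the commutator route is the natural one to present.

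First I would establish the identity for $D_h$. Applying the commutation relation with $\psi = D F$ (which is $H$-valued) for a cylinder function $F \in \mathcal{C}$, and using $\langle DF, h\rangle_H = D_h F$ together with the commutativity of second derivatives $D_h D_g F = D_g D_h F$, one obtains $D_h \delta(DF) = D_h F + \delta(D(D_h F)) = D_h F + \mathcal{N}(D_h F)$. Since $\delta(DF) = \mathcal{N}F$ by definition of $\mathcal{N}$, this reads $D_h \mathcal{N} F = D_h F + \mathcal{N} D_h F$, i.e. $[\mathcal{N}, D_h] = - D_h$, which rearranges to $\mathcal{N} D_h = D_h(\mathcal{N}-1)$. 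The projector $\tmmathbf{1}_{\mathcal{N} \geqslant 1}$ in the statement only records that both sides annihilate the zeroth chaos, which is automatic because $D_h$ kills constants.

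Next I would obtain the identity for $\delta$ by duality, using that $\delta = D^{\ast}$ and that $\mathcal{N} = \delta D$ is self-adjoint (acting as $\mathcal{N} \otimes \mathrm{Id}_H$ on the value space). For $\varphi \in \tmop{dom}(\delta)$ and any cylinder function $G$, the previous step in the form $D\mathcal{N}G = (\mathcal{N}+1)DG$ gives
\[ \E[\mathcal{N}\delta(\varphi)\, G] = \E[\delta(\varphi)\, \mathcal{N}G] = \E[\langle \varphi, D\mathcal{N}G\rangle_H] = \E[\langle \varphi, (\mathcal{N}+1)DG\rangle_H], \]
and, moving $\mathcal{N}$ across by its self-adjointness, this equals $\E[\langle (\mathcal{N}+1)\varphi, DG\rangle_H] = \E[\delta((\mathcal{N}+1)\varphi)\, G]$. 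Since cylinder functions are dense, this yields $\mathcal{N}\delta = \delta(\mathcal{N}+1)$.

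I expect no serious obstacle here, as these are standard structural identities for the Ornstein--Uhlenbeck operator. The only care needed is the tensor/value-space bookkeeping, namely identifying $\delta(D(D_h F))$ with $\mathcal{N}(D_h F)$ in the first step and treating $\mathcal{N}$ as $\mathcal{N}\otimes\mathrm{Id}_H$ in the duality step, together with the observation that on cylinder functions everything lies in the relevant domains, so the zeroth-chaos contribution is unambiguously handled by the indicator.
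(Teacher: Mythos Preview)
Your proposal is correct and takes essentially the same approach as the paper: the paper also derives $[\mathcal{N}, D_h]F = -D_hF$ directly from the commutation relation $D_h\delta(\psi) = \langle \psi, h\rangle_H + \delta(D_h\psi)$ applied to $\psi = DF$ (together with $D_hD = DD_h$), and then states that the second identity follows by duality of $\delta$ and $D$. Your write-up merely spells out the duality step that the paper leaves implicit.
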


\begin{proof}
  By the commutation relation of $D_h$ and $\delta$, see in
  \cite[eq.~(1.46)]{Nualart2006}, we have
  \begin{align*}
    {}[\mathcal{N}, D_h] F & =  \delta D D_h F - D_h \delta D F\\
    & =  \delta D_h D F - D_h \delta D F\\
    & =  D_h \delta D F - \langle D F, h \rangle_H - D_h \delta D F\\
    & =  - D_h F.
  \end{align*}
  The second claim follows by duality of $\delta$ and $D$.
\end{proof}

\begin{proof}[Alternative proof of Lemma~\ref{lem:second-Malliavin}]
  Applying the previous result, we have
  \begin{align*}
    \mathbb{E} [\| D^2 F \|_{H \otimes_s \dot{V}}^2] & = \sum_{\ell_1,
    \ell_2} \mathbb{E} \left[ \left\langle D D_{e_{\ell_1}} F, D
    D_{e_{\ell_2}} F \right\rangle_H \right] \langle e_{\ell_1}, e_{\ell_2}
    \rangle_{\dot{V}}\\
    & = \sum_{\ell_1, \ell_2} \mathbb{E} \left[ D_{e_{\ell_1}}
    F\mathcal{N}D_{e_{\ell_2}} F \right] \langle e_{\ell_1}, e_{\ell_2}
    \rangle_{\dot{V}}\\
    & = \sum_{\ell_1, \ell_2} \mathbb{E} \left[ \left( \mathcal{N}^{1 / 2}
    D_{e_{\ell_1}} F \right) \mathcal{N}^{1 / 2} D_{e_{\ell_2}} F \right]
    \langle e_{\ell_1}, e_{\ell_2} \rangle_{\dot{V}}\\
    & = \sum_{\ell_1, \ell_2} \mathbb{E} \left[ D_{e_{\ell_1}}
    ((\mathcal{N}- 1)^{1 / 2} \tmmathbf{1}_{\mathcal{N} \geqslant 1} F)
    D_{e_{\ell_2}} ((\mathcal{N}- 1)^{1 / 2} \tmmathbf{1}_{\mathcal{N}
    \geqslant 1} F) \right] \langle e_{\ell_1}, e_{\ell_2} \rangle_{\dot{V}}\\
    & = \mathbb{E} \left[ \left\| D (\mathcal{N}- 1)^{1 / 2}
    \tmmathbf{1}_{\mathcal{N} \geqslant 1} F \right\|_{\dot{V}}^2  \right] .\qedhere
  \end{align*}
\end{proof}

\begin{lemma}
  \label{lem:tensor-operator}\textcolor{red}{Let $\rho\in L
     (V, V)\cap L(H, H)$.} The tensorization $\rho^{\otimes 2}$ of $\rho$
  satisfies the following bound:
  \[ \| \rho^{\otimes 2} \varphi \|_{V \otimes_s H} \leqslant \| \rho \|_{L
     (V, V)} \| \rho \|_{L (H, H)} \| \varphi \|_{V \otimes_s H}, \qquad
     \varphi \in V \otimes_s H. \]
\end{lemma}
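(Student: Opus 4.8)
The plan is to realise $V\otimes_s H$ as a closed subspace of the full Hilbert space tensor product $V\otimes H$, where the first factor carries the $V$-inner product and the second the $H$-inner product, and then to reduce the claim to the elementary fact that the tensor product of two bounded Hilbert space operators has operator norm equal to the product of the individual norms. Comparing Definition~\ref{def:V-tensor-H} with the inner product on $V\otimes H$, one sees that for $\varphi=\sum_{i,j}a_{ij}v_i\otimes v_j$ the quantity $\|\varphi\|_{V\otimes_s H}^2$ is exactly $\langle\varphi,\varphi\rangle_{V\otimes H}$; hence $\|\cdummy\|_{V\otimes_s H}$ is simply the restriction of $\|\cdummy\|_{V\otimes H}$ to the symmetric tensors, and it suffices to prove $\|\rho^{\otimes 2}\psi\|_{V\otimes H}\le\|\rho\|_{L(V,V)}\|\rho\|_{L(H,H)}\|\psi\|_{V\otimes H}$ for all $\psi\in V\otimes H$.

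For that I would factor $\rho^{\otimes 2}=(\rho\otimes I)(I\otimes\rho)$, with $\rho$ acting as an element of $L(V,V)$ on the first factor and of $L(H,H)$ on the second, and estimate each factor separately by expanding in an orthonormal basis of whichever slot is left untouched. Concretely, to bound $I\otimes\rho$ I would fix an orthonormal basis $(f_j)$ of $V$ and write $\psi=\sum_j f_j\otimes w_j$ with $w_j\in H$, so that $\|\psi\|_{V\otimes H}^2=\sum_j\|w_j\|_H^2$ and
\[ \|(I\otimes\rho)\psi\|_{V\otimes H}^2=\sum_j\|\rho w_j\|_H^2\le\|\rho\|_{L(H,H)}^2\sum_j\|w_j\|_H^2=\|\rho\|_{L(H,H)}^2\|\psi\|_{V\otimes H}^2. \]
Symmetrically, fixing an orthonormal basis $(e_k)$ of $H$ and writing $\psi=\sum_k u_k\otimes e_k$ with $u_k\in V$ gives $\|(\rho\otimes I)\psi\|_{V\otimes H}^2=\sum_k\|\rho u_k\|_V^2\le\|\rho\|_{L(V,V)}^2\|\psi\|_{V\otimes H}^2$. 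Composing the two bounds yields the desired operator norm estimate on $V\otimes H$.

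Finally I would observe that $\rho^{\otimes 2}$ preserves symmetry --- since $\rho^{\otimes 2}(v_i\otimes v_j)=\rho v_i\otimes\rho v_j$ and symmetrisation commutes with $\rho^{\otimes 2}$ --- so that $\rho^{\otimes 2}\varphi\in V\otimes_s H$ whenever $\varphi\in V\otimes_s H$, and the inequality on $V\otimes H$ restricts directly to the claimed bound with the $\|\cdummy\|_{V\otimes_s H}$ norms on both sides. I expect the only genuinely delicate point to be the bookkeeping forced by the asymmetric roles of the two tensor slots (one equipped with the $V$-norm, the other with the $H$-norm): one must check that the same operator $\rho$ is bounded on each slot with the relevant norm and that the basis expansions above, which are a priori only for finite tensors, pass to the completion by density. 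Neither of these is substantial, but they are what makes the two factors in the factorisation carry the correct $L(V,V)$ and $L(H,H)$ constants.
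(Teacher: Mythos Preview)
Your proof is correct and follows essentially the same route as the paper: both factor $\rho^{\otimes 2}=(\rho\otimes I)(I\otimes\rho)$ and bound each factor separately via an orthonormal basis expansion of the untouched slot. The paper's version is terser (it works throughout with a single orthonormal basis $(e_k)$ of $H$ and the identity $\|\psi\|_{V\otimes_s H}^2=\sum_k\|\psi(\cdummy,e_k)\|_V^2$), but the argument is the same.
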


\begin{proof}
  \begin{align*}
    \| \rho^{\otimes 2} \varphi \|_{V \otimes_s H}^2 & =  \sum_k \|
    (\rho^{\otimes 2} \varphi) (\cdummy, e_k) \|_V^2\\
    & \leqslant  \| \rho \|_{L (V, V)}^2 \sum_k \| ((\tmop{id} \otimes \rho)
    \varphi) (\cdummy, e_k) \|_V^2\\
    & \leqslant  \| \rho \|_{L (V, V)}^2 \| \rho \|_{L (H, H)}^2 \sum_k \|
    \varphi (\cdummy, e_k) \|_V^2\\
    & =  \| \varphi \|_{V \otimes_s H}^2 . \qedhere
  \end{align*}
\end{proof}

{\color{red}
\begin{lemma}\label{lem:core}
    If $S$ is a core for $A$, then $\mathcal C$ is a core for $\mathcal L_0$.
\end{lemma}

\begin{proof}
  Let $F \in \mathcal{D} (\mathcal{L}_0)$ and let $(T_t)_{t \geqslant 0}$ be
  the semigroup generated by $\mathcal{L}_0$. The construction in Section~2 of
  \cite{Shigekawa1992} gives for $G (\eta) = \Psi (\eta (g)) \in
  \mathcal{C}$ the Mehler formula
  \begin{equation}\label{eq:Mehler}
    T_t G (\eta) = \int G \left( e^{- t A} \eta + \sqrt{1 - e^{- 2 t A}}
    \zeta \right) \mu (\mathd \zeta) \assign \int \Psi \left( \eta (e^{- t A} g) + \zeta \left( \sqrt{1 -
    e^{- 2 t A}} g \right) \right) \mu (\mathd \zeta),
  \end{equation}
  where $(e^{- t A})_{t \geqslant 0}$ is the semigroup generated by $A$. In
  the next steps, we build approximations in $\mathcal{C}$ that converge to
  $F$ in graph norm.
  \begin{enumerate}
    \item Approximation of $F$ by $A_t F$: Consider $A_t F \assign \frac{1}{t}
    \int_0^t T_s F \mathd s$, which converges to $F$ in graph norm as $t
    \rightarrow 0$: Indeed, $A_t F \rightarrow F$ in $L^2 (\mu)$ by strong
    continuity of $(T_t)_{t \ge 0}$  and
    \[ \| \mathcal{L}_0 A_t F -\mathcal{L}_0 F \| = \left\| \frac{1}{t} (T_t F
       - F) -\mathcal{L}_0 F \right\| \rightarrow 0. \]
    
    \item Approximation of $A_t F$ by $A_t F^N \in A_t \mathcal{C}$: Since
    $\mathcal{C}$ is dense in $\mathcal{H}^1_0$ by definition, we can find
    $(F^N)_N \subset \mathcal{C}$ with $\| F^N - F \|_{\mathcal{H}^1_0}
    \rightarrow 0$. We claim that $(A_t F^N)$ converges to $A_t F$ in graph
    norm as $N\to\infty$, for all $t > 0$. Indeed, $\| A_t F^N - A_t F \|_{\mathcal{H}^0_0}
    \rightarrow 0$ for all $t > 0$ because $A_t$ is a contraction, and also
    \[ \| \mathcal{L}_0 A_t F^N -\mathcal{L}_0 A_t F \| = \left\| \frac{1}{t}
       (T_t F^N - F^N) - \frac{1}{t} (T_t F - F) \right\| \rightarrow 0. \]
    
    \item Approximation of $A_t F^N$ by $A^{(m)}_t F^N \in \mathcal{C} (D
    (A))$: For $t \geqslant 0$ and $G \in \mathcal{C}$ of the form $G(\eta)=\Psi(\eta(g))$ the Mehler formula shows that $T_t G(\eta) = \Phi(\eta(e^{- t A} g))$ with the polynomial
    \[
        \Phi(x) = \int \Psi \left( x + \zeta \left( \sqrt{1 -
    e^{- 2 t A}} g \right) \right) \mu (\mathd \zeta).
    \]
    
    Moreover, $e^{- t A} S \subset e^{- t A} D (A) \subset D (A)$, where $D
    (A)$ is the domain of $A$. Therefore,
    \[ T_t \mathcal{C} \subset \mathcal{C} (\mathcal{D} (A)) \assign \left\{ G
       \in L^2 (\Omega) : G (\eta) = \Phi (\eta (f)), f \in \mathcal{D} (A)^m,
       \Phi : \mathbb{R}^m \rightarrow \mathbb{R} \text{ polynomial}, m \in
       \mathbb{N} \right\} . \]
    Now consider the Riemann sum approximation of $A_t$:
    \[ A^{(m)}_t \assign \frac{1}{m} \sum_{k = 0}^{m - 1} T_{\frac{k t}{m}} .
    \]
    Then $A^{(m)}_t F^N \in \mathcal{C} (D (A))$ for all $m, N \in
    \mathbb{N}$, and we claim that $A^{(m)}_t F^N \rightarrow A_t F^N$ in
    graph norm:
    \[ \|A^{(m)}_t F^N - A_t F^N \| \leqslant \frac{1}{t} \sum_{k = 0}^{m - 1}
       \int_{\frac{k t}{m}}^{\frac{(k + 1) t}{m}} \left\| \left( T_{\frac{k
       t}{m}} - T_s \right) F^N \right\| \mathd s \rightarrow 0, \qquad m
       \rightarrow \infty, \]
    and since $F^N \in \mathcal{D} (\mathcal{L}_0)$ also
    \[ \|\mathcal{L}_0 A^{(m)}_t F^N -\mathcal{L}_0 A_t F^N \| = \|A^{(m)}_t
       \mathcal{L}_0 F^N - A_t \mathcal{L}_0 F^N \| \rightarrow 0, \qquad m
       \rightarrow \infty . \]
    \item Approximation of $A^{(m)}_t F^N$ by $\mathcal{C}$: Since $A^{(m)}_t
    F^N \in \mathcal{C} (\mathcal{D} (A))$, there exist $k \in \mathbb{N}$, a
    polynomial $\Psi : \mathbb{R}^k \rightarrow \mathbb{R}$, and $g \in D
    (A)^k$ such that $A^{(m)}_t F^N (\eta) = \Psi (\eta (g))$. Since $S$ is a
    core for $A$, there exist $(g^i)_{i \in \mathbb{N}} \subset S^k$ with $g^i
    \rightarrow g$ and $A g^i \rightarrow A g$ in $H$. Then $(\Psi (\eta
    (g^i)))_i \in \mathcal{C}$ converges in graph norm to $\Psi (\eta (g)) =
    A^{(m)}_t F^N (\eta)$ as $i\to \infty$: The converge in $L^2 (\mu)$ is clear, and
    \begin{align*}
      \mathcal{L}_0 \Psi (\eta (g^i)) & = \sum_{\ell = 1}^k \partial_{\ell}
      \Psi (\eta (g^i)) (\eta (- A g^i_{\ell})) + \sum_{\ell_1, \ell_2 = 1}^k
      \partial_{\ell_1, \ell_2} \Psi (\eta (g^i)) \langle - A g^i_{\ell_1},
      g^i_{\ell_2} \rangle\\
      & \rightarrow \sum_{\ell = 1}^k \partial_{\ell} \Psi (\eta (g)) (\eta
      (- A g_{\ell})) + \sum_{\ell_1, \ell_2 = 1}^k \partial_{\ell_1, \ell_2}
      \Psi (\eta (g)) \langle - A g_{\ell_1}, g_{\ell_2} \rangle
      =\mathcal{L}_0 A^{(m)}_t F^N (\eta).
    \end{align*}
  \end{enumerate}
  The combination of Steps 1.-4. shows that $F$ can be approximated by elements
  of $\mathcal{C}$ in graph norm, so $\mathcal{C}$ is a core for
  $\mathcal{D} (\mathcal{L}_0)$.
\end{proof}
}

\begin{lemma}[Bounds for $\rho_N$ of Example~\ref{ex:Dirichlet}]\label{lem:rhoN-bounds}
  Let $0 \leqslant \alpha \leqslant \beta \leqslant 1$ and let $D$ and $\rho_N$ be as in Example~\ref{ex:Dirichlet}. Then
  \begin{align*}
    \| \rho_N v \|_{H^{\beta}} & \lesssim N^{\beta - \alpha} \| v
    \|_{H^{\alpha}}, \\
    \| \rho_N v - v \|_{H^{\alpha}} & \lesssim N^{- (\beta - \alpha)} \| v
    \|_{H^{\beta}},
  \end{align*}
  and the same bounds also hold if we replace $H^\alpha$ and $H^\beta$ by their homogeneous counterparts $\dot{H}^\alpha$ and $\dot{H}^\beta$.
\end{lemma}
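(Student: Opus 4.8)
The plan is to prove the two families of bounds only at the endpoints $\alpha,\beta\in\{0,1\}$ and then fill in the whole range $0\le\alpha\le\beta\le1$ by real interpolation of the Slobodeckij scale $H^\gamma(D)=B^\gamma_{2,2}(D)$, $\gamma\in[0,1]$, and its homogeneous analogue. Concretely, for the growth bound I would establish the three operator estimates $\|\rho_N\|_{L(L^2,L^2)}\lesssim1$, $\|\rho_N\|_{L(H^1,H^1)}\lesssim1$ and $\|\rho_N\|_{L(L^2,H^1)}\lesssim N$; for the approximation bound I additionally need $\|\mathrm{id}-\rho_N\|_{L(H^1,L^2)}\lesssim N^{-1}$, while the uniform boundedness of $\mathrm{id}-\rho_N$ on $L^2$ and $H^1$ is then immediate from the first two estimates and the triangle inequality.

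The endpoint estimates are all elementary. The $L^2$-bound $\|\rho_N v\|_{L^2}\lesssim\|v\|_{L^2}$ follows from Jensen's inequality, since each $\rho_N v(x)$ is an average of $v$ over $I_x^N$, together with Fubini and the fact that, for fixed $y$, the set of $x$ with $y\in I_x^N$ has Lebesgue measure $\lesssim N^{-1}$. For the derivative estimates I use that, away from the switch locus of $I_x^N$, $\partial_x\rho_N v(x)=\pm N\bigl(v(x\pm1/N)-v(x)\bigr)$ is a rescaled finite difference: bounding it in $L^2$ by $2N\|v\|_{L^2}$ gives the $L^2\to H^1$ bound with the factor $N$, whereas the sharper estimate $\|v(\cdummy+1/N)-v\|_{L^2}\le N^{-1}\|\partial_x v\|_{L^2}$ for $v\in H^1$ gives the uniform $H^1\to H^1$ bound. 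Finally, writing $\rho_N v(x)-v(x)=N\int_{I_x^N}(v(y)-v(x))\,\mathd y$ and estimating $|v(y)-v(x)|\le\int|\partial_x v|$ over the interval, the Cauchy-Schwarz inequality and Fubini yield $\|\rho_N v-v\|_{L^2}\lesssim N^{-1}\|\partial_x v\|_{L^2}$.

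From here two successive interpolations produce the sharp powers of $N$. Interpolating $L^2\to L^2$ against $L^2\to H^1$ gives $\|\rho_N\|_{L(L^2,H^\beta)}\lesssim N^\beta$, and interpolating $L^2\to L^2$ against $H^1\to H^1$ gives $\|\rho_N\|_{L(H^\gamma,H^\gamma)}\lesssim1$ for all $\gamma$. Interpolating the source of the pair $\rho_N:H^\beta\to H^\beta$ (norm $\lesssim1$) and $\rho_N:L^2\to H^\beta$ (norm $\lesssim N^\beta$) at the parameter $\theta=(\beta-\alpha)/\beta$, for which $(H^\beta,L^2)_{\theta,2}=H^\alpha$, then yields exactly $\|\rho_N\|_{L(H^\alpha,H^\beta)}\lesssim N^{\beta-\alpha}$. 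The approximation bound follows by the identical bookkeeping applied to $\mathrm{id}-\rho_N$, using $\|\mathrm{id}-\rho_N\|_{L(H^\beta,L^2)}\lesssim N^{-\beta}$ and $\|\mathrm{id}-\rho_N\|_{L(H^\beta,H^\beta)}\lesssim1$. The homogeneous statements are obtained in the same way, replacing each inhomogeneous space by its homogeneous seminorm (with $\dot H^0=L^2$) and invoking homogeneous real interpolation.

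The main obstacle is the boundary switch in the definition of $I_x^N$. For $D=(0,\infty)$ the condition $x+2/N\in D$ is automatic, so $\rho_N$ is a clean forward average, the finite-difference formula for $\partial_x\rho_N v$ holds on all of $D$, and the estimates above go through verbatim. For $D=(0,1)$ the interval flips from forward to backward near the right endpoint, and the genuine work is to control $\rho_N v$ across this switch locus and to verify that the derivative bound $\|\rho_N\|_{L(L^2,H^1)}\lesssim N$ survives it, rather than acquiring an interior contribution of worse order; once the switch region is shown to contribute at most the same order as the bulk, uniformity of all constants in $N$ is automatic. A secondary point requiring care is the identification of the interpolation spaces of the Slobodeckij scale on $D$: using the full spaces $H^\gamma(D)$ rather than the $H^\gamma_0$ trace spaces avoids the Lions-Magenes anomaly at $\gamma=1/2$, so that $(L^2,H^1)_{\theta,2}=H^\theta$ holds throughout $\theta\in(0,1)$.
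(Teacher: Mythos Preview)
Your proposal matches the paper's proof: endpoint estimates at $\alpha,\beta\in\{0,1\}$ by the same arguments (Jensen plus Fubini for $L^2\to L^2$, finite differences for the $H^1$ bounds, the integral representation of $\rho_N v-v$ for $H^1\to L^2$), followed by exactly the two-stage interpolation you describe with the same parameter $\theta=(\beta-\alpha)/\beta$. The paper's one streamlining is to record the commutation $\partial_x\rho_N v=\rho_N\partial_x v$, from which both $\|\rho_N\|_{L(L^2,H^1)}\lesssim N$ and $\|\rho_N\|_{L(H^1,H^1)}\lesssim1$ follow immediately from the already-established $L^2$ bound; the switch locus you flag is not treated separately there.
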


\begin{remark}
    Before going to the proof, we stress that the specific form of $I^N_x$ is never used in the arguments, and we only need that $y \in I^N_x$ forces $|\textcolor{red}{p_N(x)}-y| \leqslant 1/N$ and that $|I^N_x \cap D| = 1$ for all $x\in D$.
\end{remark}

\begin{proof}
  We use interpolation arguments to reduce to the special cases $\alpha, \beta
  \in \{ 0, 1 \}$. We focus on the inhomogeneous estimates, the homogeneous estimate follows from the same arguments, since along the way we show $\|\rho_N v\|_{\dot{H}^1} \lesssim \|v\|_{\dot{H}^1}$ and $\|\rho_N v - v\|_{L^2} \lesssim \|v\|_{\dot{H}^1}$.
  \begin{enumeratenumeric}
    \item Estimates for $\| \rho_N v \|_{H^{\beta}}^2$: For $\alpha = \beta =
    0$ we use that $N \tmmathbf{1}_{I^N_x}$ defines a probability measure for
    each $x \in D$ and we have $y \in I^N_x$ only if $|\textcolor{red}{p_N(x)} - y | <
    \frac{1}{N}$, and thus by Jensen's inequality and Fubini's theorem
    \textcolor{red}{
    \begin{align}
        \| \rho_N v \|_{L^2}^2 & = \int_D \left( \int_D N \tmmathbf{1}_{(p_N(x),p_N(x)+1/N)}(y) v (y) \mathd y \right)^2 \mathd x\nonumber \\
        & \leqslant \int_D \int_D N \tmmathbf{1}_{(p_N(x),p_N(x)+1/N)}(y) |v (y)|^2 \mathd y \mathd x\nonumber \\
        & \leqslant N\int_D \int_{p_N(D)}\tmmathbf{1}_{(x,x+1/N)}(y)\frac{1}{p_N'(x)}       \mathd x| v (y) |^2\mathd y\nonumber\\
        &\lesssim \| v\|_{L^2}^2,
    \end{align}
    where we performed a change of variables in the third step and then used \eqref{pN_properties}.
    }
 \textcolor{red}{Using an implicit smooth approximation of $v$ and $\iota_N$,    
    the convolution identity \eqref{eq: rhoN as convolution}, a change of variables and Young's convolution inequality gives
    \begin{align}\label{eq: a=0 b=1}
        \|\partial_x \rho_Nv\|_{L^2}^2&=\int_D|(v\ast\partial_x\iota_{N})(p_N(x))p_N'(x)|^2\,\mathd x\nonumber\\
        &\lesssim N^2 \int_{D}|v(p_N(x)+1/N)-v(p_N(x))|^2\,\mathd x\nonumber\\
        &\lesssim N^2 \|v\|_{L^2}^2\,\mathd x,
    \end{align}
    and so
    \begin{equation}\label{eq: bound for H1 norm of rhoN v by l2 norm of v}
         \| \partial_x \rho_N v \|_{L^2}^2 \lesssim N^2 \| v \|_{L^2}^2 \qquad
       \Rightarrow \qquad \| \rho_N v \|_{H^1} \lesssim N \| v \|_{L^2} .
     \end{equation} 
}  
    \textcolor{red}{By repeating the same computation as in \eqref{eq: a=0 b=1} but putting the derivative on $v$, it follows that 
     \[ \| \rho_N v \|_{H^1} \lesssim \| v \|_{H^1} . \]
    }
    Let $X_0 = X_1 = L^2$ and $Y_0 = L^2$, $Y_1 = H^1$, so that $\| \rho_N
    \|_{L (X_0, Y_0)} \lesssim 1$ and $\| \rho_N \|_{L (X_1, Y_1)} \lesssim
    N$. For $\theta = \beta$ we have $[X_0, X_1]_{\theta} = L^2$ and $[Y_0,
    Y_1]_{\theta} = H^{\beta}$, so the interpolation theorem
    yields for some $C > 0$ that does not depend on
    $N$
    \begin{equation}
      \| \rho_N \|_{L (L^2, H^{\beta})} \leqslant C^{1 - \theta} (C
      N)^{\theta} \simeq N^{\beta} . \label{eq:interpolation-L2-Hb}
    \end{equation}
    Interpolation with $X_0 = Y_0 = L^2$ and $X_1 = Y_1 = H^1$ gives $[X_0,
    X_1]_{\theta} = [Y_0, Y_1]_{\theta} = H^{\beta}$, and thus
    \begin{equation}
      \| \rho_N \|_{L (H^{\beta}, H^{\beta})} \lesssim 1.
      \label{eq:interpolation-Hb-Hb}
    \end{equation}
    Now we interpolate~\eqref{eq:interpolation-L2-Hb}
    and~\eqref{eq:interpolation-Hb-Hb} with $\theta = \frac{\beta -
    \alpha}{\beta}$ and $X_0 = L^2$, $Y_0 = H^{\beta}$ and $X_1 = Y_1 =
    H^{\beta}$, so that $[X_0, X_1]_{\theta} = H^{\beta (1 - \theta)} =
    H^{\alpha}$ and $[Y_0, Y_1]_{\theta} = H^{\beta}$ and
    \[ \| \rho_N \|_{L (H^{\beta}, H^{\beta})} \lesssim N^{\beta \theta} =
       N^{\beta - \alpha} . \]
    \item Estimates for $\| \rho_N v - v \|_{H^{\alpha}}^2$: For $\alpha =
    \beta = 0$ and $\alpha = \beta = 1$ we simply use the triangle inequality.
    For $\alpha = 0$ and $\beta = 1$ we bound, writing $(x, y) = (y, x)$ if $y
    < x$,

    \textcolor{red}{
    \begin{align*}
      \| \rho_N v - v \|_{L^2}^2 & =  \int_D \left( \int_D N
      \tmmathbf{1}_{I^N_x} (y) (v (y) - v (x)) \mathd y \right)^2 \mathd x\\
      & \leqslant \int_D \int_D N \tmmathbf{1}_{I^N_x} (y) \left( \int_x^y
      \partial_z v (z) \mathd z \right)^2 \mathd y \mathd x\\
      & \leqslant  \int_D \int_D \int_D N | y - x |\tmmathbf{1}_{I^N_x} (y)
      \tmmathbf{1}_{(x, y)} (z) | \partial_z v (z) |^2 \mathd z \mathd y
      \mathd x\\
      & \leqslant  \int_D \int_D \int_D N | y - x | \tmmathbf{1}_{| p_N(x) - y |
      \leqslant \frac{1}{N}} \tmmathbf{1}_{| z - p_N(x) | \leqslant \frac{2}{N}} |
      \partial_z v (z) |^2 \mathd z \mathd y \mathd x\\
      & \lesssim  N^{- 2} \| \partial_z v \|_{L^2}^2,
    \end{align*}
    where in the penultimate and last step we used that $p_N(x)\leqslant x \leqslant p_N(x)+2/N$, which implies that $|p_N(x)-x|\leqslant 2/N$.
    }
    
    Thus, $\| \rho_N v - v \|_{L^2} \lesssim N^{- 1} \| v \|_{H^1}$.
    Interpolation with $X_0 = Y_0 = L^2$ and $X_1 = H^1$, $Y_1 = L^2$ and $1 -
    \theta = \beta$ yields
    \begin{equation}
      \| \rho_N - \tmop{id} \|_{L (H^{\beta}, L^2)} \lesssim N^{- \beta} .
      \label{eq:interpolation-diff-Hb-L2}
    \end{equation}
    Taking instead $Y_1 = H^1$, we get
    \begin{equation}
      \| \rho_N - \tmop{id} \|_{L (H^{\beta}, H^{\beta})} \lesssim 1.
      \label{eq:interpolation-diff-Hb-Hb}
    \end{equation}
    Interpolating~\eqref{eq:interpolation-diff-Hb-L2}
    and~\eqref{eq:interpolation-diff-Hb-Hb} with $\theta =
    \frac{\alpha}{\beta}$, we finally obtain
    \[ \| \rho_N - \tmop{id} \|_{L (H^{\beta}, H^{\alpha})} \lesssim N^{-
       \beta (1 - \theta)} = N^{- (\beta - \alpha)} . \qedhere \]
  \end{enumeratenumeric}
\end{proof}

\begin{lemma}[Trace operator on Sobolev spaces]
  \label{lem:trace}Let $D \subset \mathbb{R}^d$ be a Lipschitz domain and let
  $s > \frac{d}{2}$. Then the trace
  \[ T \varphi (x) \assign \varphi (x, x), \qquad x \in D, \]
  defines a bounded linear operator $T : H^s (D \times D) \rightarrow H^{s - d
  / 2} (D)$.
\end{lemma}

\begin{proof}
  According to \cite[Ch.~4.1]{Triebel2006} we have
  \[ \| T \varphi \|_{H^{s - d / 2} (D)} \simeq \inf \{ \| \psi \|_{H^{s - d /
     2} (\mathbb{R}^d)} : \psi |_D = T \varphi \}, \]
  and similarly for $\| \varphi \|_{H^s (D \times D)}$. If $\psi \in H^s
  (\mathbb{R}^{2 d})$ is such that $\psi |_{D \times D} = \varphi$, then $T
  \psi |_D = T \varphi$ and thus
  \[ \| T \varphi \|_{H^{s - d / 2} (D)} \lesssim \inf \{ \| T \psi \|_{H^{s -
     d / 2} (\mathbb{R}^d)} : \psi |_{D \times D} = \varphi \} . \]
  But on $\mathbb{R}^d$ we can access Fourier methods to compute $\mathcal{F}
  (T \psi) (z) = \int_{\mathbb{R}^d} \mathcal{F} \psi (z - z', z') \mathd z'$
  and then
  \begin{align*}
    & \| T \psi \|_{H^{s - d / 2} (\mathbb{R}^d)}^2\\
    & = \int_{\mathbb{R}^d} (1 + | z |^2)^{s - \frac{d}{2}} | \mathcal{F} (T
    \psi) (z) |^2 \mathd z\\
    & \leqslant \int_{\mathbb{R}^d} (1 + | z |^2)^{s - \frac{d}{2}} \left(
    \int_{\mathbb{R}^d} (1 + | z - z' |^2 + | z' |^2)^{- s} \mathd z' \right)
    \left( \int_{\mathbb{R}^d} (1 + | z - z' |^2 + | z' |^2)^s | \mathcal{F}
    \psi (z - z', z') |^2 \mathd z' \right) \mathd z\\
    & \lesssim \int_{\mathbb{R}^d} (1 + | z |^2)^{s - \frac{d}{2}} (1 + | z
    |^2)^{\frac{d}{2} - s} \int_{\mathbb{R}^d} (1 + | z - z' |^2 + | z' |^2)^s
    | \mathcal{F} \psi (z - z', z') |^2 \mathd z' \mathd z\\
    & = \| \psi \|_{H^s (\mathbb{R}^d \times \mathbb{R}^d)},
  \end{align*}
  using $s > \frac{d}{2}$ in the fourth line to bound the integral
  $\int_{\mathbb{R}^d} (1 + | z - z' |^2 + | z' |^2)^{- s} \mathd z'$. Thus,
  we have shown that
  \[ \| T \varphi \|_{H^{s - d / 2} (D)} \lesssim \inf\{ \| \psi \|_{H^s
     (\mathbb{R}^d \times \mathbb{R}^d)} : \psi |_{D \times D} = \varphi \} =
     \| \varphi \|_{H^s (D \times D)} . \qedhere\]
\end{proof}

\begin{lemma}[Regularity of the derivative on Neumann Sobolev spaces]
  \label{lem:Neumann-derivative}Consider for $\theta \in \mathbb{R}$ the
  Neumann Sobolev spaces
  \[ H^{\theta} = \left\{ h = \sum_{k = 0}^{\infty} \cos (\pi k \cdummy)
     \hat{h} (k) : \| f \|_{H^{\theta}}^2 = \sum_{k = 0}^{\infty} (1 + | k
     |^2)^{\theta} | \hat{h} (k) |^2 < \infty \right\}, \]
  and for $\theta < 0$ we interpret $h \in H^{\theta}$ as an element of the
  dual space of $H^{- \theta}$. Let $\theta \in \left( \frac{1}{2},
  \frac{5}{2} \right)$ and $\delta > \frac{3}{2}$. Then the space derivative
  $\partial_x$ is a bounded linear operator from $H^{\theta}$ to $H^{\theta -
  \delta}$.
\end{lemma}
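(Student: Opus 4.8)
The plan is to reduce the claim to a Hilbert--Schmidt estimate for the matrix of $\partial_x$ in the cosine basis. Since the norms on $H^\theta$ and $H^{\theta-\delta}$ are defined through the coefficients $\hat h(k)$, it suffices to argue on smooth Neumann functions (finite cosine sums) and then extend by density. For such $h=\sum_k\hat h(k)\cos(\pi k\,\cdot)$ the derivative $\partial_x h=-\sum_k\pi k\,\hat h(k)\sin(\pi k\,\cdot)$ is genuinely smooth, and I would expand it back in the cosine basis. Writing $\hat g(j)$ for the cosine coefficients of $\partial_x h$ and $M_{jk}$ for the matrix sending $(\hat h(k))_k$ to $(\hat g(j))_j$, boundedness of $\partial_x:H^\theta\to H^{\theta-\delta}$ is equivalent to $\ell^2$-boundedness of the weighted matrix
\[ K_{jk}=(1+j^2)^{(\theta-\delta)/2}\,M_{jk}\,(1+k^2)^{-\theta/2}, \]
and for this it is enough to show $K$ is Hilbert--Schmidt, because then $\|\partial_x h\|_{H^{\theta-\delta}}\le\|K\|_{\mathrm{HS}}\|h\|_{H^\theta}$.

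Second, I would compute $M_{jk}$ explicitly, exactly in the spirit of the Fourier computation in Lemma~\ref{lem:trace}. The product-to-sum formula gives $\int_0^1\sin(\pi kx)\cos(\pi jx)\,\mathrm dx=\tfrac{2k}{\pi(k^2-j^2)}$ when $k+j$ is odd and $0$ otherwise; in particular $k=j$ never occurs on the support, so there is no singularity. Hence, up to bounded constants (and a separate, simpler $j=0$ row contributing $\lesssim\sum_{k\text{ odd}}k^{-2\theta}$), $M_{jk}\asymp\frac{k^2}{k^2-j^2}\mathbf 1_{k+j\text{ odd}}$ and
\[ |K_{jk}|^2\asymp\mathbf 1_{k+j\text{ odd}}\,\frac{k^4}{(k^2-j^2)^2}\,(1+j^2)^{-(\delta-\theta)}(1+k^2)^{-\theta}. \]
The parity constraint $k+j$ odd forces $|k-j|\ge 1$, which keeps $|k^2-j^2|=|k-j|\,|k+j|$ away from $0$.

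Third, I would estimate $\sum_{j,k}|K_{jk}|^2$ by splitting the $j$-sum, for fixed $k$, into the three dyadic regions $j\le k/2$, $k/2<j<2k$, and $j\ge 2k$. The decisive contribution is the near-diagonal region: writing $j=k\pm m$ with $m\ge 1$ odd, one has $|k^2-j^2|\asymp km$, so $|K_{jk}|^2\asymp m^{-2}k^{2-2\delta}$, and after summing the convergent series $\sum_{m\text{ odd}}m^{-2}$ this region contributes $\asymp\sum_k k^{2-2\delta}$, which is finite \emph{exactly} because $\delta>3/2$; this is the origin of the $3/2$ threshold, namely the square-summability of the near-diagonal entries $K_{k,k\pm1}\asymp k^{1-\delta}$. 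The low region $j\le k/2$ contributes a finite sum dominated by $\sum_k k^{-2\theta}$ or $\sum_k k^{1-2\delta}$ (according as $\delta-\theta\gtrless\tfrac12$), both convergent since $\theta>1/2$ and $\delta>3/2$; and the high region $j\ge 2k$ contributes $\asymp\sum_k k^{1-2\delta}$, where the inner sum $\sum_{j\ge 2k}j^{-4-2(\delta-\theta)}$ converges because $\theta<5/2$ together with $\delta>3/2$ yields $\theta<\delta+\tfrac32$. Collecting the three regions shows $K$ is Hilbert--Schmidt and completes the proof.

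I expect the main obstacle to be the bookkeeping of the near-diagonal region and the precise role of the parity constraint: one must verify that the loss of $\tfrac32$ (rather than the naive $1$) derivative comes entirely from the entries with $|k-j|=O(1)$, whose size $\asymp k$ reflects the fact that $\sin(\pi k\,\cdot)$ violates the Neumann boundary condition and hence has slowly decaying cosine coefficients. A weighted Schur test with weight $(1+k^2)^{\sigma/2}$ for a suitable $\sigma$ would also work, but the Hilbert--Schmidt estimate is cleaner and already covers the full range $\theta\in(\tfrac12,\tfrac52)$, $\delta>\tfrac32$.
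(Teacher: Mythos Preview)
Your argument is correct and is essentially the same as the paper's: both compute the matrix of $\partial_x$ in the cosine basis via the explicit integral $\int_0^1\cos(\pi j x)\sin(\pi k x)\,\mathrm dx$, and both control the $H^\theta\to H^{\theta-\delta}$ norm by what amounts to the Hilbert--Schmidt norm of the weighted matrix $K_{jk}$. The only differences are organizational: the paper fixes the output index, applies Cauchy--Schwarz to obtain the pointwise bound $|\widehat{\partial_x h}(k)|\lesssim\|h\|_{H^\theta}(1+k^2)^{(1-\theta)/2}$ from the row sum (splitting simply into $\ell<k$ and $\ell>k$), and then sums against $(1+k^2)^{\theta-\delta}$; you name the estimate Hilbert--Schmidt from the start, fix the input index, and split the output sum into three dyadic regions. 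Your explicit identification of the near-diagonal entries $K_{k,k\pm1}\asymp k^{1-\delta}$ as the source of the $\delta>3/2$ threshold is a nice addition that the paper leaves implicit.
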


\begin{proof}
  We consider $h$ of the form $h = \sum_{k = 0}^N \cos (\pi k \cdummy) \hat{h}
  (k)$ and derive an estimate that is uniform in $N$. The Fourier type
  coefficients of $\partial_x h$ are
  \begin{align*}
    | \widehat{\partial_x h} (k) | & =  \left| \int_0^1 \cos (\pi k x)
    \sum_{\ell = 0}^N (- \pi \ell) \sin (\pi \ell x) \hat{h} (\ell) \mathd x
    \right|\\
    & =  \left| \sum_{\ell = 0}^N (- \pi \ell) \hat{h} (\ell) \int_0^1 \cos
    (\pi k x) \sin (\pi \ell x) \mathd x \right|\\
    & =  \left| \sum_{\ell = 0}^N (- \pi \ell) \hat{h} (\ell)
    \tmmathbf{1}_{\ell \neq k} \frac{\ell (1 - (- 1)^{k + \ell})}{\pi (\ell^2
    - k^2)} \right|\\
    & \lesssim  \left( \sum_{\ell = 0}^N (1 + | \ell |^2)^{\theta} | \hat{h}
    (\ell) |^2 \right)^{1 / 2} \left( \sum_{\ell > 0, \ell \neq k}
    \frac{\ell^{4 - 2 \theta}}{(\ell - k)^2 (\ell + k)^2} \right)^{1 / 2}\\
    & \lesssim  \| h \|_{H^{\theta}} \left( \sum_{\ell < k} \frac{\ell^{4 -
    2 \theta}}{(\ell - k)^2 k^2} + \sum_{\ell > k} \frac{\ell^{2 - 2
    \theta}}{(\ell - k)^2} \right)^{1 / 2}\\
    & \lesssim  \| h \|_{H^{\theta}} ((1 + | k |^2)^{1 - \theta})^{1 / 2},
  \end{align*}
  where the last step follows by comparing the sums with integrals and using
  that $\theta \in \left( \frac{1}{2}, \frac{5}{2} \right)$. Therefore,
  \begin{align*}
    \| \partial_x h \|_{H^{\theta - \delta}}^2 & =  \sum_{k = 0}^{\infty} (1
    + | k |^2)^{\theta - \delta} | \widehat{\partial_x h} (k) |^2\\
    & \lesssim  \| h \|_{H^{\theta}}^2 \sum_{k = 0}^{\infty} (1 + | k
    |^2)^{\theta - \delta} (1 + | k |^2)^{1 - \theta}\\
    & =  \| h \|_{H^{\theta}}^2 \sum_{k = 0}^{\infty} (1 + | k |^2)^{1 -
    \delta},
  \end{align*}
  and the series on the right hand side is finite if $\delta > \frac{3}{2}$.
\end{proof}

\paragraph{Acknowledgements}
The authors are grateful to Xiaohao Ji, Pedro Cardoso and Patr\'{i}cia Gon{\c{c}}alves for fruitful discussions. A special thanks goes to Zhilin Yang for pointing out errors related to the assumptions on and construction of $\rho_N$.\\
The first two authors are grateful for funding by DFG through EXC 2046, Berlin Mathematical School, and through IRTG 2544 “Stochastic Analysis in Interaction”. The first author expresses his graditude for funding from the UKRI Future Leaders Fellowship MR/W008246/1, ``Large-scale universal behaviour of
Random Interfaces and Stochastic Operators".
The second author also acknowledges support from DFG CRC/TRR 388 ``Rough Analysis, Stochastic Dynamics and Related Fields", Project A01.
The final author was supported by the EPSRC
Centre for Doctoral Training in Mathematics of Random Systems: Analysis, Modelling and Simulation
(EP/S023925/1), and now acknowledges funding from the SDAIM project ANR-22-CE40-0015 funded by
the French National Research Agency (ANR).

\bibliographystyle{amsalpha}
\bibliography{all}

\end{document}